\newtheorem{lemma}{Lemma}[section]
\newtheorem{thm}[lemma]{Theorem}
\newtheorem{cor}[lemma]{Corollary}
\newtheorem{defn}[lemma]{Definition}
\newtheorem{thm*}{Theorem}
\theoremstyle{definition}
\theoremstyle{remark}
\numberwithin{equation}{section}
\renewcommand{\bar}{\overline}
\newcommand{\T}{\mathbf{T}}
\renewcommand{\S}{\mathbf{S}}
\newcommand{\Z}{\mathbb{Z}}
\renewcommand{\v}{\mathbf{v}}
\newcommand{\w}{\mathbf{w}}
\renewcommand{\P}{\mathcal{P}}
\newcommand{\so}{\Rightarrow}
\newcommand{\os}{\Leftarrow}
\renewcommand{\iff}{\Leftrightarrow}
\newcommand{\N}{\mathbb{N}}
\newcommand{\x}{\mathbf{x}}
\newcommand{\g}{\mathbf{g}}
\newcommand{\e}{\mathbf{e}}
\newcommand{\0}{\mathbf{0}}
\renewcommand{\r}{\mathbf{r}}
\newcommand{\ep}{\epsilon}
\renewcommand{\i}{\mathbf{i}}
\newcommand{\actson}{\curvearrowright}
\newcommand{\gothG}{\mathfrak{G}}
\newcommand{\gothH}{\mathfrak{H}}
\newcommand{\F}{\mathcal{F}}
\newcommand{\FF}{\mathcal{FF}}
\begin{document}

\title[Finite odometer factors of rank one $\Z^d$-actions]{Finite odometer factors of rank one $\Z^d$-actions}

\author[A.S.A. Johnson]{Aimee S. A. Johnson$^1$} 

\thanks{$^1$Corresponding author:  Department of Mathematics and Statistics, Swarthmore College, 500 College Ave.
Swarthmore, PA 19081; email:  \texttt{aimee@swarthmore.edu}}

\author[D. McClendon]{ and David M. McClendon$^2$}

\thanks{$^2$Department of Mathematics, Ferris State University, ASC 2021, Big Rapids, MI 49307; email:  \texttt{mcclend2@ferris.edu}}


\subjclass[2010]{Primary 37A15, Secondary 37A35, 37A05}

\date{June 15, 2023}

\maketitle

\begin{abstract}
In this paper, we give explicit conditions characterizing the F{\o}lner rank one $\Z^d$-actions that factor onto a finite odometer; those that factor onto an arbitrary, but specified $\Z^d$-odometer, and those that factor onto an unspecified $\Z^d$-odometer.  We also give explicit conditions describing the F{\o}lner rank one $\Z^d$-actions that are conjugate to a specific $\Z^d$-odometer, and those that are conjugate to some $\Z^d$-odometer.  These conditions are based on cutting and stacking procedures used to generate the action, and generalize results given in \cite{FGHSW} for rank one $\Z$-actions.
\end{abstract}

%
%

\section{Introduction}

In recent work, Foreman, Gao, Hill, Silva and Weiss \cite{FGHSW} gave a characterization of rank one transformations which factor onto an odometer (equivalently, they characterize which rank one transformations are not totally ergodic) in terms of parameters coming from a cutting and stacking construction used to describe the rank one transformation.  They used this characterization to describe the rank one transformations which are conjugate to a given odometer, and the rank one transformations that are conjugate to some odometer.  

The work in \cite{FGHSW} can be thought of as one step towards a more ambitious goal of classifying rank one actions up to conjugacy.  In particular, the conjugacy relation on rank one systems is known to be a Borel equivalence relation \cite{FRW}, and so there should be an explicit way of describing all rank one transformations that are conjugate to a specific rank one transformation.  Foreman et al. address exactly this question in the setting where the specific rank one transformation is an odometer.  

In this paper, we generalize the work in \cite{FGHSW} to F{\o}lner rank one actions of $\Z^d$ (though we believe our results hold for actions of groups beyond $\Z^d$).  We characterize, in Theorem \ref{finitefactorthm}, those actions which factor onto a specified finite $\Z^d$-odometer.  Using this result, we then characterize the F{\o}lner rank one $\Z^d$-actions that factor onto a specific odometer (Corollary \ref{rankoneodometerfactor}) and those that factor onto an arbitrary infinite odometer (Theorem \ref{someodometerfactor}).  Finally, we describe the F{\o}lner rank one $\Z^d$-actions that are
conjugate to a specific $\Z^d$-odometer (Theorem \ref{conj2odom}) and those that are conjugate to some $\Z^d$-odometer (Theorem \ref{conj2someodom}).  We also show, via an example in Theorem \ref{stackingnonex}, that the results of \cite{FGHSW} do not necessarily generalize to rank one $\Z^d$-actions when the tower shapes do not form a F{\o}lner sequence in $\Z^d$.  In Theorem \ref{examplenottotallyergodic}, we show that,
 unlike the situation for rank one $\Z$-actions, for F{\o}lner rank one $\Z^d$-actions total ergodicity does not coincide with the existence of a finite odometer factor.  Finally, in Theorem  \ref{examplesubactions} , we give an example of a F{\o}lner rank one $\Z^2$-action generated by transformations that each factor onto a free odometer but yield a $\Z^2$-action that does not itself factor onto a free odometer.

When $d=1$, our results can also be seen as a generalization of the results of \cite{FGHSW} to funny rank one $\Z$-actions whose tower shapes form a F{\o}lner sequence; this would include, for example, funny rank one $\Z$-actions constructed by Ferenczi \cite{Fe2} that are known not to be loosely Bernoulli and thus not of rank one.

\emph{Acknowledgment:}  The authors wish to acknowledge the Little School Dynamics research group of the American Institute of Mathematics, through which they were initially exposed to the work in \cite{FGHSW}.

%
%

\section{Rank one actions and odometers}


\subsection{Dynamical systems}

We begin with some standard definitions from measurable dynamics.  Let $G$ be a group and let $(X,\mathcal{X},\mu)$ be a Lebesgue probability space.  We say $(X,\mu,\T)$ is a \textbf{measure-preserving $G$-action}, and write $\T : G \actson (X,\mu)$, if for every $g \in G$ there is an invertible, measurable, measure-preserving transformation $\T^g: X \to X$, and these transformations satisfy $\T^{gh} = \T^g \circ \T^h$ for every $g, h \in G$ and also that $\T^{0}(x) = x$ for all $x \in X$, where $0$  denotes the identity element of $G$.

While the measure of a Lebesgue probability space is by definition non-atomic, in this paper we will also want to discuss systems with finite phase spaces.  With this in mind, if $X$ is a finite set, $\mathcal{X}$ is the power set of $X$, and $\mu$ is normalized counting measure on $(X,\mathcal{X})$, we will also decree any action of group $G$ on the finite set $X$ that preserves $\mu$ to be a \textbf{measure-preserving $G$-action}.

A measure-preserving $G$-action $(X,\T)$ is called \textbf{free} if, for almost every $x \in X$, $\T^g(x) = x$ implies $g = 0$.  If the only measurable sets invariant under every $\T^g$ have $\mu$-measure $0$ or $1$, then we say $(X,\mu,\T)$ is \textbf{ergodic}.  

We say that one measure-preserving $G$-action $(X,\mu,\T)$ \textbf{factors} onto another $(Y,\nu,\S)$ if there is a measurable function $\phi : X \to Y$ so that $\phi^*(\mu) = \nu$ and $\phi(\T^g(x)) = \S^g(\phi(x))$ for almost every $x \in X$ and every $g \in G$.  Any such $\phi$ is called a \textbf{factor map}.  Two actions $\T : G \actson (X,\mu)$ and $\S : G \actson (Y,\nu)$ are \textbf{conjugate} if there is a factor map from $(X,\T)$ to $(Y,\S)$ which is almost surely one-to-one.

In this paper, we concern ourselves with actions where $G = \Z^d$ for some $d$, but our results likely translate to actions of more general discrete amenable groups.


\subsection{Rank one systems}

Rank one actions form a rich class of measure-preserving actions that provide examples, and especially counterexamples, addressing many questions of measurable dynamics.  They have their roots in Chacon's ``geometric construction'' of examples of systems with particular mixing properties \cite{Cha1, Cha2} and in an example of a measure-preserving transformation with no square root due to Ornstein \cite{Orn}.  The name ``rank one'' was coined in \cite{ORW}, and many properties of rank one actions are well-known:  for instance, they are ergodic \cite{Fr, RS}, have simple spectrum \cite{Bax, RS} and are of zero entropy \cite{Bax, RS}. 

The study of rank one systems, as with the rest of ergodic theory, spread from actions generated by a single transformation to actions of more general groups.  Rank one $\Z^d$-actions were first considered by Johnson and {\c{S}}ahin in \cite{JS}, where the authors showed that any such action was loosely Bernoulli.  Their work built on the one-dimensional work of \cite{ORW} and the generalization of Feldman's $\bar{f}$-metric to $\Z^d$-actions found in \cite{H}.
 More recently, Danilenko has described rank one actions of more general groups using $(C,F)$-models \cite{Dan1, Dan2}, although these actions might better be described as generalizing Thouvenot's notion of a \emph{funny rank one} $\Z$-action (see \cite{Fe2}).

Today there are many different approaches to defining rank one systems which are ``mostly'' equivalent (see \cite{Fe, AFP} for details), including a constructive geometric definition coming from Chacon's work, a constructive symbolic definition \cite{Kal, dJR} and a non-constructive symbolic definition \cite{dJ}.  Our description of rank one systems is what is formally called the non-constructive geometric approach and informally referred to as \emph{cutting and stacking}.  This approach has its origin in Ornstein's example and is rooted in the notion of periodic approximation as studied in \cite{KS}.   The following definitions can be found in \cite{RS} but are repeated here for completion.
 
To begin, we define a \textbf{shape} to be a nonempty finite subset of $\Z^d$.  Then a sequence $\{F_n\}$ of shapes is called a \textbf{F{\o}lner sequence} for $\Z^d$ if for any $\v \in \Z^d$, 
\[
\lim_{n \to \infty} \frac{\#\left(F_n \, \triangle \, [F_n + \v]\right)}{\#(F_n)}  = 0.
\]
A shape can be associated to a measure-preserving $\Z^d$-action by partitioning the space into subsets, arranged via the shape in the following way:

\begin{defn}
Let $\T: \Z^d \actson (X,\mu)$ and let $R\subset \Z^d$ be a shape.  If there exists a measurable set $B$ such that $\mu(B)>0$ and $\T^{\v_1}(B) \cap \T^{\v_2}(B) = \emptyset$ for every $\v_1, \v_2 \in R$, then the $\T$-{\bf{tower of shape $R$}} is defined to be $\mathcal{T} = \{ \T^{\v}(B) : \v \in R\} \cup E$, where $E = \left( \cup_{\v\in R}\T^{\v}(B) \right)^C$.  The sets $\T^{\v}(B)$ are then the {\bf{levels}} of the $\T$- tower; the set $B$ is the {\bf{base}} of the $\T$-tower; and $E$ is called the \textbf{\emph{error set}} of $\mathcal{T}$.
\end{defn}

\noindent Note that we can assume $\0\in R$ by simply shifting the indices of $R$.  

If $\mathcal{T}$ is a $\T$-tower and $x \in X$, then we write $x \in \mathcal{T}$ to mean that $x$ is not in the error set of $\mathcal{T}$.

We are now ready to define what it means for a $\Z^d$-action to be rank one:

\begin{defn} 
\label{rankonedefn}
Let $\T: \Z^d \actson (X,\mu)$ and let $\{F_n\}$ be a sequence of shapes.  We say $\T$ is \textbf{\emph{rank one for $\{F_n\}$}} if there is a sequence $\{\mathcal{T}_n\}$ of $\T$-towers of shape $F_n$, so that given any $A \in \mathcal{X}$, there is a sequence of  $\mathcal{T}_n$-measurable sets $L_n$ (i.e. a union of levels of the tower $\mathcal{T}_n$) so that
\[\lim_{n \to \infty} \mu\left(L_n \, \triangle \, A\right) = 0.\]
We say $\T$ is \textbf{\emph{rank one}} if it is rank one for some sequence $\{F_n\}$ of shapes.
\end{defn}

In Definition \ref{rankonedefn}, we make no assumptions about the sequence of shapes.  The following definitions are more restrictive.

\begin{defn} A rank one $\Z^d$-action $\T: \Z^d \actson (X,\mu)$ is called $\{F_n\}$-\textbf{\emph{F{\o}lner rank one}}  if $\T$ is rank one for  $\{F_n\}$ where $\{F_n\}$ is a F{\o}lner sequence for $\Z^d$.  If $\{F_n\}$ is understood, we simply say $\T$ is \textbf{\emph{F{\o}lner rank one}}.
\end{defn}

\begin{defn} A rank one $\Z^d$-action $\T: \Z^d \actson (X,\mu)$ is called  \textbf{\emph{stacking rank one}} if $\T$ is rank one for  $\{F_n\}$ and the associated $\T$-towers $\mathcal{T}_n$ are such that every $\mathcal{T}_{n+1}$ refines $\mathcal{T}_{n}$ and the error sets decrease: $E_{n+1} \subseteq E_n$ for all $n$. \end{defn}

In other words, every level in $\mathcal{T}_{n}$ is a union of levels from $\mathcal{T}_{n+1}$, and thus one can think of a stacking rank one system as being one associated with a ``cutting and stacking" construction.  The two types of rank one systems listed above are related, as described in the following theorem.

\begin{thm}[\cite{RS}, Theorem 3.8] Let $\{F_n\}$ be a F{\o}lner sequence for $\Z^d$ with $\mathbf{0} \in F_n$ for all $n$.  Then any $\{F_n\}$-F{\o}lner rank one $\Z^d$-action is stacking rank one, with towers whose shapes are given by $\{F_n\}$.
\end{thm}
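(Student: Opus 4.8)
The plan is to show that, starting from an $\{F_n\}$-F\o lner rank one action $\T : \Z^d \actson (X,\mu)$ with witnessing towers $\mathcal{T}_n$ of shape $F_n$, one can pass to a subsequence along which the towers can be adjusted to nest. First I would observe that, since $\{F_n\}$ is a F\o lner sequence, for each fixed $n$ the base $B_n$ of $\mathcal{T}_n$ can be approximated to within any $\ep > 0$ in $\mu$-measure by a $\mathcal{T}_m$-measurable set $L$ once $m$ is large; moreover, because $F_m$ is F\o lner, most translates of $L$ under the $\T^{\v}$, $\v \in F_m$, behave coherently, so one can in fact find a single column (a maximal $\{\T^{\v}: \v \in F_n\}$-orbit-block inside $\mathcal{T}_m$) that occupies almost all of $L$. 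The key point to extract is: \emph{given $\ep>0$ and $n$, for all sufficiently large $m$ there is a subset $B_n' \subseteq X$, expressible as a union of levels of $\mathcal{T}_m$, which is a base of a $\T$-tower of shape $F_n$ and with $\mu(B_n' \triangle B_n) < \ep$ and whose tower's error set differs from $E_n$ by less than $\ep$.} This is the usual ``a column inside a tall enough tower looks like the whole previous tower'' phenomenon, and the F\o lner hypothesis is exactly what guarantees that the boundary effects — the translates $\T^{\v}(L)$ that spill out of $\mathcal{T}_m$ or overlap incorrectly — are negligible.

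Next I would run a diagonal/telescoping argument. Choose a summable sequence $\ep_k \to 0$. Set $\widetilde{\mathcal{T}}_1 = \mathcal{T}_{n_1}$ for $n_1 = 1$. Having chosen $n_1 < \cdots < n_k$ and modified towers $\widetilde{\mathcal{T}}_1, \dots, \widetilde{\mathcal{T}}_k$ (with $\widetilde{\mathcal{T}}_j$ of shape $F_{n_j}$, nesting, error sets decreasing, and $\mu(\widetilde{\mathcal{T}}_j \triangle \mathcal{T}_{n_j})$ small), use the rank one property to pick $n_{k+1}$ so large that every level of $\widetilde{\mathcal{T}}_k$ is approximated within $\ep_{k+1}$ by a $\mathcal{T}_{n_{k+1}}$-measurable set, and then apply the boxed claim to replace $\mathcal{T}_{n_{k+1}}$ by a tower $\widetilde{\mathcal{T}}_{k+1}$ of shape $F_{n_{k+1}}$ that (i) $\ep_{k+1}$-approximates $\mathcal{T}_{n_{k+1}}$ and (ii) genuinely refines $\widetilde{\mathcal{T}}_k$ with $\widetilde E_{k+1} \subseteq \widetilde E_k$. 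Step (ii) is arranged by taking the base of $\widetilde{\mathcal{T}}_{k+1}$ to sit inside a level of $\widetilde{\mathcal{T}}_k$ and be a union of $\mathcal{T}_{n_{k+1}}$-levels; the F\o lner condition ensures that pushing this base around by $F_{n_{k+1}}$ sweeps out (almost) all of $\widetilde{\mathcal{T}}_k$ level-by-level, which is what ``refines'' means here. Because the $\ep_k$ are summable, the symmetric differences $\mu(\widetilde{\mathcal{T}}_k \triangle \mathcal{T}_{n_k})$ can be kept summable, hence the $\widetilde{\mathcal{T}}_k$ still approximate every $A \in \mathcal{X}$ (any set approximated by $\mathcal{T}_{n_k}$-measurable sets is, after an $o(1)$ error, approximated by $\widetilde{\mathcal{T}}_k$-measurable sets). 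Thus $\T$ is stacking rank one with shapes $\{F_{n_k}\}$, a subsequence of the original F\o lner sequence — which, being a subsequence of a F\o lner sequence, is itself F\o lner, so the conclusion is of the claimed form.

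The main obstacle is the boxed claim, i.e. controlling the geometry when one cuts a column of the correct shape $F_n$ out of a much taller tower $\mathcal{T}_m$ while simultaneously making the new base a union of $\mathcal{T}_m$-levels \emph{and} making the resulting tower nest inside the previously-constructed tower. In the $d=1$ case this is transparent — a column is just a contiguous block of levels — but for general shapes $F_n \subseteq \Z^d$ one must check that $\{\T^{\v}(B_n'): \v \in F_n\}$ are genuinely disjoint (inherited from disjointness in $\mathcal{T}_m$ provided $B_n'$ lies in a single $F_n$-translate-free portion) and that the leftover set $E$ really is the complement. This is where the F\o lner property is indispensable and does all the work: it forces the number of ``bad'' translates — those for which $\T^{\v}$ carries $B_n'$ outside the ambient tower or onto the error set — to be a vanishing fraction of $\#F_m$, so the measure lost is $O(\#(\partial F_m)/\#F_m) \cdot \mu(X) \to 0$. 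Once this estimate is in hand, the induction and the diagonal passage to a subsequence are routine, and I expect the bookkeeping of the nested error sets $\widetilde E_{k+1} \subseteq \widetilde E_k$ to require only care, not new ideas.
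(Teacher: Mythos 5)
This theorem is quoted from \cite{RS} and the paper supplies no proof of it, so there is no internal argument to compare yours against; I can only assess the proposal on its own terms. Your strategy --- approximate $B_n$ by a union of levels of a much later tower $\mathcal{T}_m$, prune to obtain an exact $\T$-tower of shape $F_n$, then telescope along a subsequence --- is the standard route, and your boxed claim is true and is indeed the crux. But one of your supporting explanations is misdirected and one essential step is not actually carried out. On the first point: the disjointness of $\{\T^{\v}(B_n') : \v \in F_n\}$ does not come from the F{\o}lner property, which you credit with ``doing all the work.'' F{\o}lner only controls the indices $\w + \v$ that leave $F_m$. The disjointness must be extracted from the original tower $\mathcal{T}_n$: writing the approximating union of levels as $L = \bigcup_{\w \in S} \T^{\w}(B_m)$ with $\mu(L \, \triangle \, B_n) < \delta$, one has, for $\v_1 \neq \v_2$ in $F_n$, that $\mu\left(\T^{\v_1}(L) \cap \T^{\v_2}(L)\right) \leq \mu\left(\T^{\v_1}(B_n) \cap \T^{\v_2}(B_n)\right) + 2\delta = 2\delta$, which bounds $\#\left(S \cap (S + \v_2 - \v_1)\right)\mu(B_m)$ and lets you delete the offending $\w$'s at a total cost of about $2\delta\,(\#F_n)^2$ in measure. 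Without this step there is no reason that a union of $\mathcal{T}_m$-levels close to $B_n$ admits pairwise disjoint $F_n$-translates at all.

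The more serious gap is the inductive nesting step, which is the entire content of ``stacking.'' Your boxed claim rebuilds the \emph{small} tower out of levels of the \emph{big} one; but in your induction you require the new big tower $\widetilde{\mathcal{T}}_{k+1}$ --- an $\ep_{k+1}$-perturbation of $\mathcal{T}_{n_{k+1}}$ --- to exactly refine $\widetilde{\mathcal{T}}_k$, whose levels were constructed as unions of levels of the \emph{unperturbed} $\mathcal{T}_{n_{k+1}}$. Perturbing $\mathcal{T}_{n_{k+1}}$ destroys that exact refinement, and ``taking the base of $\widetilde{\mathcal{T}}_{k+1}$ to sit inside a level of $\widetilde{\mathcal{T}}_k$'' does not ensure what refinement actually requires, namely that \emph{every} translate $\T^{\v}(B'_{k+1})$, $\v \in F_{n_{k+1}}$, lies wholly inside a single level of $\widetilde{\mathcal{T}}_k$ or wholly inside its error set. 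Fixing this requires either re-correcting all earlier towers at every stage and passing to a limit with summable errors, or a globally consistent assignment of $\mathcal{T}_{n_j}$-levels to $\mathcal{T}_{n_k}$-levels for all $j > k$ at once; neither is the routine bookkeeping you describe. Finally, even once completed, your argument yields nested towers only along a subsequence $\{F_{n_k}\}$, which is formally weaker than the stated conclusion ``with towers whose shapes are given by $\{F_n\}$'' --- though a subsequence does suffice for every application of the theorem in this paper.
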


Now consider a $\Z^d$-action which is F{\o}lner rank one, and thus is also stacking.  Fix $m$ and consider the $\T$-tower $\mathcal{T}_m$ of shape $F_m$.  Because $\T$ is stacking, we know the base $B_m$ of $\mathcal{T}_m$ is refined by the $\T$-tower $\mathcal{T}_n$ of shape $F_n$ for any $n\ge m$.  That is, for each $n\ge m$, there is a union of levels in $\mathcal{T}_n$ which equal $B_m$, leading us to the following definition:

\begin{defn} Given a F{\o}lner rank one $\Z^d$-action $\T: \Z^d \actson (X,\mu)$ and any $n, m \in \mathbb{N}$ with  $n\ge m$, define
\[I_{m,n} = \{\i \in F_n : \T^\i(B_n) \subseteq B_m \}.\]
In \cite{FGHSW}, these sets are called the {\bf{descendants}} of the base $B_m$.
\end{defn}

One of the reasons to restrict to $\Z^d$-actions that are F{\o}lner rank one is given by the following lemma, which identifies a relationship between any F{\o}lner sequence and any finite index subgroup of $\Z^d$.  More specifically, it says that eventually, every set in the F{\o}lner sequence intersects each coset of a finite index subgroup in roughly the same proportion.

%
%

\begin{lemma}
\label{folnerlemma}
 Let $\{F_n\}$ be a F{\o}lner sequence in $\Z^d$, and let $G$ be a finite-index subgroup of $\Z^d$.  Then for every $\v\in\Z^d$, 
 \[ 
 \stackbin[n \to \infty]{}{\lim} \frac{\#(F_n \cap (\v + G))}{\#(F_n)} = \frac 1{[\Z^d:G]}.
 \]
\end{lemma}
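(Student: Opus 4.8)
The plan is to reduce the statement to the definition of a F\o lner sequence applied to a carefully chosen finite collection of translation vectors, namely a set of coset representatives for $G$ in $\Z^d$. First I would fix coset representatives $\w_0 = \0, \w_1, \dots, \w_{k-1}$ for $\Z^d / G$, where $k = [\Z^d : G]$, so that $\Z^d$ is the disjoint union of the cosets $\w_j + G$. Since these cosets partition $\Z^d$, for any $\v \in \Z^d$ the coset $\v + G$ equals $\w_{j} + G$ for exactly one index $j$, so it suffices to prove the limit statement for each $\v = \w_j$; and in fact, translating by a single element of $G$ does not change the intersection $F_n \cap (\v+G)$ at the level of the coset, so what we really need is that the fraction of $F_n$ lying in any fixed coset tends to $1/k$.

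Next I would use the F\o lner property to show that, for each fixed $j$, the quantity $\#(F_n \cap (\w_j + G))$ is asymptotically insensitive to which coset we pick. The key observation is that if $\v - \v'$ lies in $G$, then $(\v + G) = (\v' + G)$, whereas translating $F_n$ by $\v - \v'$ moves $F_n \cap (\v' + G)$ onto $[F_n + (\v - \v')] \cap (\v + G)$; more usefully, I would compare $F_n \cap (\w_j + G)$ with $F_n \cap (\w_{j'} + G)$ by translating $F_n$ by $\w_j - \w_{j'}$ and invoking that $\#(F_n \triangle [F_n + (\w_j - \w_{j'})]) = o(\#(F_n))$. This forces
\[
\left| \#(F_n \cap (\w_j + G)) - \#(F_n \cap (\w_{j'} + G)) \right| = o(\#(F_n))
\]
for every pair $j, j'$. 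Since the $k$ sets $F_n \cap (\w_j + G)$ are disjoint with union $F_n$, summing over $j$ gives $\#(F_n) = \sum_j \#(F_n \cap (\w_j + G))$, and combining this with the near-equality of all $k$ summands yields $\#(F_n \cap (\w_j + G)) = \frac{1}{k}\#(F_n) + o(\#(F_n))$ for each $j$. Dividing by $\#(F_n)$ and letting $n \to \infty$ completes the argument, since every $\v + G$ is one of the $\w_j + G$.

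The main obstacle — really the only point requiring care — is making precise the comparison of $F_n \cap (\w_j + G)$ with a translate of $F_n \cap (\w_{j'} + G)$ inside the correct coset, so that the symmetric-difference bound from the F\o lner condition applies cleanly. Concretely, I would observe that $x \in F_n \cap (\w_{j'} + G)$ iff $x + (\w_j - \w_{j'}) \in [F_n + (\w_j - \w_{j'})] \cap (\w_j + G)$, so the map $x \mapsto x + (\w_j - \w_{j'})$ is a bijection between $F_n \cap (\w_{j'} + G)$ and $[F_n + (\w_j - \w_{j'})] \cap (\w_j + G)$; then the difference between $\#(F_n \cap (\w_j + G))$ and $\#([F_n + (\w_j - \w_{j'})] \cap (\w_j + G))$ is at most $\#(F_n \triangle [F_n + (\w_j - \w_{j'})])$, which is $o(\#(F_n))$ by the F\o lner property applied to the fixed vector $\w_j - \w_{j'}$. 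Everything else is bookkeeping with finitely many cosets.
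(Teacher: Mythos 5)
Your proof is correct, and the central estimate is the same one the paper uses: translate $F_n$ by the difference of two coset representatives, note that this translation carries $F_n \cap (\w_{j'}+G)$ bijectively onto $[F_n + (\w_j - \w_{j'})] \cap (\w_j + G)$, and bound the resulting discrepancy in coset counts by $\#\left(F_n \, \triangle \, [F_n + (\w_j - \w_{j'})]\right)$, which is $o(\#(F_n))$ by the F\o lner property applied to the finitely many fixed vectors $\w_j - \w_{j'}$. Where you differ is in the logical packaging. The paper argues by contradiction: it assumes the limit fails along a subsequence, splits into the cases where some coset is undercounted or overcounted by a definite factor, uses a pigeonhole argument to find a second coset whose proportion sits on the other side of $1/[\Z^d:G]$, and then derives a contradiction with the F\o lner condition from the same translation inequality --- and it must run this argument twice, once for each case. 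You instead sum the identity $\#(F_n) = \sum_{j'} \#(F_n \cap (\w_{j'}+G))$ and use the pairwise near-equality of the $k$ summands to conclude directly that each equals $\frac{1}{k}\#(F_n) + o(\#(F_n))$. Your version avoids the subsequence extraction, the pigeonhole step, and the case split, and in my view is the cleaner way to organize the same underlying idea; the paper's version makes the quantitative failure more explicit, which costs it roughly twice the length. Both are complete proofs.
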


\begin{proof}
Suppose not.  Then there is a coset $\w + G$ in $\Z^d/G$ and a subsequence $\{F_{n_k}\}$ of $\{F_n\}$ 
so that one of two things happens:  either there is a constant $\alpha \in (0,1)$ so that for all sufficiently large $k$,
\begin{equation}
\label{folnerequation1}
  \frac{\#(F_{n_k} \cap (\w + G))}{\#(F_{n_k})} \leq \frac \alpha{[\Z^d:G]},
\end{equation}
or there is a constant $\beta > 1$ so that for all large enough $k$, 
\begin{equation}
\label{folnerequation1a}
  \frac{\#(F_{n_k} \cap (\w + G))}{\#(F_{n_k})} \geq \frac \beta{[\Z^d:G]}.
\end{equation}


In the first situation, notice that it is also the case that for every $k$, there exists a coset $\v + G$ such that
\[
\#\left(F_{n_k} \cap (\v + G)\right) \geq \frac{1}{[\Z^d : G]} \, \#(F_{n_k}).
\]
Since there are only finitely many cosets in $\Z^d/G$, it follows that there is a single coset $\v + G \in \Z^d/G$ and a subsequence 
$\{F_{n_{k_l}}\}$ of $\{F_{n_k}\}$ so that for all $l$, 
\begin{equation}
\label{folnerequation2}
\frac{\#\left(F_{n_{k_l}} \cap (\v + G)\right)}{\#(F_{n_{k_l}})} \geq \frac{1}{ [\Z^d : G]}.
\end{equation}

Now consider $\x \in F_{n_{k_l}} \cap (\v + G)$.  In particular, this says $\x \in (\v + G)$, and so we have $\x - \v \in G$ 
and $\x - \v + \w \in \w + G$.  
Either $\x - \v + \w $ is in $F_{n_{k_l}}$ or it is not: if it is, then $\x - \v + \w \in F_{n_{k_l}} \cap (\w + G)$; if it is not, then 
 $\x - \v + \w \in F_{n_{k_l}} \, \triangle \, \left[F_{n_{k_l}} + (\w - \v)\right]$.  We thus have
 \begin{equation}
 \label{triangleinequalityone}
 \# \left(\, F_{n_{k_l}} \cap (\v + G)\, \right) \le \# \left( F_{n_{k_l}} \, \triangle \, \left[F_{n_{k_l}} + (\w - \v)\right] \right) \, + 
 \, \# \left(  F_{n_{k_l}} \cap (\w + G)  \right).
\end{equation}
Using (\ref{folnerequation1}) on the right and (\ref{folnerequation2}) on the left, we get
\[
 \frac{1}{ [\Z^d : G]}\,  \# ( F_{n_{k_l}}  ) \le \# \left( F_{n_{k_l}} \, \triangle \, \left[F_{n_{k_l}} + (\w - \v)\right] \right) \, +
 \frac \alpha{[\Z^d:G]}\,  \#(F_{n_{k_l}} ),
\]
 yielding that for all $l$ we have
 \[
  \frac{1-\alpha}{ [\Z^d : G]}\, \#(F_{n_{k_l}} ) \le\# \left( F_{n_{k_l}} \, \triangle \, \left[F_{n_{k_l}} + (\w - \v)\right] \right)  
 \]
 or 
 \[
   \frac{1-\alpha}{ [\Z^d : G]} \le \frac{ \# \left( F_{n_{k_l}} \, \triangle \, \left[F_{n_{k_l}} + (\w - \v)\right] \right)}{\#(F_{n_{k_l}} )}.
 \]

We thus have that
\[
\lim_{l\rightarrow\infty} \frac{ \# \left( F_{n_{k_l}} \, \triangle \, \left[F_{n_{k_l}} + (\w - \v)\right] \right)}{\#(F_{n_{k_l}} )} \neq 0
\]
which contradicts $\{F_n\}$ being a F{\o}lner sequence.


A similar argument works in the second situation:  this time, we notice that for every $k$, there is $\v + G$ so that
\[\#(F_{n_k} \cap (\v + G)) \leq \frac 1{[\Z^d : G]} \#(F_{n_k}),\]
so there is a single coset $\v + G$ and a subsequence $\{F_{n_{k_l}}\}$ of $F_{n_k}$ so that for all $l$,
\begin{equation}
\label{folnerequation3}
\frac{\#\left(F_{n_{k_l}} \cap (\v + G)\right)}{\#(F_{n_{k_l}})} \leq \frac{1}{ [\Z^d : G]}.
\end{equation}
Repeating the argument used to obtain (\ref{triangleinequalityone}) but with the roles of $\v$ and $\w$ interchanged, we obtain
\[
 \# \left(\, F_{n_{k_l}} \cap (\w + G)\, \right) \le \# \left( F_{n_{k_l}} \, \triangle \, \left[F_{n_{k_l}} + (\v - \w)\right] \right) \, + 
 \, \# \left(  F_{n_{k_l}} \cap (\v + G)  \right).
\]
Now using (\ref{folnerequation1a}) on the left and (\ref{folnerequation3}) on the right we get
\[
 \frac{\beta}{ [\Z^d : G]}\,  \# ( F_{n_{k_l}}  ) \leq \# \left( F_{n_{k_l}} \, \triangle \, \left[F_{n_{k_l}} + (\w - \v)\right] \right) \, +
 \frac {1}{[\Z^d:G]}\,  \#(F_{n_{k_l}} ),
\]
which can be rewritten as
\[
 \frac{\beta - 1}{ [\Z^d : G]} \leq \frac{\# \left( F_{n_{k_l}} \, \triangle \, \left[F_{n_{k_l}} + (\w - \v)\right] \right)}{\# (F_{n_{k_l}})}.
\]
This too contradicts $\{F_n\}$ being F{\o}lner. 
\end{proof}


\subsection{Odometers}\label{odometers}  
We will especially consider a well-studied class of rank one systems called odometers; background on odometers where the acting group is $\mathbb{Z}$ can be found in \cite{Dow}.  $\Z^d$-odometers can be defined in a variety of ways; the approach we review here is a construction due to Cortez \cite{Cor}, but there is an equivalent characterization given by Giordano, Putnam and Skau \cite{GPS3}. 

For Cortez' construction, we begin by considering any decreasing sequence $\mathfrak{G} = \{G_j\}_{j = 1}^\infty$ of subgroups of $\Z^d$, where each $G_j$ has finite index in $\Z^d$.  For each $j \geq 1$, let $q_j: \Z^d/G_{j+1} \to \Z^d/G_j$ be the quotient map.  Then define
\begin{align*}
X_\gothG & = \stackrel[\longleftarrow]{}{\lim} (\Z^d/G_j) \\
& = \{(\x_1, \x_2, \x_3, ...) : \x_j \in \Z^d/G_j \textrm{ and } q_j(\x_{j+1}) = \x_j \textrm{ for all }j\}.
\end{align*}
$X_\gothG $ is a topological group, where the topology is the product of the discrete topologies on each $\Z^d/G_j$.   Also, for each $j \geq 1$ there is a natural coordinate map $\pi_j : X_\gothG \to \Z^d/G_j$, and we define $\mathcal{X}_\gothG$ on $X_\gothG$ to be the $\sigma$-algebra generated by the sets $\{\pi_j^{-1}(\x + G_j) : j \in \{1,2,3,...\}, \x \in \Z^d\}$.  More importantly, there is a minimal action $\sigma_\gothG : \Z^d \actson X_\gothG$ given by
\[
\sigma_\gothG^\v(\x_1, \x_2, \x_3, ...) = (\x_1 + \v + G_1, \x_2 + \v + G_2, \x_3 + \v + G_3, ...).
\]  

\begin{defn}
\label{CortezDef}
A \textbf{\emph{$\Z^d$-odometer}} is any $\Z^d$-action conjugate to one of the form $(X_\gothG, \sigma_\gothG)$ described above, where $\gothG$ is some decreasing sequence of finite-index subgroups of $\Z^d$.
\end{defn}

We remark that $G$-odometers can be defined for any residually finite group $G$ (not just $\Z^d$); see \cite{CP}. 

The following results can be found in Theorem 2.2 of \cite{GPS3}:

%
%

\begin{thm}[Basic properties of odometers] \label{basicodomprops} Let $\gothG = \{G_1, G_2, ...\}$ be a decreasing sequence of finite-index subgroups of $\Z^d$.
\begin{enumerate}
\item So long as $G_j \neq G_{j+1}$ for infinitely many $j$, $X_\gothG$ is a Cantor set, $(X_\gothG, \mathcal{X}_\gothG)$ is a standard Lebesgue space and $\sigma_\gothG$ is a minimal action of $\Z^d$ by homeomorphisms on $X_\gothG$;
\item $(X_\gothG, \sigma_\gothG)$ is free if and only if $\stackrel[j=1]{\infty}\bigcap G_j = \{\0\}$;
\item $(X_\gothG, \mathcal{X}_\gothG, \sigma_\gothG)$ is uniquely ergodic and its  invariant Borel probability measure $\mu_\gothG$ is Haar measure on $X_\gothG$.  This measure satisfies $\mu_\gothG \left(\pi_j^{-1}(\x + G_j)\right) = [\Z^d : G_j]^{-1}$ for all $j \geq 1$ and all $\x \in \Z^d$.
\end{enumerate}
\end{thm}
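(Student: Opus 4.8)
The plan is to establish the three parts more or less independently, relying on the inverse-limit description of $X_\gothG$ and on the fact that it carries the structure of a compact abelian group. For part (1), I would first note that $X_\gothG$, being the inverse limit of the finite discrete groups $\Z^d/G_j$ along the quotient maps $q_j$, is a closed subspace of the compact metrizable product $\prod_j (\Z^d/G_j)$, hence itself compact, metrizable and totally disconnected. When $G_j \neq G_{j+1}$ for infinitely many $j$ the indices $[\Z^d:G_j]$ are unbounded, so every point has arbitrarily small clopen neighborhoods containing other points; thus $X_\gothG$ is perfect, and Brouwer's characterization of the Cantor set applies. The $\sigma$-algebra $\mathcal{X}_\gothG$ is exactly the Borel $\sigma$-algebra of this Polish space, so $(X_\gothG,\mathcal{X}_\gothG)$ is standard. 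Continuity of $\v \mapsto \sigma_\gothG^\v$ and of each $\sigma_\gothG^\v$ is immediate from the coordinatewise definition, and $\sigma_\gothG^\v$ has continuous inverse $\sigma_\gothG^{-\v}$, so $\sigma_\gothG$ acts by homeomorphisms. For minimality it suffices to check that every orbit meets each basic clopen set $\pi_j^{-1}(\y+G_j)$: given $\x = (\x_i) \in X_\gothG$, transitivity of the translation action of $\Z^d$ on $\Z^d/G_j$ yields $\v$ with $\x_j + \v \equiv \y \pmod{G_j}$, and then $\sigma_\gothG^\v(\x) \in \pi_j^{-1}(\y+G_j)$.

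For part (2), I would simply compute the stabilizer of an arbitrary point. From the formula for $\sigma_\gothG^\v$ we have $\sigma_\gothG^\v(\x) = \x$ if and only if $\v + G_j = G_j$ for every $j$, i.e. $\v \in \bigcap_j G_j$. Hence every point has stabilizer $\bigcap_j G_j$; in particular the action is free everywhere precisely when it is free almost everywhere, and this happens exactly when $\bigcap_j G_j = \{\0\}$.

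For part (3), I would use that $X_\gothG$ is a compact abelian group and that $\sigma_\gothG^\v$ is translation by $\iota(\v)$, where $\iota : \Z^d \to X_\gothG$ is the canonical homomorphism $\v \mapsto (\v + G_j)_j$, whose image is dense by the minimality argument of part (1). Normalized Haar measure $\mu_\gothG$ is translation-invariant, hence $\sigma_\gothG$-invariant, and it assigns each of the $[\Z^d:G_j]$ cosets of the clopen subgroup $\ker \pi_j$ the same mass, which must therefore be $[\Z^d:G_j]^{-1}$; this is the stated formula. For uniqueness, suppose $\nu$ is any $\sigma_\gothG$-invariant Borel probability measure. Then $\nu$ is invariant under translation by every element of the dense set $\iota(\Z^d)$; since the map sending a group element to the corresponding pushforward of $\nu$ is continuous from $X_\gothG$ into the weak-$*$ compact space of probability measures, $\nu$ is invariant under translation by all of $X_\gothG$, and uniqueness of Haar measure forces $\nu = \mu_\gothG$. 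Thus $(X_\gothG,\mathcal{X}_\gothG,\sigma_\gothG)$ is uniquely ergodic.

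The steps in parts (1) and (2) are routine once the inverse-limit and group structures are in hand; the part I expect to require the most care is the uniqueness half of part (3). The subtle point is the passage from invariance under the dense subgroup $\iota(\Z^d)$ to invariance under all translations of $X_\gothG$, which uses that translation depends continuously on the group element together with weak-$*$ continuity of pushforward, and then invokes uniqueness of Haar measure on a compact group. An alternative, more hands-on route to unique ergodicity would verify directly that for every character, or every indicator of a cylinder set, the Birkhoff averages over a F{\o}lner sequence in $\Z^d$ converge uniformly to a constant, using a Lemma \ref{folnerlemma}-type equidistribution statement; this avoids measure-theoretic soft analysis at the cost of a short computation, and either approach should be recorded explicitly rather than deferred.
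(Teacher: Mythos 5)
Your proposal is correct, but note that the paper itself offers no proof of this theorem: it is quoted directly from Theorem 2.2 of the Giordano--Putnam--Skau reference \cite{GPS3}, so there is no internal argument to compare against. What you have written is a sound, self-contained, and essentially standard proof. Part (1) is fine: the inverse limit is a closed subspace of a compact metrizable totally disconnected product; the hypothesis that $G_j \neq G_{j+1}$ infinitely often makes the indices unbounded, and since each $\pi_k : X_\gothG \to \Z^d/G_k$ is surjective (an inverse limit of surjections of finite sets is surjective onto each level), every basic clopen set $\pi_j^{-1}(\x_j + G_j)$ contains at least $[\Z^d:G_k]/[\Z^d:G_j] > 1$ points for $k$ large, giving perfectness; one should also record that $\mathcal{X}_\gothG$, generated by the cylinder sets, coincides with the Borel $\sigma$-algebra because those sets form a countable basis for the topology. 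Part (2) is a clean pointwise stabilizer computation, and the observation that the stabilizer is the same at every point (so that a.e.\ freeness and everywhere freeness coincide) is exactly the right remark. In part (3), the passage from invariance under the dense subgroup $\iota(\Z^d)$ to invariance under all translations via weak-$*$ continuity of $g \mapsto (t_g)_*\nu$ (which rests on uniform continuity of continuous functions on the compact group) and then uniqueness of Haar measure is the standard soft argument and is complete as stated; your alternative via uniform convergence of ergodic averages over cylinder indicators, in the spirit of Lemma \ref{folnerlemma}, would also work. No gaps.
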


From Theorem \ref{basicodomprops}, we can classify $\Z^d$-odometers into two types.  First, if $X_\gothG$ is infinite (assured in the setting of (1) in Theorem \ref{basicodomprops}), we will decree $(X_\gothG, \sigma_\gothG)$ to be an \textbf{infinite $\Z^d$-odometer}.  On the other hand, if the sequence $\gothG$ is eventually constant, we obtain an odometer with only finitely many points in its phase space. This is equivalent to the following:

\begin{defn}
Suppose $G$ is a finite-index subgroup of $\Z^d$.  Let $\delta$ be uniform counting measure on the finite set $\Z^d/G$, and for each $\v \in \Z^d$, define $\tau^\v : \Z^d/G \to \Z^d/G$ to be $\tau^\v(\g + G) = \g + \v + G$.  We call any measure-preserving system conjugate to $\tau : \Z^d \actson (\Z^d/G, \delta)$ a \textbf{\emph{finite $\Z^d$-odometer}}.
\end{defn}

It is well known that odometers are rank one actions.  That said, we will make use of a specific rank one structure for a given $\Z^d$-odometer, which we describe in the following result.

%
%

\begin{thm} 
\label{Ggenerators}
Let $(X_\gothG, \sigma_\gothG)$ be the $Z^d$-odometer given by the decreasing sequence $\gothG = \{G_1, G_2, G_3, ...\}$ of finite-index subgroups of $\Z^d$.  Then:
\begin{enumerate}
\item There exist upper-triangular matrices $A_1, A_2, A_3, ... \in GL(d,\Z)$ so that each $G_j$ is generated by the columns of $A_j$; and
\item $(X_\gothG, \sigma_\gothG)$ is stacking rank one for a F{\o}lner sequence of rectangular shapes, where the error sets of the associated towers are empty.
\end{enumerate}
\end{thm}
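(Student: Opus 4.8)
The plan is to prove the two parts in order, using part (1) to set up the rectangular structure needed for part (2).

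\textbf{Part (1).} Given a finite-index subgroup $G_j \le \Z^d$, I would produce the upper-triangular generating matrix by the standard Hermite normal form argument. Choose any matrix $M_j \in GL(d,\Q) \cap M_d(\Z)$ whose columns generate $G_j$ as a subgroup (possible since $G_j$ has full rank $d$ and finite index). By applying elementary column operations over $\Z$ --- that is, multiplying $M_j$ on the right by elements of $GL(d,\Z)$, which does not change the lattice spanned by the columns --- one can bring $M_j$ into (column) Hermite normal form, which is upper triangular with positive diagonal entries. Call the result $A_j$; then the columns of $A_j$ generate $G_j$ and $A_j$ is upper triangular with positive integer diagonal, so in particular $\det A_j = [\Z^d : G_j] \ge 1$. (Note the $A_j$ need not be nested in any obvious way; only the $G_j$ are decreasing. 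I would remark that we do not need $A_{j+1}$ to be a right-multiple of $A_j$ --- what matters is only that $G_{j+1} \subseteq G_j$, equivalently $A_j^{-1} A_{j+1}$ is an integer matrix.)

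\textbf{Part (2).} The idea is to realize $X_\gothG$ via a cutting-and-stacking picture whose towers are honest rectangles (boxes) in $\Z^d$, matched to the fundamental domains of the $G_j$. For each $j$, let $R_j$ be the half-open box spanned by the diagonal entries of $A_j$; more precisely, since $A_j$ is upper triangular, I would instead take $R_j$ to be a specific set of coset representatives for $\Z^d / G_j$ shaped like a box --- this is where upper-triangularity from part (1) is used, because an upper-triangular $A_j$ lets one choose the fundamental domain $R_j = \{ \x \in \Z^d : 0 \le x_i < (\text{appropriate bound depending on row } i) \}$ inductively (solve for $x_1$ modulo the first diagonal entry, then $x_2$, and so on). Identify $\Z^d / G_j$ with $R_j$ via this choice. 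Since $G_{j+1} \subseteq G_j$, each coset of $G_j$ splits into $[\G_j : G_{j+1}]$ cosets of $G_{j+1}$, and in the box coordinates this exhibits $R_{j+1}$ as a disjoint union of translates of (a rearrangement of) $R_j$; this is precisely the statement that the level-$j$ tower $\mathcal{T}_j$ of shape $R_j$ is refined by $\mathcal{T}_{j+1}$. Concretely, define $B_j = \pi_j^{-1}(\mathbf{0} + G_j) \subseteq X_\gothG$ and $\mathcal{T}_j = \{ \sigma_\gothG^{\v}(B_j) : \v \in R_j \}$; by Theorem~\ref{basicodomprops}(3), $\mu_\gothG(\sigma_\gothG^\v(B_j)) = [\Z^d:G_j]^{-1}$ and $\{ \sigma_\gothG^\v(B_j) : \v \in R_j \}$ partitions $X_\gothG$ exactly (no error set), since $R_j$ is a complete set of coset representatives. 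Disjointness of the levels $\sigma_\gothG^{\v_1}(B_j), \sigma_\gothG^{\v_2}(B_j)$ for distinct $\v_1,\v_2 \in R_j$ is immediate because $\v_1 - \v_2 \notin G_j$ forces $\sigma_\gothG^{\v_1 - \v_2}$ to move the clopen set $B_j$ off itself. That the $\{ \mathcal{T}_j \}$ generate (the rank-one approximation condition of Definition~\ref{rankonedefn}) follows because the levels of $\mathcal{T}_j$ are exactly the sets $\pi_j^{-1}(\x + G_j)$, and these generate $\mathcal{X}_\gothG$ by definition of the $\sigma$-algebra. Finally, to conclude $(X_\gothG,\sigma_\gothG)$ is \emph{F{\o}lner} rank one (hence stacking rank one), I need $\{ R_j \}$ to be a F{\o}lner sequence; this requires passing to a subsequence of the $G_j$ if necessary so that the boxes $R_j$ grow in every coordinate direction --- possible exactly when $X_\gothG$ is infinite, and harmless in the finite case since there "F{\o}lner" and "stacking" are automatic for an eventually-constant sequence.

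\textbf{Main obstacle.} The delicate point is the bookkeeping that makes the refinement $\mathcal{T}_{j+1}$ of $\mathcal{T}_j$ literally a tower of \emph{rectangular} shape rather than merely an abstract refinement: one must check that the box fundamental domain $R_j$ of $G_j$ can be chosen compatibly with $R_{j+1}$ so that $R_{j+1}$ decomposes as a disjoint union of $\Z^d$-translates of $R_j$ inside $\Z^d$. Upper-triangularity of the $A_j$ from part (1) is what unlocks this, but the $A_j$ were chosen independently for each $j$, so a priori $A_j^{-1}A_{j+1}$ is only \emph{some} integer matrix, not upper triangular. I would handle this by choosing the $A_j$ recursively: having fixed $A_j$, pick $A_{j+1}$ to be the Hermite normal form of a generating matrix for $G_{j+1}$ computed \emph{relative to the column basis of $A_j$} --- i.e. apply the HNF reduction to $A_j^{-1}(\text{generators of } G_{j+1})$ and then left-multiply by $A_j$ --- which guarantees $A_j^{-1}A_{j+1}$ is itself upper triangular with positive diagonal, and hence that the box $R_{j+1}$ (read in $A_j$-coordinates) fibers over $R_j$ as a product of intervals. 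With that recursive choice in hand, the refinement is a direct coordinate computation and everything else is routine.
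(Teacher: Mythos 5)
Your proposal follows essentially the same route as the paper: part (1) is the Hermite normal form construction (the paper carries it out by hand, defining $a_{j,l,l}$ as the least $n>0$ with $n\e_l$ congruent mod $G_j$ to a vector supported on the first $l-1$ coordinates), and part (2) builds the towers exactly as you do, with $B_j = \pi_j^{-1}(\0 + G_j)$, levels indexed by a box of coset representatives, empty error sets, and generation of $\mathcal{X}_\gothG$ by definition. Two remarks, however. First, what you flag as the ``main obstacle'' is not an obstacle under the paper's definition of stacking rank one: that definition only requires each level of $\mathcal{T}_j$ to be a \emph{union} of levels of $\mathcal{T}_{j+1}$ (with decreasing error sets), and since the levels are precisely the preimages of cosets of $G_j$ and $G_{j+1} \leq G_j$, this is immediate --- the paper's proof is the one-line observation that $\pi_{j+1}^{-1}(\v + G_{j+1}) \subseteq \pi_j^{-1}(\widetilde{\v} + G_j)$. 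No geometric decomposition of $R_{j+1}$ into $\Z^d$-translates of $R_j$ is needed, so your recursive compatible-HNF construction, while not wrong, is unnecessary machinery.

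Second, your claim that one can pass to a subsequence of the $G_j$ so that the boxes grow in every coordinate direction ``exactly when $X_\gothG$ is infinite'' is false: for $\gothG = \{2^j\Z \times \Z\}$ the odometer is infinite but $F_j = \{0,\dots,2^j-1\}\times\{0\}$ for every $j$, and no subsequence is F{\o}lner in $\Z^2$. (The paper itself exhibits exactly this system in Theorem \ref{stackingnonex} as one that is stacking but not F{\o}lner rank one.) To be fair, the paper's own proof is silent on verifying the F{\o}lner property of the shapes $F_j$, and the ``F{\o}lner'' adjective in the statement of part (2) is not justified in general; but you should not paper over this with an incorrect reduction to the infinite case --- either drop the F{\o}lner claim (proving only stacking rank one, which is all the construction delivers) or impose a hypothesis guaranteeing that the diagonal entries $a_{j,l,l} \to \infty$ for every $l$.
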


\begin{proof} For each $j$, first let 
\[
a_{j,1,1} = \min \{ n>0: n \e_1 \in G_j\}.
\]
and set $a_{j,k,1} = 0$ for $k \in \{2, ..., d\}$.  (Here and throughout, $\e_j$ is the $j^{th}$ standard basis vector of $\mathbb{R}^d$.)  Since $G_j$ has finite index, the integer $a_{j,1,1}$ exists. 

Next, let
\[
a_{j,2,2} =  {\mbox{min}}\{ n>0: \exists\, i \textrm{ such that }n\e_2 + G_j = (i,0,...,0) + G_j \}.
\]
and set $a_{j,1,2}$ to be any value of $i$ that works in the previous line, meaning $a_{j,2,2}\e_2 + G_j = (a_{j,1,2},0,...,0) + G_j$.  Then, for any $k \in \{3, .., d\}$, set $a_{j,k,2} = 0$.

Continuing in this fashion, for each $l \in \{1,...,d\}$, define 
\begin{align*}
a_{j,l,l} & = {\mbox{min}}\{  n>0: \exists \, i_1, i_2, ..., i_{l-1} \textrm{ such that } \\
& \qquad n\e_l + G_j = (i_1,i_2,...,i_{l-1},0,...,0) + G_j \};
\end{align*}
then choose $a_{j,1,l}, a_{j,2,l}, ... a_{j, l-1, l}$ to be any values of $i_1, i_2, ..., i_{l-1}$ that work in the previous line, and finally, for any $k \in \{l+1, ..., d\}$, set $a_{j,k,l} = 0$.

Having done this, we obtain upper-triangular $d \times d$ matrices $A_j = \left(a_{j,k,l}\right)_{k,l}$ so that $G_j = A_j \Z^d$.  The columns of each of these matrices form a set of $d$ linearly independent vectors that generate $G_j$.

Furthermore, by setting $F_j = \{0, 1, ..., a_{j,1,1}-1\} \times \{0, 1, ..., a_{j,2,2} -1\} \times \cdots \times \{0, 1, ..., a_{j,d,d} - 1\}$, we see that the shape $F_j$ contains exactly one representative element from each coset in $\Z^d/G_j$.  So for each $j$, by setting $B_j = \pi_j^{-1}(\0 + G_j)$, the sets
$\{\pi_j^{-1}(\v + G_j) : \v \in F_j\}$ form a $\sigma_\gothG$-tower $\mathcal{T}_j$ of shape $F_j$ with base $B_j$, where the error set is empty.  The corresponding $\sigma_\gothG$-towers so obtained generate the $\sigma$-algebra $\mathcal{X}_\gothG$ by definition, meaning $(X_\gothG, \sigma_\gothG)$ is rank one.

Finally, to show the towers are stacking, let $\v \in F_{j+1}$, and then set $\widetilde{\v} \in F_j$ to be the unique representative in $F_j$ of the coset $q_j(\v + G_{j+1}) \in \Z^d/G_j$.  The level $\pi_{j+1}^{-1}(\v + G_{j+1})$ of $\mathcal{T}_{j+1}$ is a subset of the level $\pi_j^{-1}(\widetilde{\v} + G_j)$ of $\mathcal{T}_j$, so the towers stack as wanted.  \end{proof}


\subsection{Finite odometer factors}



An eventual goal of this work is to determine criteria that ensure when a F{\o}lner rank one $\Z^d$-system factors onto, or is conjugate to, a $\Z^d$-odometer.  Toward that end, it will be useful to consider when such a system factors onto a finite odometer.  For example, when the original system is itself an odometer, given by sequence $\gothG = \{G_1, G_2, G_3, ...\}$, it is easy to obtain a factor map from $(X_\gothG, \sigma_\gothG)$ onto each finite odometer $(\Z^d/G_j, \tau)$ simply by projecting $x\in X_\gothG$ onto its $j^{th}$ component.  In fact, we will see later that the set of $G$ for which an odometer factors onto $(\Z^d/G, \tau)$ determines the odometer up to conjugacy.  With this idea as motivation, we will examine the set of all finite odometer factors of a general action as follows, where $\mathcal{FF}$ refers to ``finite factors", a shortened version of ``finite odometer factors":

\begin{defn}\label{FiniteOdometerFactors}
Given the measure-preserving system $\T : \Z^d \actson (X,\mu)$, set
\[
\mathcal{FF}(\T) = \{G \leq \Z^d : [\Z^d : G] < \infty \textrm{ and } (X,\mu,\T) \textrm{ factors onto } (\Z^d/G, \delta, \tau)\}.
\]
\end{defn}
We begin with two general properties of the set $\FF(\T)$:

\begin{defn}
Let $\mathcal{F}$ be a set of subgroups of $\Z^d$.  We say $\mathcal{F}$ is \textbf{\emph{closed under supergroups}} if whenever $G \in \mathcal{F}$ and $G \leq H \leq \Z^d$, then $H \in \mathcal{F}$.  We say $\mathcal{F}$ is \textbf{\emph{closed under intersections}} if $G \in \F$ and $H \in \F$ imply $G \cap H$ must also be in $\F$.
\end{defn}

%
%

\begin{lemma} 
\label{propsofFF}
For any $\Z^d$-action $(X,\mu,\T)$, $\FF(\T)$ is closed under supergroups and closed under intersections.
\end{lemma}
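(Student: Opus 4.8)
The plan is to prove the two closure properties separately, in each case by constructing the required factor map from the one(s) we already have. Throughout, let $\phi : X \to \Z^d/G$ denote a factor map witnessing $G \in \FF(\T)$, so $\phi_*\mu = \delta$ and $\phi \circ \T^\v = \tau^\v \circ \phi$ a.e.\ for every $\v \in \Z^d$.

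For closure under supergroups, suppose $G \in \FF(\T)$ and $G \le H \le \Z^d$. Then $[\Z^d:H] \le [\Z^d:G] < \infty$. I would let $p : \Z^d/G \to \Z^d/H$ be the natural quotient map $\g + G \mapsto \g + H$; this is well-defined since $G \le H$, it is equivariant for the two translation actions ($p(\tau^\v(\g+G)) = \g+\v+H = \tau^\v(p(\g+G))$), and it pushes uniform counting measure on $\Z^d/G$ forward to uniform counting measure on $\Z^d/H$ because every fiber has the same size $[H:G]$. Hence $p \circ \phi : X \to \Z^d/H$ is measurable, measure-preserving onto $(\Z^d/H,\delta)$, and equivariant, so $H \in \FF(\T)$.

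For closure under intersections, suppose $G, H \in \FF(\T)$ with factor maps $\phi : X \to \Z^d/G$ and $\psi : X \to \Z^d/H$. Since $G$ and $H$ both have finite index, so does $G \cap H$. The key point is the standard embedding $\Z^d/(G\cap H) \hookrightarrow \Z^d/G \times \Z^d/H$ sending $\g + (G\cap H) \mapsto (\g+G, \g+H)$, which is an injective equivariant group homomorphism with image the ``coherent'' pairs. I would define $\Phi = (\phi,\psi) : X \to \Z^d/G \times \Z^d/H$ and observe that $\Phi$ is equivariant for the product translation action. The remaining task is to identify $\Phi_*\mu$: I claim it is the uniform measure on the image of $\Z^d/(G\cap H)$, equivalently that for $\mu$-a.e.\ $x$ the pair $(\phi(x),\psi(x))$ lies in the image and that $\Phi$ is measure-preserving onto that finite set with its uniform measure. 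Granting this, composing $\Phi$ with the inverse of the embedding onto its image gives a factor map $X \to \Z^d/(G\cap H)$, so $G\cap H \in \FF(\T)$.

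The main obstacle is exactly this last measure-theoretic claim: a factor map only needs to intertwine the actions almost everywhere, so a priori the joint distribution $\Phi_*\mu$ could be any $\tau \times \tau$-invariant measure on $\Z^d/G \times \Z^d/H$ supported off the coherent subgroup, or could fail to be uniform on it. To handle this I would use ergodicity and minimality: since $(\Z^d/(G\cap H),\tau)$ — or more directly the product system restricted to the closed orbit of $(\0+G,\0+H)$ — is a finite system on which $\Z^d$ acts transitively (the orbit of $(\0+G,\0+H)$ under translation is all coherent pairs), it is uniquely ergodic with the uniform measure as its unique invariant measure. The invariant measure $\Phi_*\mu$ decomposes over the finitely many orbits of $\tau\times\tau$ on $\Z^d/G\times\Z^d/H$; I would argue that the coherent orbit is the only one on which $\Phi_*\mu$ can place mass, because for a.e.\ $x$ the values $\phi(x)$ and $\psi(x)$ are linked through $x$ in a way forced by equivariance — concretely, one can first reduce to the ergodic case (rank one actions are ergodic by the remarks in the introduction, and a factor target of an ergodic action is ergodic) so that $\Phi_*\mu$ is ergodic and hence is the uniform measure on a single $\tau\times\tau$-orbit, and then check that orbit must be the coherent one since both coordinate projections of $\Phi_*\mu$ are already uniform (being $\phi_*\mu = \delta$ and $\psi_*\mu = \delta$), which forces the orbit to surject onto each factor, and the only orbit doing that with the right cardinality is the coherent one.
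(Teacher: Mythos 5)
Your argument for closure under supergroups is exactly the paper's: compose the given factor map with the natural quotient $\Z^d/G \to \Z^d/H$. The intersection argument, however, has a genuine gap at the point where you identify the image of $\Phi = (\phi,\psi)$. It is \emph{not} true that $\Phi_*\mu$ must be supported on the ``coherent'' orbit $\{(\g+G,\g+H) : \g \in \Z^d\}$, and the criterion you offer to force this --- that the orbit must surject onto both factors and have the right cardinality --- distinguishes nothing: \emph{every} orbit of the diagonal translation action on $\Z^d/G \times \Z^d/H$ has stabilizer exactly $G \cap H$, hence cardinality $[\Z^d : G\cap H]$, and every orbit projects onto all of $\Z^d/G$ and all of $\Z^d/H$. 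Concretely, if $\psi$ is a factor map onto $(\Z^d/H,\tau)$ then so is $\tau^{\a}\circ\psi$ for any fixed $\a$, and choosing $\a \notin G+H$ (possible whenever $G+H \neq \Z^d$, e.g.\ $G = H = 2\Z^d$) moves the support of $\Phi_*\mu$ to the orbit of $(\0+G,\a+H)$, which is disjoint from the coherent one; your final step of ``composing with the inverse of the embedding'' is then undefined on the actual image. The repair is easy precisely because you do not need the coherent orbit: every orbit is equivariantly isomorphic to $(\Z^d/(G\cap H),\tau)$ (pick a base point on each orbit and send the translate of the base point by $\v$ to $\v + (G\cap H)$), and pasting these identifications over the finitely many orbits gives an equivariant map $\Z^d/G\times\Z^d/H \to \Z^d/(G\cap H)$ that pushes any invariant probability measure forward to the uniform one. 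Once repaired this way, your proof is essentially the paper's proof in disguise: the atoms of the paper's partition $\P$ are the fibers of $\Phi$, its equivalence classes $[E]$ are the orbits in the target, and its choice of a representative atom in each class is your choice of base point.

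A secondary point: the lemma is stated for an arbitrary $\Z^d$-action, so you cannot invoke ``rank one actions are ergodic'' to reduce to the ergodic case. Fortunately ergodicity is not needed anywhere: an invariant probability measure on a finite transitive $\Z^d$-set is automatically uniform, so $\Phi_*\mu$ restricted to each orbit is a scalar multiple of the uniform measure on that orbit, which is all that the orbit-by-orbit pasting above requires.
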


\begin{proof} We begin by showing $\FF(\T)$ is closed under supergroups.  Let $G \in \FF(\T)$ and suppose $G \leq H$.  Thus there is a natural quotient map $q:(\Z^d/G, \delta_G, \tau_G) \to (\Z^d/H,  \delta_H, \tau_H)$, and therefore any factor map $\pi : (X,\mu,\T) \to (\Z^d/G,  \delta_G, \tau_G)$ yields a factor map $q \circ \pi: (X,\mu,\T)\to (\Z^d/H, \delta_H, \tau_H)$, meaning $H \in \FF(\T)$ as wanted.

We now show $\FF(\T)$ is closed under intersections.  Let $G,H \in \FF(\T)$, so that there are factor maps $\pi_G : (X,\mu,\T) \to (\Z^d/G,  \delta_G, \tau_G)$ and $\pi_H : (X,\mu,\T) \to (\Z^d/H,  \delta_H, \tau_H)$.  Now consider the partition $\P$ of $X$ coming from the values of $\pi_G$ and $\pi_H$.  Note that for each $\v \in \Z^d$, $\T^\v$ permutes the atoms of this partition.  

Now, given two atoms $E$ and $\widetilde{E}$ of $\P$, we say $E \sim \widetilde{E}$ if there is $\v \in \Z^d$ so that $\T^\v(E) = \widetilde{E}$; this gives an equivalence relation on $\P$ where we denote the equivalence class of atom $E$ by $[E]$.  For each equivalence class under the equivalence relation $\sim$, take an atom $E$ in that class; notice that $\bigcup_{\v \in \Z^d} \T^\v(E) = [E]$.  At the same time, $\T^\v(E) = E$ if and only if $\v \in G \cap H$, so each $x \in [E]$ can be written as $x = \T^\v(x')$ for $x' \in E$ where $\v$ is unique mod $(G \cap H)$.  This gives us a map $\pi_{G \cap H} : [E] \to \Z^d / (G \cap H)$ defined by
\[
\pi_{G \cap H}(x) = \v + (G \cap H) \iff x = \T^\v(x') \textrm{ for }x' \in E.
\]
As each $x \in X$ belongs to exactly one class $[E]$, we can extend $\pi_{G \cap H}$ 
 to be a map from $X$ to $\Z^d/(G \cap H)$.  The resulting map $\pi_{G \cap H}$ is  then a factor map from $(X,\mu,\T)$ to $(\Z^d/(G \cap H), \delta_{G \cap H}, \tau_{G \cap H}  )$, meaning $G \cap H \in \FF(\T)$ as desired.
\end{proof}


%
%

\section{Finite odometer factors of F{\o}lner rank one actions}

As was mentioned before, the Cortez definition of an odometer makes it is easy to see that there is a factor map $\pi_j$ from the odometer $X_\gothG$ onto the finite odometer $\Z^d/G_j$.   
Thus in order to explore the kinds of (general) odometers onto which a given $\Z^d$-action can factor, we will begin by studying the types of finite odometers onto which a  $\Z^d$-action can factor.  

%
%

\begin{thm} 
\label{finitefactorthm}
Let $\T : \Z^d \actson (X,\mu)$ be F{\o}lner rank one and let $G$ be a finite index subgroup of $\Z^d$.  Then the following are equivalent:
\begin{enumerate}
\item $G\in \mathcal{FF}(\T) $.
\item ``$\textrm{Eventually, most descendants are congruent mod } G$.''  By this, we mean that for every $\ep > 0$, there exists $N \in \N$ so that for all 
$n \geq m \geq N$, there is $\g  \in \Z^d$ such that
\[
\frac{\#(\{\i \in I_{m,n} : \i \not \equiv \g \mod G\})}{\#(I_{m,n})} < \ep.
\]
\end{enumerate}
\end{thm}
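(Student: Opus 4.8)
The plan is to prove the two implications separately, with the bulk of the work in $(2) \Rightarrow (1)$, where a factor map must be constructed from the combinatorial hypothesis on descendants.

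For $(1) \Rightarrow (2)$: Suppose $\phi : (X,\mu,\T) \to (\Z^d/G, \delta, \tau)$ is a factor map. Fix $\ep > 0$. Since the towers $\mathcal{T}_n$ generate (in the sense that their level-unions approximate any measurable set in $\mu$-measure), each of the finitely many sets $\phi^{-1}(\g + G)$, $\g + G \in \Z^d/G$, can be approximated within $\ep'$ (for $\ep'$ small, to be chosen in terms of $\ep$ and $[\Z^d:G]$) by a union of levels of $\mathcal{T}_n$ for all large $n$; choose $N$ so this holds for $n \geq N$. By the equivariance $\phi \circ \T^{\i} = \tau^{\i} \circ \phi$, two levels $\T^{\i}(B_n)$ and $\T^{\j}(B_n)$ of $\mathcal{T}_n$ that lie in the same $\phi$-fiber (up to small error) must have $\i \equiv \j \bmod G$. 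Now restrict attention to levels inside a fixed base $B_m$, i.e. to indices $\i \in I_{m,n}$: the level $\T^{\i}(B_n)$ maps under $\phi$ (up to error) to the coset $\phi(B_m) + \i + G$... wait, more carefully, $\phi(\T^{\i}(B_n)) = \tau^{\i}\phi(B_n)$, and since almost every level is nearly monochromatic for $\phi$, the set of $\i \in I_{m,n}$ whose level is NOT predominantly in the ``majority coset'' has small relative cardinality. Quantifying this: the measure of each level in $\mathcal{T}_n$ is comparable (within the error-set correction) to $\mu(B_n) = \#(I_{m,n})^{-1}\mu(B_m)$ times... I need the levels inside $B_m$ to be equidistributed in measure, which holds because all levels of $\mathcal{T}_n$ have equal measure $\mu(B_n)$ up to the error set $E_n$, and $\mu(E_n) \to 0$. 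Hence a proportion $\geq 1-\ep$ (for suitable $\ep'$) of the $\i \in I_{m,n}$ satisfy $\i \equiv \g \bmod G$ for the single coset representative $\g$ of $\phi$ restricted to (most of) $B_m$. This gives (2).

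For $(2) \Rightarrow (1)$: The idea is to build the factor map as a measure-theoretic limit. For each $n$, define a ``candidate'' map $\phi_n : X \to \Z^d/G$ as follows: each $x$ (outside $E_n$) lies in a unique level $\T^{\i}(B_n)$ of $\mathcal{T}_n$; set $\phi_n(x) = \i + G$. Equivariance holds approximately: $\phi_n(\T^{\v} x) = \phi_n(x) + \v + G$ whenever $x$ and $\T^{\v}x$ both lie in $\mathcal{T}_n$ at indices differing by exactly $\v$ — which fails only near the ``boundary'' of $F_n$, a set of small relative measure since $\{F_n\}$ is F\o lner. The hypothesis (2) is exactly what is needed to show $\{\phi_n\}$ is Cauchy: given $\ep$, take $N$ from (2); for $N \leq m \leq n$, the definition of $\phi_n$ restricted to a level $\T^{\i}(B_m)$ of $\mathcal{T}_m$ decomposes that level into sublevels $\T^{\i + \j}(B_n)$ indexed by $\j \in I_{m,n}$ (using stacking and a shift), and (2) says all but an $\ep$-fraction of these sublevels get the $\phi_n$-value $\i + \g + G$ for one fixed $\g$; so $\phi_n$ and a $G$-translate of $\phi_m$ agree off a set of measure $< \ep$ (plus error-set corrections). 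Absorbing the translates by a standard diagonal/subsequence argument — or better, by observing the translation ambiguity stabilizes — one gets $\phi_n \to \phi$ in the sense that $\mu(\{\phi_n \neq \phi\}) \to 0$ along a subsequence. The limit $\phi$ is then measurable, and passing to the limit in the approximate equivariance (using F\o lner to kill boundary terms) gives $\phi \circ \T^{\v} = \tau^{\v} \circ \phi$ a.e.; that $\phi_*\mu = \delta$ follows from each $\phi_n$ pushing $\mu$ to within $o(1)$ of uniform (the levels of $\mathcal{T}_n$ have equal measure and, by Lemma \ref{folnerlemma}, the shapes $F_n$ hit each coset of $G$ in asymptotically equal proportion). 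Hence $G \in \FF(\T)$.

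The main obstacle is the bookkeeping of the coset-translation ambiguity in $(2) \Rightarrow (1)$: condition (2) only guarantees that descendants are congruent to \emph{some} $\g = \g_{m,n}$, and one must show these choices can be made coherent across all pairs $(m,n)$ so that the maps $\phi_n$ (suitably re-normalized by a constant coset shift) genuinely form a Cauchy sequence rather than drifting. I expect this to be handled by fixing a reference level in each tower, normalizing so that a chosen point has $\phi_n$-value $\0 + G$, and then showing the induced $\g_{m,n}$ are forced to be consistent up to the same $\ep$-error; combined with completeness of the space of measurable maps $X \to \Z^d/G$ under the metric $\mu(\{\cdot \neq \cdot\})$, this yields the limit. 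A secondary technical point is ensuring the error sets $E_n$ (which shrink but are nonempty) never obstruct the estimates — this is routine since $\mu(E_n) \to 0$.
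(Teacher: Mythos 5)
Your proposal is correct and follows essentially the same route as the paper: for $(1)\Rightarrow(2)$ you approximate the fibers $\pi^{-1}(\g+G)$ by unions of levels, observe each level is nearly monochromatic, and use equivariance to force congruence of the descendant indices; for $(2)\Rightarrow(1)$ you define the tower-level maps $\phi_n(x)=\i+G$, use (2) to show consecutive maps agree up to a constant coset shift off a small set, and pass to a limit after correcting by the accumulated shifts. The only soft spot is the phrase that the translation ambiguity ``stabilizes'' --- it does not; the paper corrects $\phi_i$ by the running sum $\gamma_{i-1}=\sum_{j<i}(\g_j+G)$ of the shifts (equivalent to your normalization by a reference point), and then Borel--Cantelli with summable errors gives the a.e.\ limit, exactly as you anticipate.
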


%
\begin{proof} (1) $\so$ (2): We begin with supposing that $\pi : X \to \Z^d/G$ is a factor map and recalling that since $X$ is F{\o}lner rank one, there exists a sequence of towers $\{ \mathcal{T}_n \}$ of shape $\{ F_n \}$ whose levels generate the $\sigma$-algebra.  Fix $\ep$ such that $0< \ep <1$; let $\tilde{\ep} = \frac{\ep}{2}$ and $\eta = \frac{\tilde{\ep}}2 [\Z^d:G]^{-1}$. 

We will show that (2) holds with the following three steps:
\begin{enumerate}
\item[\emph{(i)}] We begin by showing that there is an $N \in \N$ such that for every $m\ge N$, each level $L$ of $\mathcal{T}_m$ 
has associated to it some $\gamma + G$ such that 
\[
\frac{\mu\left( \, L  - \pi^{-1}(\gamma + G) \, \right)}{\mu(L )} < \tilde{\ep}.
\]
\item[\emph{(ii)}] We next show that having 
the coset $\gamma +G$ associated to level $B_m$ as described in step (i) means the same coset is associated to most $T^{\v}(B_n)$,
where $\v\in I_{m,n}$.
\item[\emph{(iii)}] Finally, we show that when two levels $T^{\v_1}(B_n)$ and 
$T^{\v_2}(B_n)$  of $\mathcal{T}_n$ are associated to the same coset, then $\v_1$ and $\v_2$ are congruent mod $G$.   
\end{enumerate}

{\it{Step (i):}}
Consider the sets $\{ \pi^{-1}(\gamma + G): \gamma + G\in \Z^d/G \}$.  These partition $X$ into sets of equal measure 
$[\Z^d: G]^{-1}$.  Since the levels of the towers generate the $\sigma$-algebra, there exists $N$ such that whenever $m\ge N$, each 
$\pi^{-1}(\gamma + G)$ is well-approximated by some union of levels in $\mathcal{T}_m$.  In other words, there is a union of levels, call it $\Lambda_{m,\gamma}$, such that
\begin{equation}\label{wellapprox}
\mu (\Lambda_{m,\gamma}\,\, \triangle \,\, \pi^{-1}(\gamma + G)\, ) < \eta.
\end{equation}
If we fix $\gamma$ for a moment and let $L$ be one of the levels in $\Lambda_{m,\gamma}$, we can use the fact that every other level 
$\widetilde{L} \notin \Lambda_{m, \gamma}$ equals 
$\T^{\v}(L)$ for some $\v$ to argue that $\widetilde{L} \in \Lambda_{m, \gamma + \v}$.  In other words, we can assume that 
every level in $\mathcal{T}_m$ lies in $\Lambda_{m,\tilde{\gamma}}$ for some $\tilde{\gamma}$.  We will use this to justify the following claim:

\noindent For every level $L\in \mathcal{T}_m$, there exists $\gamma + G \in \Z^d/G$ such that 
\[  
\frac{\mu\left( L \,\, - \,\, \pi^{-1}(\gamma + G)  \right)}{  \mu(L)} <  \tilde{\ep}. 
\]
We prove this claim by contradiction: suppose  there exists a level $L$ such that for all $\gamma + G$, 
\[  
\frac{\mu\left( L \,\, - \,\, \pi^{-1}(\gamma + G)  \right)}{  \mu(L)} \ge  \tilde{\ep}. 
\]
But then for all $\v \in \Z^d$, we also have 
\[  
\frac{\mu\left( \T^{\v}(L) \,\, - \,\, \pi^{-1}(\v + \gamma + G)  \right)}{  \mu(L)} \ge  \tilde{\ep}. 
\]
which we can rewrite as saying that  for every every $\v \in \Z^d$ and $\tilde{\gamma} + G\in \Z^d/G$, we have 
\[
\frac{\mu\left( \T^{\v}(L) - \pi^{-1}(\tilde{ \gamma} + G)  \right)}{  \mu(\T^\v(L))} \ge \tilde{\ep}. 
\]
Let $\gamma$ be such that $L \subseteq \Lambda_{m,\gamma}$.  Using the above, we can say that for this particular $\gamma$ and for every $\widetilde{L} \in \Lambda_{m,\gamma}$,
\begin{equation} \label{wellapprox2}
\mu (\widetilde{L} - \pi^{-1}({ \gamma} + G)) \geq \tilde{\ep} \, \mu(\widetilde{L}).
\end{equation} 
We thus have the following chain of inequalities, which ultimately yield the contradiction $\eta > \eta$:
\begin{align*}
\eta & > \mu (\Lambda_{m,\gamma} \,  \triangle \, \pi^{-1}(\gamma + G)\, ) \qquad \textrm{(from (\ref{wellapprox}) above)}\\
& \geq \mu (\Lambda_{m,\gamma} -  \pi^{-1}(\gamma + G)\, ) \\
& = 
\sum_{\widetilde{L} \in \Lambda_{m,\gamma}} \mu (\widetilde{L} -  \pi^{-1}({ \gamma} + G)) \\
& \geq   \sum_{\widetilde{L} \in \Lambda_{m,\gamma}} \tilde{\ep} \, \mu(\widetilde{L}) \qquad \textrm{(from (\ref{wellapprox2}) above)}\\
& = \tilde{\ep} \, \mu ( \Lambda_{m,\gamma} ) \\
& \geq \tilde{\ep} \, \mu \left( \Lambda_{m,\gamma} \cap \pi^{-1}(\gamma + G)\right) \\
& = \tilde{\ep} \, \left[\mu(\pi^{-1}(\gamma + G)) - \mu(\pi^{-1}\left(\gamma + G) - \Lambda_{m,\gamma}\right) \right] \\
& = \tilde{\ep} \, \left[[\Z^d : G]^{-1} - \mu(\pi^{-1}(\gamma + G) - \Lambda_{m,\gamma}) \right] \\ 
& \geq \tilde{\ep} \, \left[[\Z^d : G]^{-1} - \mu(\pi^{-1}(\gamma + G) \, \triangle \, \Lambda_{m,\gamma}) \right] \\ 
& \geq \tilde{\ep} \, \left[[\Z^d : G]^{-1} - \eta \right] \qquad \textrm{(again using (\ref{wellapprox}))} \\ 
& = \tilde{\ep} \, \left[[\Z^d : G]^{-1} - \frac 12 \tilde{\ep} \, [\Z^d : G]^{-1}\right] \qquad \textrm{(by the definition of $\eta$)} \\
& > \tilde{\ep} \, \left[[\Z^d : G]^{-1} - \frac 12  [\Z^d : G]^{-1}\right] \qquad \textrm{(since $\tilde{\ep} < \ep < 1$)} \\
& = \frac 12 \tilde{\ep} \, [\Z^d : G]^{-1} \\
& = \eta.
\end{align*}
This proves the claim, and finishes the first step of the proof. \\

\vspace{.1in}


{\it{Step (ii):}} Note that the first step says that for  the base level $B_m$ of $\mathcal{T}_m$, where $m\ge N$, there is some coset, which we will denote by $\widetilde{\g}+G$, such that 
\begin{equation} \label{contained}
\frac{\mu\left( B_m \,\, - \,\, \pi^{-1}(\widetilde{\g}+G)  \right)}{  \mu(B_m)} <  \tilde{\ep}. 
\end{equation}
We know that, for any $n\ge m$, $B_m$ is divided into subsets that become various levels in $\mathcal{T}_n$.  Those levels are the sets $\T^{\v}(B_n)$, where $\v\in I_{m,n}$.
We want to say that since (\ref{contained}) holds for $B_m$, then the same inequality with $B_m$ replaced by $\T^{\v}(B_n)$ 
holds for most $\v\in I_{m,n}$.  To this end, define
$$I_{\textrm{bad}} = \{\v \in I_{m,n} : \frac{\mu \left( \T^{\v}(B_n) -   \pi^{-1}(\widetilde{\g}+G)   \right) }{\mu \left( \T^{\v}(B_n) \right) } \ge \tilde{\ep} \}.$$
We will complete this step in the proof by showing that 

\[  \frac{\#(I_{\textrm{bad}})}{\#(I_{m,n})} < 2\tilde{\ep}. \]  
To do this, first note that $\stackbin[\v\in I_{\textrm{bad}}]{}{\bigcup} \T^{\v} (B_n) \subseteq B_m$ and thus 
\[  
\bigcup_{\v\in I_{\textrm{bad}}}  \left( \T^{\v}(B_n) -   \pi^{-1}(\widetilde{\g}+G)  \right) \, \subseteq \,  B_m -  \pi^{-1}(\widetilde{\g}+G)
\]
which implies
\[
\mu \left( \, \bigcup_{\v\in I_{\textrm{bad}}}  \left( \T^{\v}(B_n) -   \pi^{-1}(\widetilde{\g}+G)  \right) \, \right) \,  \le \,  \mu \left( B_m -  \pi^{-1}(\widetilde{\g}+G)  \, \right).
\]
But although $\v \in I_{\textrm{bad}}$ means that 
$\mu \left( \T^{\v}(B_n) -   \pi^{-1}(\widetilde{\g}+G)   \right) /\mu \left( \T^{\v}(B_n) \right)   \ge \tilde{\ep}$, Step (i) assured us there is some 
$\gamma+G \in \Z^d/G$ with 
\[
\frac{\mu \left( \T^{\v}(B_n) -   \pi^{-1}(\gamma+G)   \right)}{\mu \left( \T^{\v}(B_n) \right)}   < \tilde{\ep}. \]
 For this $\gamma + G$, divide $\T^{\v}(B_n)$ into two disjoint subsets: the part in $\pi^{-1}(\gamma+G)$ and the part not in $\pi^{-1}(\gamma+G)$.  Since
\[
\mu \left( \T^{\v}(B_n) - \pi^{-1}(\gamma+G)  \right) < \tilde{\ep} \, \mu\left( \T^{\v}(B_n)  \right), 
\]
we have $\mu \left( \T^{\v}(B_n) \cap \pi^{-1}(\gamma+G)  \right) > (1-\tilde{\ep}) \, \mu\left( \T^{\v}(B_n)  \right)$ and we can then say
\begin{align*}
\mu \left( \T^{\v}(B_n) - \pi^{-1}(\widetilde{\g}+G)  \right)  
& \ge \mu \left( \T^{\v}(B_n) \cap \pi^{-1}(\gamma+G)  \right) \\
& > (1-\tilde{\ep}) \, \mu\left( \T^{\v}(B_n)  \right)\\
& =  (1-\tilde{\ep}) \, \mu(B_n).
\end{align*}
Therefore
\begin{align*}
\mu \left( \bigcup_{\v\in I_{\textrm{bad}}}  \left( \T^{\v}(B_n) -   \pi^{-1}(\widetilde{\g}+G) \right)  \right) & = 
\sum_{\v\in I_{\textrm{bad}}} \mu \left(  \T^{\v}(B_n) -   \pi^{-1}(\widetilde{\g}+G)  \right) \\
& > \sum_{\v\in I_{\textrm{bad}}} (1-\tilde{\ep}) \, \mu(B_n) \\
& = \#(I_{\textrm{bad}}) \cdot (1-\tilde{\ep}) \, \mu(B_n) \\
& =    (1-\tilde{\ep}) \cdot \frac{ \#(I_{\textrm{bad}}) }{\#(I_{m,n})} \,\cdot \, \#(I_{m,n}) \,        \mu(B_n) \\
& > \frac{\#(I_{\textrm{bad}})}{\#(I_{m,n})}  (1-\tilde{\ep}) \mu(B_m) .
 \end{align*}
 Now suppose the claim is false, meaning $\frac{\#(I_{\textrm{bad}})}{\#(I_{m,n})} \ge 2\tilde{\ep}$.  This would mean 
\[
\mu \left(  B_m - \pi^{-1}(\widetilde{\g}+G) \right) \ge \mu \left( \bigcup_{\v\in I_{\textrm{bad}}}  \left( \T^{\v}(B_n) -   \pi^{-1}(\widetilde{\g}+G) \right)  \right)
> \, 2\tilde{\ep}\, (1-\tilde{\ep}) \, \mu(B_m).
\]
But $\tilde{\ep} < \frac{1}{2}$, so $1-\tilde{\ep} > \frac{1}{2}$, and we thus have 
\[
\mu \left(  B_m - \pi^{-1}(\widetilde{\g}+G) \right) > \tilde{\ep} \, \mu(B_m),
\]
contradicting (\ref{contained}).

\vspace{.1in}


{\it{Step (iii):}}  Let $\v \in I_{m,n} - I_{\textrm{bad}}$ and let $\g \in \Z^d$ be such that $\v \equiv \g \mod G$.  Take any other
$\i \in I_{m,n} - I_{\textrm{bad}}$.  Since we have both 
\[
\frac{\mu \left( \T^{\v}(B_n) -   \pi^{-1}(\widetilde{\g}+G)   \right) }{\mu \left( \T^{\v}(B_n) \right) } < \tilde{\ep} \,\, {\textrm{ and }} \frac{\mu \left( \T^{\i}(B_n) -   \pi^{-1}(\widetilde{\g}+G)   \right) }{\mu \left( \T^{\i}(B_n) \right) } < \tilde{\ep},
\]
we can find $x \in  \T^{\v}(B_n) \cap \pi^{-1}(\tilde{\g}+G)$ such that $\T^{\i-\v}(x) \in \T^{\i}(B_n) \cap \pi^{-1}(\tilde{\g}+G)$.
Thus $\pi(x) \in \tilde{\g}+G$ and $\pi (\T^{\i-\v}x ) \in \tilde{\g}+G$.  But
\[
\pi (\T^{\i-\v}x ) = \i - \v + \pi(x) = \i - \v + \tilde{\g}+G.
\]
This implies that $\i - \v \in G$, or $\i \equiv \v$ mod $G$.  So $\i \equiv \g$ mod $G$ as well.  In other words, all $\i \in I_{m,n} - I_{\textrm{bad}}$ are congruent to $\g$ mod $G$ and so 
\[
\frac{\#(\{\i \in I_{m,n} : \i \not \equiv \g \mod G\})}{\#(I_{m,n})} \leq \frac{\#(I_{\textrm{bad}})}{\#(I_{m,n})} < 2 \tilde{\ep} < \ep,\]
as wanted.

\vspace{.2in}

%
(2) $\so$ (1): We are now assuming that for every $\ep$, there exists $N \in \N$ such that for all $n \geq m \geq N$, there is $\g  \in \Z^d$ such that
\begin{equation}  \label{equation21}
\frac{\#(\{\i \in I_{m,n} : \i \not \equiv \g \mod G\})}{\#(I_{m,n})} < \ep.
\end{equation}
We want to show that $G\in \mathcal{FF}(\T) $, i.e. that $(X,\mu,\T)$ factors onto  $(\Z^d/G,  \delta, \tau)$.

For each $i \in \{1,2,3,...\}$, let $\ep_i = \frac{1}{2^{i+1}}$ and apply the assumption to obtain 
 the corresponding $N_i$ and $\g_i$.
Without loss of generality the sequence $\{N_i\}$ can be taken to be increasing, and can be chosen large enough so that the measure of  stage $N_i$-tower
$\mathcal{T}_{N_i}$ is at least $1 - \frac{1}{2^{i+1}}$.  
Also, since $\{F_n\}$ is a F{\o}lner sequence, from Lemma \ref{folnerlemma}, we can assume that for all $i$, $F_{N_i}$  intersects every coset in $\Z^d/G$.

In order to find the wanted factor map from $(X,\mu,\T)$ onto $(\Z^d/G,  \delta, \tau)$, we begin by first defining what might be the most obvious map from $\mathcal{T}_{N_i}$ onto $\Z^d/G$.  As we are assuming the descendents of the base $B_{N_i}$ are mostly equal mod $\g_i+G$, we then consider the set of points where the map on $\mathcal{T}_{N_i+1}$ is not simply the map on 
$\mathcal{T}_{N_i}$ adjusted by this $\g_i+G$, and show this set has small measure.  This will then let us define a factor map on all of $X$ by using the sum of these adjustments.  Now for the details.

We begin by defining, for each $i$, a function $\phi_i : \mathcal{T}_{N_i} \to \Z^d/G$ by setting $\phi_i(x) = \v + G$ whenever $x \in \T^\v(B_{N_i})$.  Observe that since $F_{N_i}$ intersects every coset in $\Z^d/G$, each $\phi_i$ is onto.  Now set
\[
D_i = \{ x\in \mathcal{T}_{N_i} : \phi_{i+1}(x) \neq \phi_i(x) + (\g_i + G)\}.
\]
We want to show that this has small measure. \\

\emph{Claim:}  $\mu(D_i) < \ep_i$. \\

\emph{Proof of claim:} Let $F_{N_i}$ be the shape corresponding to $\mathcal{T}_{N_i}$.  We know most $\w\in I_{N_i,N_{i+1}}$ are such that $\w \equiv \g_i$ mod $G$.  For such $\w$, consider the points at these levels and call their union $B_{\textrm{good}}$.  That is, set
\[
B_{\textrm{good}} = \{x \in \T^\w(B_{N_{i+1}}) : \w\in I_{N_i,N_{i+1}} \textrm{ and } \w \equiv \g_i \mod G\}.
\]
Note $B_{N_i} = B_{\textrm{good}} \, \sqcup \, ( B_{N_i}  - B_{\textrm{good}})$, and observe that for any 
$x \in \stackrel[\v \in F_{N_i}]{}{\bigsqcup} \T^\v(B_{\textrm{good}})$,  $x \in \T^{\v}(\T^{\w}(B_{N_{i+1}})) =  \T^{\v + \w}(B_{N_{i+1}})$.
Thus $\phi_{i+1}(x) = \v+\w+G = \v+\g_i+G = \phi_i(x) + \g_i+G$, and so $x \notin D_i$.  In other words, the set $D_i$ does not include any points from 
$\stackrel[\v \in F_{N_i}]{}{\bigsqcup} \T^\v(B_{\textrm{good}})$.  We can thus say,
\begin{align*}
\mu(D_i)  & \leq \mu \left(\bigsqcup_{\v \in F_{N_i}} \T^\v(B_{N_i} - B_{\textrm{good}}) \right) \\
& = \sum_{\v \in F_{N_i}} \mu\left( \T^\v(B_{N_i} - B_{\textrm{good}}) \right) \\
& = \sum_{\v \in F_{N_i}} \mu \left( B_{N_i} - B_{\textrm{good}}  \right).
\end{align*}
Recall that $B_{N_i}$ is divided into subsets and placed at levels $I_{N_i, N_{i+1}}$ in $\mathcal{T}_{N_{i+1}}$.  
Some of these levels are at heights congruent to $\g_i$ mod $G$, and those would not be included in the set $B_{N_i} - B_{\textrm{good}}$.  
The proportion of the remaining levels in $I_{N_i, N_{i+1}}$ is bounded by (\ref{equation21}), and thus we can say
\[
\mu \left( B_{N_i} - B_{\textrm{good}}  \right)  \le \mu(B_{N_{i+1}}) \,  \ep_i \, \#(I_{N_i, N_{i+1}}) = \mu(B_{N_i}) \,\ep_i .
\]
Putting this together with the above we see that
\[ \mu(D_i)  \leq \#(F_{N_i}) \mu(B_{N_i}) \,\ep_i \le \mu(X)  \,\ep_i = \ep_i, \] completing the proof of the claim.

\vspace{.1in}

We next adjust the functions $\phi_i$ by the amounts $\g_1, \g_2, ...\g_{i-1}$.  More specifically, set $\gamma_i \in \Z^d/G$ to be 
$\gamma_i = \stackrel[j=1]{i}{\sum} (\g_j + G)$.  
Then define, for each $i$, a function $\pi_i : \mathcal{T}_{N_i} \to \Z^2/G$ by setting
\[
\pi_i(x) = \phi_i(x) - \gamma_{i-1}.
\]
Since each $\phi_i$ is surjective, so is each $\pi_i$.  Now observe
\begin{align*}
x \in D_i & \iff \phi_{i+1}(x) \neq \phi_i(x) + (\g_{i} + G) \\
& \iff \phi_{i+1}(x) - (\g_{i} + G) \neq \phi_i(x)  \\
& \iff \phi_{i+1}(x) + \gamma_{i-1} - \gamma_i \neq \phi_i(x)  \\
& \iff \phi_{i+1}(x)  - \gamma_i \neq \phi_i(x) - \gamma_{i-1}\\
& \iff \pi_{i+1}(x) \neq \pi_i(x).
\end{align*}
So 
\begin{equation}
\label{newdefn}
\mu\left(\{x \in \mathcal{T}_{N_i} : \pi_{i+1}(x) \neq \pi_i(x)\}\right) = \mu(D_i) <  \ep_i = \frac 1{2^{i+1}}.
\end{equation} 

Finally, we can define the factor map from $X$ to $\Z^d/G$.  Note that the Borel-Cantelli Lemma says that almost every $x \in X$ belongs to only finitely many of the sets 
$\{x \in \mathcal{T}_{N_i} : \pi_{i+1}(x) \neq \pi_i(x)\}$.  In other words, for almost every $x \in X$ the sequence $\{\pi_i(x)\}$ eventually stabilizes.  We can thus define $\pi : X \to \Z^d/G$ by $\pi(x) = \stackrel[i \to \infty]{}{\lim} \pi_i(x)$.  

It only remains to show that this $\pi$ is indeed a factor map.  To do this, fix $x \in X$ and $\v \in \Z^d$.   Choose $i$ large enough so that $x$ and $\T^\v(x)$ both belong to $\mathcal{T}_{N_i}$, with $x$ at level $\w$ and $\T^\v(x)$ at level $\w + \v$, and that $\pi_i(x) = \pi(x)$ and $\pi_i(\T^\v(x)) = \pi(\T^\v(x))$.  Then
\begin{align*}
\pi(\T^\v(x)) & = \pi_i(\T^\v(x)) \\
& = \phi_i(\T^\v(x)) - \gamma_{i-1} \\
& = (\v + \w + G) - \gamma_{i-1} \\
& = [ \, (\w + G) - \gamma_{i-1}\, ] + (\v + G) \\
& = [\, \phi_i(x) - \gamma_{i-1}\, ] + (\v + G) \\
& = \pi_i(x) + (\v + G) \\
& = \pi(x) + (\v + G) \\
& = \tau^\v(\pi(x))
\end{align*}
so $\pi$ intertwines $\T$ and $\tau$ as desired. 
\end{proof}

We next turn to an example which shows that if one replaces ``F{\o}lner rank one'' with ``stacking rank one'' (or just ``rank one''), the $(2) \so (1)$ direction of Theorem \ref{finitefactorthm} fails. 

%
%

\begin{thm}
\label{stackingnonex}
There exists a $\T : \Z^2 \actson (X,\mu)$ which is stacking rank one and a finite-index subgroup $G \leq \Z^2$ so that for every $\ep > 0$, there exists $N \in \N$ such that for all $n \geq m \geq N$, there is $\g  \in \Z^2$ such that
\begin{equation}
\label{eqnstack}
\frac{\#(\{\i \in I_{m,n} : \i \not \equiv \g \mod G\})}{\#(I_{m,n})} < \ep,
\end{equation}
but $(X,\mathcal{X},\mu,\T)$ does not factor onto $(\Z^2/G, \delta, \tau)$.
\end{thm}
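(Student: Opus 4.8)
The plan is to construct a stacking rank one $\Z^2$-action whose towers' shapes are very long, thin rectangles that fail to be Følner in $\Z^2$, and to exploit this failure to defeat the reconstruction of a factor map carried out in the $(2)\Rightarrow(1)$ direction of Theorem \ref{finitefactorthm}. The target group will be something like $G = 2\Z \times \Z$ (index $2$), and the idea is that the descendant sets $I_{m,n}$ will live inside a single horizontal line (so they are intervals in $\e_1$), making the ``congruent mod $G$'' condition easy to satisfy on most descendants, while the overall action still carries no factor onto $\Z^2/G$.

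First I would set up the cutting-and-stacking construction. Start with a tower of shape $F_1 = \{0,1,\dots,h_1-1\}\times\{0\}$ (a horizontal interval, height one in the $\e_2$-direction), and at each stage pass from $\mathcal{T}_n$ to $\mathcal{T}_{n+1}$ by cutting the base $B_n$ into $r_n$ columns and restacking them, but arranging the new shape $F_{n+1}$ so that it remains extremely elongated in the $\e_1$-direction relative to its extent in the $\e_2$-direction. Concretely, I would let the $\e_1$-width of $F_n$ grow like a rapidly increasing sequence while the $\e_2$-height grows only slowly (or stays bounded, or grows but much more slowly), so that $\#(F_n\triangle(F_n+\e_2))/\#(F_n)$ does not tend to $0$ — i.e.\ $\{F_n\}$ is \emph{not} a Følner sequence. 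The spacers added between columns should be chosen in the $\e_1$-direction only, so that the descendants $I_{m,n}$ sit on a single horizontal line, hence $I_{m,n}\subseteq\Z\e_1$ and the condition $\i\equiv\g\bmod G$ (with $G = 2\Z\times\Z$) becomes a statement about parity of the first coordinate of $\i$. I would choose the cut numbers $r_n$ and spacer patterns so that the descendant intervals $I_{m,n}$ are genuine integer intervals (or unions controlled well enough) whose proportion of odd-first-coordinate points tends to the right value; in fact if the $I_{m,n}$ are intervals, then the proportion of points of either parity is within $1/\#(I_{m,n})$ of $1/2$, so \eqref{eqnstack} — with the appropriate $\g$ — will hold trivially once $\#(I_{m,n})$ is large, which happens because widths grow fast. (One must double-check the quantifiers: \eqref{eqnstack} asks for \emph{one} $\g$ making the bad proportion small, and for an interval one simply picks $\g$ matching the parity of the majority of the interval's points; the fraction is then $\le \lceil\#I/2\rceil/\#I \to 1/2$, which is not $<\ep$. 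So I actually need the $I_{m,n}$ to be concentrated near a single parity, e.g.\ by using spacers of even length so every descendant sits at an even first coordinate, making the bad proportion literally $0$.)

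So the sharper construction is: all spacers inserted at every stage have even $\e_1$-length, and the initial width $h_1$ is even, so that \emph{every} descendant $\i\in I_{m,n}$ has even first coordinate; then \eqref{eqnstack} holds with $\g = \0$ and bad proportion $0$, for all $n\ge m$. Meanwhile, the action does not factor onto $(\Z^2/2\Z\times\Z,\delta,\tau)$: the point is that a factor onto this two-point system is equivalent to a $\{0,1\}$-valued function $f$ with $f(\T^{\e_1}x) = 1 - f(x)$ and $f(\T^{\e_2}x) = f(x)$ a.e., and I must show no such $f$ exists. Here the non-Følner geometry is what kills it — I would argue that because $F_n$ is so long in the $\e_1$-direction, the $\e_1$-name of a generic point over a long tower-level spends almost all its time in one tower, so the would-be parity function is forced to be essentially constant on huge $\e_1$-stretches, contradicting $f(\T^{\e_1}x) = 1-f(x)$; more carefully, I would use that the $\T^{\e_1}$-orbit structure forces $f$ to alternate with period $2$ along $\e_1$, but the tower levels of $\mathcal{T}_n$ are $\T^{\e_1}$-intervals of even length with even-length spacer gaps, so "half" the points at a given $\e_2$-height have $f=0$ and half $f=1$ in a way that is consistent with \emph{no} global measurable choice as $n\to\infty$ because the relevant $\sigma$-algebra (generated by even-indexed tower columns) does not separate the two cosets. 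The cleanest route may be: show directly that any $\T^{\e_1}$-alternating $f$ would have to be $\mathcal{T}_n$-measurable in the limit, but the partition into tower levels always splits each putative coset preimage in half, so $f$ cannot exist; equivalently, $\T^{\e_1}$ restricted to a suitable invariant set is conjugate to a rank-one (hence totally ergodic, or at least ``odometer-free'' in the right way) $\Z$-action with no period-$2$ factor, contradicting the existence of $f$.

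The \textbf{main obstacle} I anticipate is the last step: proving rigorously that the constructed $\T$ has \emph{no} factor onto $(\Z^2/G,\delta,\tau)$, since the combinatorial condition \eqref{eqnstack} is arranged to hold. The heart of the matter is that \eqref{eqnstack} controls the descendants $I_{m,n}\subseteq\Z\e_1$ but says nothing about how tower levels at different $\e_2$-heights are glued, and it is precisely the failure of the Følner condition in the $\e_2$-direction that allows the ``local'' parity consistency to fail to globalize. I would likely prove the non-existence of the factor by a direct argument on $L^2$: a factor onto the two-point system gives an eigenfunction-like object, or a function $f$ with $f\circ\T^{\e_1} = -f$, $f\circ\T^{\e_2} = f$ (writing $f = (-1)^{(\cdot)}$ valued in $\{\pm1\}$); I would show $\int f\,d\mu$ and more refined averages of $f$ over the non-Følner shapes $F_n$ cannot simultaneously satisfy the constraints forced by $\T^{\e_1}$-alternation and $\T^{\e_2}$-invariance, because the non-Følner geometry makes the ergodic averages along $F_n$ behave like averages along the $\e_1$-direction alone — where $f$ alternates and averages to $0$ — yet the $\e_2$-invariance combined with the tower's vertical structure forces a contradiction with $f$ being genuinely $\{\pm1\}$-valued. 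Pinning down this contradiction cleanly, and choosing the growth rates of the widths and heights so that (i) $\{F_n\}$ is provably non-Følner, (ii) all descendants have even first coordinate, and (iii) the rank-one limit $\sigma$-algebra is genuinely all of $\mathcal{X}$, is where the real work lies; the rest is bookkeeping on spacer lengths.
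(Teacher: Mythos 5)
Your proposal leaves its essential step unproved, and this is a genuine gap rather than a deferred technicality. Everything up to and including the verification of \eqref{eqnstack} is routine bookkeeping, as you say; the entire content of the theorem is the non-existence of the factor, and for that you offer only heuristics (``the relevant $\sigma$-algebra does not separate the two cosets,'' ``ergodic averages along $F_n$ behave like averages along the $\e_1$-direction''), none of which is an argument. Worse, with your choice $G = 2\Z\times\Z$ and all descendants of even first coordinate, the natural candidate $f(x) = $ (parity of the first coordinate of $x$'s position in $\mathcal{T}_n$) is stable under refinement and is a very plausible genuine factor map: the Følner hypothesis enters the $(2)\Rightarrow(1)$ direction of Theorem \ref{finitefactorthm} only through the $\e_2$-direction geometry, whereas your obstruction is supposed to live in the $\e_1$-direction, where wide rectangles give you all the control that proof needs. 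There is also a structural reason to worry: if $A$ is any set with $\T^{\e_1}A = A^c \pmod \mu$ (which the FGHSW-type combinatorics you have arranged strongly pushes toward), then commutativity gives $\T^{\e_1}(A \,\triangle\, \T^{\e_2}A) = A\,\triangle\,\T^{\e_2}A$, so when $\T^{\e_1}$ is ergodic this symmetric difference is null or conull, and in the null case the factor onto $\Z^2/(2\Z\times\Z)$ exists after all. You would have to engineer around this, and nothing in the proposal does so.

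The paper's proof avoids all of this by making the non-existence of the factor a one-line observation about freeness. It takes $(X,\T)$ to be the $\Z^2$-odometer $(X_\gothG,\sigma_\gothG)$ with $\gothG = \{2^n\Z\times\Z\}_{n\ge 1}$ and $G = 2\Z\times 2\Z$. Here the towers are the horizontal segments $F_n = \{0,\dots,2^n-1\}\times\{0\}$ (stacking rank one with empty error sets, by Theorem \ref{Ggenerators}), and $I_{m,n} = \{(2^mc,0) : 0\le c < 2^{n-m}\}$, so \eqref{eqnstack} holds with $\g = \0$ and bad proportion exactly zero. But $\e_2 \in \bigcap_n (2^n\Z\times\Z)$, so $\T^{\e_2}$ is the identity on $X$, while $\tau^{\e_2}$ on $\Z^2/G$ is not the identity; hence no equivariant map can exist. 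If you want to salvage your approach, the cheapest fix is to adopt this mechanism: choose $G$ so that some $\v$ acting trivially (or with a known period) on $X$ acts nontrivially on $\Z^2/G$, rather than trying to defeat the factor in the direction where your own descendant condition is doing work.
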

\begin{proof} Our system $(X, \T)$ will be the $\Z^2$-odometer $(X_\gothG, \sigma_\gothG)$ associated to the sequence of subgroups
\[\gothG = \{ 2\Z \times \Z, 4\Z \times \Z, ..., 2^n\Z \times \Z, ...\}.\]
In particular,  $\T^{(0,1)} = \sigma_\gothG^{(0,1)}$ is the identity map for this odometer.  Set $G = 2\Z \times 2\Z$. \\

Given any $\ep > 0$, choose $N = 0$, and for any $n \geq m \geq 0$, let $\g = \0$.  Note that for any $n \geq m \geq 0$, 
$I_{m,n} = \{(2^mc,0) : 0 \leq c < 2^{n-m}\}$ so
\[\#\left(\i \in I_{m,n} : i \not \equiv \0 \mod G\right) = \#(\emptyset) = 0,\]
ensuring (\ref{eqnstack}). \\

Finally, suppose $\pi : X_\gothG \to \Z^2/G$.  Then, for any $x \in X_\gothG$,
\begin{align*}
\pi(\T^{(0,1)}(x)) = \pi(x) \neq \pi(x) + (0,1) = \tau^{(0,1)}(\pi(x))
\end{align*}
so $\pi$ cannot intertwine $\T$ and $\tau$.  Thus no factor map from $(X,\T)$ to $(\Z^2/G, \tau)$ exists.

\end{proof}

%
%
In \cite{FGHSW} the authors note that for rank one $\Z$-actions, the action has a nontrivial finite factor if and only if the action is not totally ergodic.  To put this in the terminology of Theorem \ref{finitefactorthm}, we would say that a $\Z$-action is not totally ergodic if and only if there is a nontrivial finite index $G < \Z$ for which $G\in \mathcal{FF}(\T)$.  Since this is essentially a consequence of the fact that $\Z$ has no nontrivial subgroups of infinite index, it is not a surprise to find that a similar characterization does not hold for $\Z^d$-actions. We make this clear with the next example.

\begin{thm} \label{examplenottotallyergodic} There is a F{\o}lner rank one action $\T : \Z^2 \actson (X,\mu)$ which is not totally ergodic, but does not factor onto any nontrivial finite odometer.
\end{thm}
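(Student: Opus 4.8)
The plan is to take $\T$ to be a Cartesian product of a one-dimensional example with itself. Fix a rank one $\Z$-action $(Y,\nu,S)$ that is weakly mixing — for instance Chacon's transformation — realized by cutting and stacking with interval towers $\mathcal{T}^S_n$ of heights $h_n\to\infty$ and bases $B_n$. Define $\T:\Z^2\actson(Y\times Y,\ \nu\times\nu)$ by $\T^{(a,b)}=S^a\times S^b$. I will argue that (i) $\T$ is F{\o}lner rank one, (ii) $\T$ is not totally ergodic, and (iii) $\FF(\T)=\{\Z^2\}$, so $\T$ factors onto no nontrivial finite odometer.

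For (i), put $F_n=[0,h_n)\times[0,h_n)\subset\Z^2$ and $\widehat B_n=B_n\times B_n$. The sets $\T^{(i,j)}(\widehat B_n)=S^iB_n\times S^jB_n$, for $(i,j)\in F_n$, are pairwise disjoint, so they form a $\T$-tower $\widehat{\mathcal{T}}_n$ of shape $F_n$ whose error set has measure $1-(1-\mu(E^S_n))^2\to 0$. Every measurable rectangle $A\times A'$ is approximated in measure by $L_n\times L'_n$, where $L_n,L'_n$ are unions of levels of $\mathcal{T}^S_n$ supplied by rank one of $S$, and $L_n\times L'_n$ is a union of levels of $\widehat{\mathcal{T}}_n$; since measurable rectangles generate $\mathcal{X}_Y\otimes\mathcal{X}_Y$, this shows $\T$ is rank one for $\{F_n\}$. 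Moreover $\{F_n\}$ is a F{\o}lner sequence in $\Z^2$ because $h_n\to\infty$. (One also sees $\widehat{\mathcal{T}}_{n+1}$ refines $\widehat{\mathcal{T}}_n$, so $\T$ is even stacking rank one, though this is not needed.)

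Property (ii) is immediate: $\T^{(1,0)}=S\times\mathrm{id}_Y$, and any set $Y\times C$ with $0<\nu(C)<1$ is $\T^{(1,0)}$-invariant, so $\T^{(1,0)}$ is not ergodic and hence $\T$ is not totally ergodic. For (iii), suppose toward a contradiction that $\phi$ is a factor map from $(Y\times Y,\T)$ onto $(\Z^2/G,\delta,\tau)$ with $[\Z^2:G]>1$, and pick a nontrivial character $\chi$ of the finite group $\Z^2/G$. Then $f:=\chi\circ\phi$ is nonconstant and satisfies $f\circ\T^\v=\chi(\v)\,f$ for all $\v$, where $\chi$ is now a nontrivial finite-order character of $\Z^2$; in particular $f\circ(S\times\mathrm{id}_Y)=\chi(1,0)\,f$. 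For a.e.\ second coordinate $f$ restricts to an eigenfunction of $S$ with eigenvalue $\chi(1,0)$, so weak mixing of $S$ forces $\chi(1,0)=1$ and $f(y_1,y_2)=g(y_2)$; then $f\circ(\mathrm{id}_Y\times S)=\chi(0,1)\,f$ forces $\chi(0,1)=1$ and $g$ constant, so $\chi$ is trivial — a contradiction. (Alternatively one can run the $(2)\Rightarrow(1)$ direction of Theorem \ref{finitefactorthm} in reverse, computing the descendants of $\widehat B_m$ as $I_{m,n}=I^S_{m,n}\times I^S_{m,n}$ and using total ergodicity of $S$ to show they fail the congruence condition of Theorem \ref{finitefactorthm}(2); the eigenfunction argument is cleaner.)

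The one delicate point is the verification of (i): that the product towers $\widehat{\mathcal{T}}_n$ generate $\mathcal{X}_Y\otimes\mathcal{X}_Y$ and that $\{F_n\}$ is F{\o}lner in $\Z^2$. Both reduce to the corresponding one-dimensional facts about $S$ together with the observation that rectangles generate the product $\sigma$-algebra, so this is the main (and only mildly technical) step; (ii) is trivial and (iii) is soft once one knows the eigenfunctions of a product of weakly mixing systems are constant.
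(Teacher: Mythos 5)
Your construction is exactly the paper's example (the product of Chacon's map with itself, with square product towers), and your verifications of F{\o}lner rank one and of non-total-ergodicity match the paper's. Where you genuinely diverge is the key step, showing $\FF(\T)=\{\Z^2\}$. The paper runs its own machinery: it invokes the combinatorial criterion of Theorem \ref{finitefactorthm}, computes the descendant sets $I_{m,n}=S_{m,n}\times S_{m,n}$ explicitly from the Chacon cutting parameters $h_{m+1}=3h_m+1$, observes that a positive proportion of descendants come in pairs separated by $(h_m,0)$, and deduces $(h_m,0)\in G$ for all $m$, hence $(1,0)\in G$ and symmetrically $(0,1)\in G$. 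You instead give a spectral argument: a factor onto $\Z^2/G$ yields, via a nontrivial character $\chi$, a unimodular $f$ with $f\circ\T^{\v}=\chi(\v)f$, and Fubini plus weak mixing of $S$ kills $\chi$ slice by slice. Your argument is correct (the nonvanishing of $f$ guarantees the slices are genuine eigenfunctions, and root-of-unity eigenvalues are impossible for a weakly mixing map), and it is softer and more general --- it shows that the product of \emph{any} two weakly mixing transformations has no nontrivial finite odometer factor, with no dependence on the cutting parameters. What the paper's route buys is a self-contained illustration of its descendant criterion, using only the combinatorics of the construction rather than the external input that Chacon's map is weakly mixing; your route buys brevity and generality at the cost of importing that spectral fact. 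The only point I would tighten in your write-up is the rank-one verification: approximating a general set in $\mathcal{X}_Y\otimes\mathcal{X}_Y$ requires passing through finite unions of rectangles and a diagonalization over $n$, not just single rectangles, though this is routine.
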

\begin{proof} To construct $\T$, start with Chacon's \cite{Cha2} map $T: \Z \actson (Y,\nu)$  and let $\T$ be the product of $T$ with itself, i.e. $\T : \Z^2 \actson (Y \times Y, \nu \times \nu)$ is $\T^\v(y,y) = \T^{(v_1, v_2)}(y,y) = (T^{v_1}(y), T^{v_2}(y))$.  

$(X,\T)$ is F{\o}lner rank one, with square tower shape $\{0, ..., h_m-1\}^2$ where $h_1 =1$ and  $h_{m+1} = 3h_m+1$ for all $m \geq 1$.  It is clearly not totally ergodic; indeed, for any measurable set $A \subseteq Y$, $A \times Y$ is $\T^{\e_2}$-invariant.

Now suppose that $G \in \FF(\T)$.  We will derive a contradiction by applying Theorem \ref{finitefactorthm}.  To do this, let $\ep < \frac 13$ and observe that for each $m$ and $n$ we can write $I_{m,n} = S_{m,n} \times S_{m,n}$ where $S_{m,m+1} = \{0, h_m, 2h_m+1\}$ and for all $n \geq m+1$,
\[
S_{m,n+1} = S_{m,n} \sqcup \left(S_{m,n} + h_{n}\right) \sqcup \left(S_{m,n} + 2h_{n}+1\right).
\]
Note that $\frac 23$ of the $3^{n}$ elements of $S_{m,m+n}$ come in pairs separated by $h_m$, so $\frac 23$ of the $9^n$ elements of $I_{m,n}$ come in pairs separated by $(h_m,0)$.  Therefore $(h_m,0) \equiv \0 \mod G$, as otherwise, condition (2) of Theorem \ref{finitefactorthm} would be violated.

Consequently,  $(h_m,0) \in G$ for all $m$, and therefore $(h_{m+1},0) = (3h_m+1,0) \in G$ as well.  Therefore $(1,0) = (h_{m+1},0) - 3(h_m,0) \in G$ as well.  A similar argument shows $(0,1) \in G$, so $G = \Z^2$.  This means $\T$ has no (nontrivial) finite factors.
\end{proof}

%
%

\section{Rank one $\Z^d$ systems that factor onto an odometer}

The last section gives us a characterization of when a Folner rank one $\Z^d$-action factors onto a finite odometer.  We now make use of this to describe when such a system factors onto an arbitrary odometer.

%
%

\begin{thm} 
\label{factorexists}
Let $\gothG = \{G_1, G_2, G_3, ...\}$ be a decreasing sequence of finite-index subgroups of $\Z^d$.
Let $\T : \Z^d \actson (X,\mu)$.  $(X,\mu,\T)$ factors onto the odometer $(X_\gothG, \mu_\gothG, \sigma_\gothG)$ if and only if for every $k$, $(X,\mu,\T)$ factors onto the finite action $(\Z^d/G_k, \delta_k, \tau_k)$.
\end{thm}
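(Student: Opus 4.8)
\medskip
\noindent\textbf{Proof plan.}
The forward implication I would dispatch immediately. If $\Phi$ is a factor map from $(X,\mu,\T)$ onto $(X_\gothG,\mu_\gothG,\sigma_\gothG)$, then for each $k$ the coordinate projection $\pi_k\colon X_\gothG\to\Z^d/G_k$ is itself a factor map onto $(\Z^d/G_k,\delta_k,\tau_k)$: it is equivariant by the very definition of $\sigma_\gothG$, and by Theorem~\ref{basicodomprops}(3) it carries $\mu_\gothG$ to $\delta_k$. Since a composition of factor maps is a factor map (as used in the proof of Lemma~\ref{propsofFF}), $\pi_k\circ\Phi$ is the desired factor map onto $(\Z^d/G_k,\delta_k,\tau_k)$.

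For the converse, suppose we are handed factor maps $\psi_k\colon X\to\Z^d/G_k$ for every $k$. The naive candidate $x\mapsto(\psi_1(x),\psi_2(x),\dots)$ need not land in $X_\gothG$, because the $\psi_k$ need not respect the bonding maps: in general $q_k\circ\psi_{k+1}\neq\psi_k$. The plan is to repair this by induction. Set $\widetilde\psi_1=\psi_1$. Assuming $\widetilde\psi_1,\dots,\widetilde\psi_k$ are constructed with $q_j\circ\widetilde\psi_{j+1}=\widetilde\psi_j$ for $j<k$, observe that both $q_k\circ\psi_{k+1}$ and $\widetilde\psi_k$ are factor maps from $(X,\mu,\T)$ onto $(\Z^d/G_k,\delta_k,\tau_k)$, so their pointwise difference $f_k:=(q_k\circ\psi_{k+1})-\widetilde\psi_k$, computed in the group $\Z^d/G_k$, is $\T$-invariant (both maps intertwine $\T$ with $\tau$, so the translation cancels). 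Lift $f_k$ through a set-theoretic section of $q_k$ to a $\T$-invariant function $g_k\colon X\to\Z^d/G_{k+1}$ — measurability is automatic since everything in sight is finite — and put $\widetilde\psi_{k+1}(x):=\psi_{k+1}(x)-g_k(x)$. Subtracting the $\T$-invariant correction $g_k$ preserves equivariance, so $\widetilde\psi_{k+1}$ pushes $\mu$ to a $\tau$-invariant probability measure on the transitive finite $\Z^d$-set $\Z^d/G_{k+1}$, which is forced to be $\delta_{k+1}$; hence $\widetilde\psi_{k+1}$ is again a factor map, and $q_k\circ\widetilde\psi_{k+1}=(q_k\circ\psi_{k+1})-f_k=\widetilde\psi_k$, so the induction continues. (Note this step needs no ergodicity of $X$; only transitivity of the finite quotients is used.)

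With the coherent sequence $\{\widetilde\psi_k\}$ in hand, I would finish by setting $\Psi(x)=(\widetilde\psi_1(x),\widetilde\psi_2(x),\dots)$. The relations $q_k\circ\widetilde\psi_{k+1}=\widetilde\psi_k$ are exactly the compatibility conditions defining $X_\gothG$, so $\Psi$ maps into $X_\gothG$; it is measurable since $\mathcal{X}_\gothG$ is generated by the coordinate maps and each $\pi_k\circ\Psi=\widetilde\psi_k$ is measurable; and it is equivariant because $\sigma_\gothG$ acts coordinatewise and each $\widetilde\psi_k$ is equivariant. Equivariance makes $\Psi_*\mu$ a $\sigma_\gothG$-invariant Borel probability measure on $X_\gothG$, so unique ergodicity of the odometer (Theorem~\ref{basicodomprops}(3)) forces $\Psi_*\mu=\mu_\gothG$, and $\Psi$ is the required factor map. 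The one genuinely delicate point is this inductive correction of the $\psi_k$: one must achieve coherence with the bonding maps while simultaneously keeping the maps equivariant and measure-preserving. Once a coherent sequence is produced, the passage to the inverse limit and the identification of $\Psi_*\mu$ with $\mu_\gothG$ are routine.
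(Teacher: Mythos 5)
Your proposal is correct and follows essentially the same route as the paper: the forward direction by composing with coordinate projections, and the converse by inductively correcting the given factor maps — forming the $\T$-invariant discrepancy $q_k\circ\psi_{k+1}-\widetilde\psi_k$, lifting it through a section of $q_k$, and subtracting — before passing to the inverse limit. Your added verification that the corrected maps still push $\mu$ to the uniform measure (and that $\Psi_*\mu=\mu_\gothG$ by unique ergodicity) makes explicit a point the paper leaves implicit, but the argument is the same.
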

\begin{proof} $(\so)$ Let $\pi : (X,\T) \to (X_\gothG, \sigma_\gothG)$ be a factor map.  Then by composing $\pi$ with each coordinate map $\pi_k : X_\gothG \to \Z^d/G_k$, we obtain factor maps $\pi_k \circ \pi : (X,\T) \to (\Z^d/G_k,\tau_k)$ as wanted. \\

$(\os)$ Suppose that for each $k$, there is a factor map $\pi_k : (X,\T) \to (\Z^d/G_k,  \tau_k)$.
Because $X_\gothG$ is the inverse limit of the $\Z^d/G_k$, if the $\pi_k$ were commensurate with the quotient maps $q_k : \Z^d/G_{k+1} \to \Z^d/G_k$ described in Section 2.3 (so $q_k \circ \pi_{k+1} = \pi_k$ for all $k \geq 1$), then we could easily use these to define a factor map from 
$X$ to $X_\gothG$.  But our given factor maps have no particular relationship to one another and thus we will need to construct other factor maps 
$\psi_k : X \to \Z^d/G_k$ which are commensurate with the quotient maps.  We will do this by first defining a function which, in a sense, keeps track of how far off the given factor maps are from being commensurate with the quotient maps, and then shifting the given factor maps by this amount.  The details are as follows.

Begin by setting $\psi_1 = \pi_1$ and assume that factor maps $\psi_1, ..., \psi_n$ have been so constructed, so that $q_k \circ \psi_{k+1} = \psi_k$ for all $k < n$. Define $\r_{n+1}: X \to \Z^d / G_n$ by
\[    \r_{n+1}(x) = q_n \circ \pi_{n+1}(x) - \psi_n(x).   \]
Note that $\r_{n+1}$ is $\T$-invariant, because for any $\v \in \Z^d$, we have
\begin{align*}
\r_{n+1}(\T^\v(x)) & = q_n \circ \pi_{n+1}(\T^\v(x)) - \psi_n(\T^\v(x)) \\
& = q_n \circ \tau^\v(\pi_{n+1}(x)) - \tau^{\v}(\psi_n(x)) \\
& = q_n \left(\pi_{n+1}(x) + (\v + G_{n+1})\right) - (\psi_n(x) + (\v + G_n)) \\
& = q_n(\pi_{n+1}(x)) + q_n(\v + G_{n+1}) - \psi_n(x) - (\v + G_n) \\
& = q_n(\pi_{n+1}(x)) + (\v + G_n) - \psi_n(x) - (\v + G_n) \\
& = q_n \circ \pi_{n+1}(x) - \psi_n(x) \\
& = \r_{n+1}(x).
\end{align*}
We remark that if $\T$ is ergodic, a situation including all cases where $\T$ has rank one, then $\r_{n+1}$ is constant.  

Next define a map $c_{n+1}:\Z^d/G_n \to \Z^d/G_{n+1}$ which maps cosets to cosets in a way that is consistent with the quotient map.  More specifically, for each $\mathbf{s} \in \Z^d/G_n$, set 
$c_{n+1}(\mathbf{s}) \in \Z^d/G_{n+1}$  to be such that $q_n(c_{n+1}(\mathbf{s})) = \mathbf{s}$.  Use this to define 
$\widehat{\r}_{n+1} : X \to \Z^d/G_{n+1}$ as  
\[ \widehat{\r}_{n+1}(x) = c_{n+1}(\r_{n+1}(x));  \] 
the function $\widehat{\r}_{n+1}$ so defined is measurable and $\T$-invariant (and is therefore constant whenever $\T$ is ergodic) and satisfies $q_n \circ \widehat{\r}_{n+1} = \r_{n+1}$. 

Finally, define $\psi_{n+1} : X \to \Z^d/G_{n+1}$ by
\[\psi_{n+1}(x) = \pi_{n+1}(x) - \widehat{\r}_{n+1}(x).\]
For any $\v \in \Z^d$, we see
\begin{align*}
\psi_{n+1}(\T^\v(x)) & = \pi_{n+1}(\T^\v(x)) - \widehat{\r}_{n+1}(\T^\v(x)) \\
& = \tau_{n+1}^\v(\pi_{n+1}(x)) - \widehat{\r}_{n+1}(x) \\
& = \pi_{n+1}(x) + (\v + G_{n+1}) - \widehat{\r}_{n+1}(x) \\
& = \tau_{n+1}^\v(\, \pi_{n+1}(x) - \widehat{\r}_{n+1}(x)\, ) \\
& = \tau_{n+1}^\v(\psi_{n+1}(x)),
\end{align*}
so $\psi_{n+1}$ intertwines $\T$ and $\tau_{n+1}$.  Next,
\begin{align*}
q_n \circ \psi_{n+1}(x) & = q_n(\pi_{n+1}(x) - \widehat{\r}_{n+1}(x)) \\
& = q_n(\pi_{n+1}(x)) - q_n(\widehat{\r}_{n+1}(x)) \\
& = q_n(\pi_{n+1}(x)) - \r_{n+1}(x) \\
& = q_n \circ \pi_{n+1}(x) - \left[q_n \circ \pi_{n+1}(x) - \psi_n(x)\right] \\
& = \psi_n(x),
\end{align*}
so the maps $\psi_n$ are commensurate with one another.  We can then define a factor map $\psi : X \to X_\gothG$ by $\psi(x) = (\psi_1(x), \psi_2(x), \psi_3(x), ...)$, as wanted.
\end{proof}

%
%

\begin{cor}
\label{rankoneodometerfactor}
Let $\T : \Z^d \actson (X,\mu)$ be F{\o}lner rank one and let $ \sigma_\gothG : \Z^d \actson (X_\gothG, \mu_\gothG)$ be the $\Z^d$-odometer corresponding to the sequence of subgroups $\gothG = \{G_1, G_2, G_3, ...\}$.  Then the following are equivalent:
\begin{enumerate}
\item $(X,\mu,\T)$ factors onto $(X_\gothG, \mu_\gothG,\sigma_\gothG)$.
\item ``For each $j$, eventually most descendants are congruent mod $G_j$.''  This means that for each $j$, and for every $\ep > 0$, there exists $N \in \N$ such that for all $n \geq m \geq N$, there is $\g \in \Z^d$ such that
\[\frac{\#(\{\i \in I_{m,n} : \i \not \equiv \g \mod G_j\})}{\#(I_{m,n})} < \ep.\]
\end{enumerate}
\end{cor}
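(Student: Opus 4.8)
The plan is to deduce Corollary~\ref{rankoneodometerfactor} by combining Theorem~\ref{finitefactorthm} with Theorem~\ref{factorexists}. First I would invoke Theorem~\ref{factorexists}, which applies to any $\Z^d$-action $\T$ (no rank one hypothesis needed): it says that $(X,\mu,\T)$ factors onto $(X_\gothG,\mu_\gothG,\sigma_\gothG)$ if and only if for every $j$ the action factors onto the finite odometer $(\Z^d/G_j,\delta_j,\tau_j)$, i.e.\ if and only if $G_j\in\FF(\T)$ for all $j$. This reduces statement (1) of the corollary to the conjunction over $j$ of the condition ``$G_j\in\FF(\T)$''.

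Next I would apply Theorem~\ref{finitefactorthm}, using the standing assumption that $\T$ is F{\o}lner rank one. For each fixed $j$, the subgroup $G_j$ has finite index in $\Z^d$, so Theorem~\ref{finitefactorthm} gives that $G_j\in\FF(\T)$ if and only if the descendant condition (2) of that theorem holds with $G$ replaced by $G_j$: for every $\ep>0$ there is $N\in\N$ such that for all $n\ge m\ge N$ there is $\g\in\Z^d$ with
\[
\frac{\#(\{\i\in I_{m,n}:\i\not\equiv\g\bmod G_j\})}{\#(I_{m,n})}<\ep.
\]
Conjoining this equivalence over all $j$, statement (1) of the corollary is equivalent to: for every $j$ and every $\ep>0$ there is such an $N$ and $\g$, which is exactly statement (2) of the corollary. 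Writing out both implications explicitly gives the result.

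I do not anticipate a genuine obstacle here, since the corollary is essentially a bookkeeping combination of the two earlier theorems; the only point requiring minor care is the order of quantifiers. In statement (2), the phrase ``for each $j$, and for every $\ep>0$'' must be read so that the $N$ and $\g$ are allowed to depend on both $j$ and $\ep$ (and, as in Theorem~\ref{finitefactorthm}, $\g$ may further depend on $m,n$). Reading it this way, it is literally the pointwise-in-$j$ statement of condition (2) of Theorem~\ref{finitefactorthm}, so no uniformity across $j$ is claimed or needed, and the equivalence goes through verbatim in both directions. I would present the proof as two short paragraphs: ``$(1)\Leftrightarrow G_j\in\FF(\T)$ for all $j$'' by Theorem~\ref{factorexists}, then ``$G_j\in\FF(\T)\Leftrightarrow$ the descendant condition for $G_j$'' by Theorem~\ref{finitefactorthm}, and conclude.
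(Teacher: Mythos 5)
Your proposal is correct and is exactly the paper's argument: the paper proves this corollary by citing Theorem \ref{finitefactorthm} and Theorem \ref{factorexists} as immediate, which is precisely the two-step reduction you spell out. Your added remark about the quantifier order (allowing $N$ and $\g$ to depend on $j$, $\ep$, $m$, $n$) is the right reading and matches the intended statement.
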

\begin{proof} This is immediate from Theorem \ref{finitefactorthm} and Theorem \ref{factorexists}. \end{proof}

\vspace{.1in}

The above lets us say, given a specific $\Z^d$-odometer, whether or not $(X,\mu,\T)$ factors onto it.  We next want a condition that says if there is {\it{some}} $\Z^d$-odometer onto 
which  $(X,\mu,\T)$ will factor.
Since factoring onto a finite odometer is basically taken care of by Theorem \ref{finitefactorthm}, we focus our attention here 
to infinite odometers.  

\vspace{.1in}

We first give a complete description of $\mathcal{FF}(X_\gothG)$.

%
%

\begin{thm}
\label{FFofodoms}
Let $(X_\gothG, \sigma_\gothG)$ be the $\Z^d$-odometer given by $\gothG = \{G_1, G_2, G_3, ...\}$.  Then
\[
\FF(\sigma_\gothG) = \{ H \leq \Z^d : [\Z^d : H] < \infty \textrm{ and } H \geq G_N \textrm{ for some }N\}.\]
\end{thm}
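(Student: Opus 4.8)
The plan is to prove the two inclusions of the claimed identity separately, with the inclusion ``$\supseteq$'' following immediately from earlier results and the inclusion ``$\subseteq$'' being the substantive part, deduced from Theorem~\ref{finitefactorthm}.

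First I would dispatch ``$\supseteq$''. For each $N$, the coordinate map $\pi_N:X_\gothG\to\Z^d/G_N$ satisfies $\pi_N(\sigma_\gothG^\v(x))=\pi_N(x)+(\v+G_N)=\tau^\v(\pi_N(x))$, and by Theorem~\ref{basicodomprops}(3) it pushes Haar measure forward to uniform counting measure on $\Z^d/G_N$; hence $\pi_N$ is a factor map and $G_N\in\FF(\sigma_\gothG)$. Since $\FF(\sigma_\gothG)$ is closed under supergroups by Lemma~\ref{propsofFF}, every finite-index $H\geq G_N$ lies in $\FF(\sigma_\gothG)$, which is all of ``$\supseteq$''.

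For ``$\subseteq$'' I would first record the concrete rank-one structure of the odometer supplied by Theorem~\ref{Ggenerators}(2): a F{\o}lner sequence of rectangular shapes $\{F_n\}$, each $F_n$ a transversal for $\Z^d/G_n$, with towers $\mathcal{T}_n$ of base $B_n=\pi_n^{-1}(\0+G_n)$ and empty error sets. The one computation that matters is the identification of the descendant sets:
\[
I_{m,n}=F_n\cap G_m\qquad(n\geq m),
\]
which follows because $\sigma_\gothG^\i(B_n)=\pi_n^{-1}(\i+G_n)$, and $\pi_n^{-1}(\i+G_n)\subseteq\pi_m^{-1}(\0+G_m)=B_m$ exactly when $\i+G_m=\0+G_m$, i.e.\ when $\i\in G_m$. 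Given a finite-index $H\in\FF(\sigma_\gothG)$, Theorem~\ref{finitefactorthm} says that ``eventually most descendants are congruent mod $H$'', and the goal is to show this forces $G_N\leq H$ for some $N$. I would argue by contradiction: assume $G_j\not\leq H$ for every $j$ and show condition~(2) of Theorem~\ref{finitefactorthm} fails with $\ep=\tfrac13$. Fix an arbitrary $m$ and put $K_m:=G_m\cap H$, a finite-index subgroup of $\Z^d$ that is \emph{properly} contained in $G_m$ (since $G_m\not\leq H$), so $[G_m:K_m]\geq 2$. The assignment $\g+K_m\mapsto\g+H$ is a well-defined injection $G_m/K_m\hookrightarrow\Z^d/H$, so for any $\g\in\Z^d$ the set $\{\i\in I_{m,n}:\i\equiv\g\bmod H\}=F_n\cap G_m\cap(\g+H)$ is either empty or equals $F_n\cap(\g_0+K_m)$ for a single coset $\g_0+K_m$ of $K_m$ in $G_m$. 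Applying Lemma~\ref{folnerlemma} to the F{\o}lner sequence $\{F_n\}$, once with the subgroup $K_m$ and once with $G_m$, gives
\[
\lim_{n\to\infty}\frac{\#(F_n\cap(\g_0+K_m))}{\#(I_{m,n})}
=\frac{\lim_n\#(F_n\cap(\g_0+K_m))/\#(F_n)}{\lim_n\#(F_n\cap G_m)/\#(F_n)}
=\frac{[\Z^d:G_m]}{[\Z^d:K_m]}=\frac{1}{[G_m:K_m]}\leq\frac12 .
\]
Hence for every $\g\in\Z^d$ we have $\liminf_n\#\{\i\in I_{m,n}:\i\not\equiv\g\bmod H\}/\#(I_{m,n})\geq\tfrac12$. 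Because there are only finitely many cosets of $H$ in $\Z^d$, there is some $n\geq m$ for which this proportion exceeds $\tfrac13$ \emph{simultaneously} for all $\g$; as $m$ was arbitrary, no $N$ can satisfy condition~(2) for $\ep=\tfrac13$ --- the desired contradiction. Therefore $H\geq G_N$ for some $N$. (If $\gothG$ is eventually constant, $X_\gothG$ is a finite odometer $\Z^d/G_\infty$, for which the stated description is an elementary fact about equivariant quotients of $\Z^d/G_\infty$, so there is no loss in assuming $X_\gothG$ infinite above.)

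The step I expect to be the main obstacle is precisely the reverse inclusion --- extracting from ``$H\in\FF(\sigma_\gothG)$'' the genuine spreading of $I_{m,n}$ across at least two congruence classes mod $H$. This is where the F{\o}lner property of $\{F_n\}$, via Lemma~\ref{folnerlemma}, is indispensable, and it is exactly the point at which the argument fails for merely stacking rank one systems, as Theorem~\ref{stackingnonex} illustrates. By comparison, setting up the identity $I_{m,n}=F_n\cap G_m$ is routine but must be done carefully, since the rest of the argument rests on it.
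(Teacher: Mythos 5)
Your proof is correct, but for the substantive inclusion $\FF(\sigma_\gothG)\subseteq\{H\leq\Z^d:\ H\geq G_N\textrm{ for some }N\}$ you take a genuinely different route from the paper. The paper argues directly from the rank one structure of Theorem \ref{Ggenerators}: given a factor map $\pi$ onto $\Z^d/\tilde{H}$, it approximates each set $\pi^{-1}(\v+\tilde{H})$ to within $\frac14[\Z^d:\tilde{H}]^{-1}$ by a union $E_\v$ of levels of some tower $\mathcal{T}_N$, observes that $\sigma_\gothG^\g$ fixes every level of $\mathcal{T}_N$ (hence each $E_\v$) whenever $\g\in G_N$, and then a three-term triangle inequality forces $\pi^{-1}(\g+\tilde{H})$ and $\pi^{-1}(\0+\tilde{H})$ to intersect in positive measure, whence $\g\in\tilde{H}$ and $G_N\leq\tilde{H}$ with an explicit $N$ determined by the approximation. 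You instead route the argument through Theorem \ref{finitefactorthm}: you compute the descendant sets explicitly as $I_{m,n}=F_n\cap G_m$ and apply Lemma \ref{folnerlemma} twice (once for $K_m=G_m\cap H$ and once for $G_m$) to show that if no $G_m$ is contained in $H$, then every congruence class mod $H$ asymptotically captures at most half of $I_{m,n}$, contradicting condition (2) of Theorem \ref{finitefactorthm} with $\ep=\frac13$. Both arguments are sound; your density computation and the bookkeeping of the negation of condition (2) check out. The paper's approach is self-contained and avoids invoking Theorem \ref{finitefactorthm} altogether; yours reuses the machinery already built, at the price of an argument by contradiction, the extra identification $I_{m,n}=F_n\cap G_m$, and the need to set aside the eventually constant case of $\gothG$ (where the rectangular shapes from Theorem \ref{Ggenerators} are eventually constant and hence not a F{\o}lner sequence), which you correctly flag and dispose of separately.
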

\begin{proof}  
$(\subseteq)$ First, recall from Theorem \ref{Ggenerators} that $\sigma_\gothG$ is rank one for a sequence of rectangular towers $\{\mathcal{T}_j\}$, where each level of tower $\mathcal{T}_j$ corresponds to a coset $\mod G_j$.  

Now, let $\tilde{H} \in \FF(\sigma_\gothG)$, meaning there is a factor map $\pi : (X_\gothG, \sigma_\gothG) \to (\Z^d/\tilde{H}, \tau)$.  This produces a measurable partition $\mathcal{P} = \{\pi^{-1}(\v + \tilde{H}) : \v + \tilde{H} \in \Z^d/\tilde{H}\}$ of $X_\gothG$, and since $\sigma_\gothG$ is rank one, we can therefore find an $N$ so that for each $\v + \tilde{H} \in \Z^d/\tilde{H}$, there are sets $E_\v$, each a union of levels of tower $\mathcal{T}_N$, so that
\[
\mu_\gothG(E_\v \, \triangle \, \pi^{-1}(\v + \tilde{H})) < \frac 14[\Z^d:\tilde{H}]^{-1} = \frac 14 \mu_\gothG(\pi^{-1}(\v + \tilde{H})).
\]
Now let $\g \in G_N$.  This means that for each $\v + \tilde{H} \in \Z^d/\tilde{H}$, $\sigma_\gothG^\g(E_\v) = E_\v$.
Also,  $\pi^{-1}(\g + \tilde{H}) = \pi^{-1}(\tau^\g(\0 + \tilde{H})) = \sigma_\gothG^\g(\pi^{-1}(\0 + \tilde{H}))$, 
so 
\begin{align*}
\mu_\gothG\left(\pi^{-1}(\g + \tilde{H}) \, \triangle \, \pi^{-1}(\0 + \tilde{H})\right)
& = \mu_\gothG\left(\sigma_\gothG^\g(\pi^{-1}(\0 + \tilde{H})) \, \triangle \, \pi^{-1}(\0 + \tilde{H})\right)\\
& \leq \mu_\gothG\left(\sigma_\gothG^\g(\pi^{-1}(\0 + \tilde{H})) \, \triangle \, \sigma_\gothG^\g(E_0)\right) \\
& \qquad + \mu_\gothG\left(\sigma_\gothG^\g(E_0) \, \triangle \, E_0\right) \\
& \qquad +  \mu_\gothG\left(E_0 \, \triangle \, \pi^{-1}(\0 + \tilde{H})\right) \\
& < \frac 14[\Z^d:\tilde{H}]^{-1} + 0 +  \frac 14[\Z^d:\tilde{H}]^{-1} \\
& = \frac 12 [\Z^d:\tilde{H}]^{-1} \\
& = \frac 12 \mu_\gothG\left(\pi^{-1}(\0 + \tilde{H})\right).
\end{align*}
That means that $\pi^{-1}(\g +\tilde{H})$ and $\pi^{-1}(\0 + \tilde{H})$ intersect in a set of positive measure, which implies that $\g + \tilde{H} = \0 + \tilde{H}$, i.e. $\g \in \tilde{H}$.  We have proven $G_N \leq \tilde{H}$, as wanted. \\

$(\supseteq)$ Let $\tilde{H} \in \{H \leq \Z^d : [\Z^d:H] < \infty \textrm{ and }H \geq G_N \textrm{ for some }N\}$.  As mentioned above we know $G_N \in \FF(\sigma_\gothG)$.  By Lemma \ref{propsofFF}, $\FF(\sigma_\gothG)$ is closed under supergroups, and thus $\tilde{H} \in \FF(\sigma_\gothG)$ as wanted. \end{proof}

Not only does the above describe the set $\FF(\sigma_\gothG)$ associated to an odometer, it can be used to characterize the conjugacy class of the odometer, as described in the next result.

%
%

\begin{cor}
\label{FFdistinguishesodoms}
Let $(X_\gothG, \sigma_\gothG)$ and $(X_\gothH, \sigma_\gothH)$ be $\Z^d$-odometers corresponding to sequences $\gothG = \{G_1, G_2, G_3, ...\}$ and $\gothH = \{H_1, H_2, H_3, ...\}$, respectively.  $(X_\gothG, \sigma_\gothG)$ and $(X_\gothH, \sigma_\gothH)$ are conjugate if and only if $\FF(\sigma_\gothG) = \FF(\sigma_\gothH)$.
\end{cor}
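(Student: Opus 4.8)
The plan is to prove both directions of the equivalence, using Theorem \ref{FFofodoms} as the key tool that translates conjugacy-invariant data into the combinatorial sets $\FF$.

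For the forward direction, suppose $\Phi : (X_\gothG, \sigma_\gothG) \to (X_\gothH, \sigma_\gothH)$ is a conjugacy. First I would observe that conjugacy preserves the property of factoring onto a given finite $\Z^d$-odometer: if $\psi : (X_\gothH, \sigma_\gothH) \to (\Z^d/K, \tau)$ is a factor map, then $\psi \circ \Phi$ is a factor map from $(X_\gothG, \sigma_\gothG)$, and conversely using $\Phi^{-1}$. Hence $\FF(\sigma_\gothG) = \FF(\sigma_\gothH)$ immediately. This direction is essentially formal and should take only a few lines.

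For the reverse direction, assume $\FF(\sigma_\gothG) = \FF(\sigma_\gothH)$. The idea is to use Theorem \ref{FFofodoms} to recover enough of the sequences $\gothG$ and $\gothH$ to build mutually inverse factor maps via Theorem \ref{factorexists}. Concretely, Theorem \ref{FFofodoms} says $G_N \in \FF(\sigma_\gothG)$ for every $N$, so by hypothesis $G_N \in \FF(\sigma_\gothH)$, which by the same theorem means $G_N \geq H_{m(N)}$ for some $m(N)$. Symmetrically, $H_n \geq G_{k(n)}$ for some $k(n)$. Thus the two sequences are cofinal in one another under reverse inclusion, meaning each $X_\gothG$ factors onto every $\Z^d/H_n$ (compose the factor map $X_\gothG \to \Z^d/G_{k(n)}$ with the quotient $\Z^d/G_{k(n)} \to \Z^d/H_n$, which exists since $G_{k(n)} \leq H_n$) and vice versa. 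Then Theorem \ref{factorexists} gives a factor map $X_\gothG \to X_\gothH$ and a factor map $X_\gothH \to X_\gothG$. To conclude these are conjugacies, I would invoke that an odometer (being minimal and uniquely ergodic, cf. Theorem \ref{basicodomprops}) that factors onto another odometer which in turn factors back must do so via isomorphisms — for instance because the two systems have the same (pro-finite) structure, or more directly because a factor map between uniquely ergodic systems of the same "size" in the sense that each factors onto the other forces injectivity; alternatively one checks the induced map on the inverse limits is a topological group isomorphism.

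The main obstacle I anticipate is precisely this last step: producing a factor map in each direction does not automatically yield a conjugacy, since an infinite odometer can factor onto a proper quotient of itself only trivially, but one must argue carefully that mutual factoring implies isomorphism. The cleanest route is probably to note that $X_\gothG = \varprojlim \Z^d/G_j$ and $X_\gothH = \varprojlim \Z^d/H_j$, and that the cofinality established above shows these two inverse systems are cofinal, hence have canonically isomorphic inverse limits as topological groups; the isomorphism automatically intertwines the $\Z^d$-translation actions $\sigma_\gothG$ and $\sigma_\gothH$ and pushes Haar measure to Haar measure, giving the conjugacy. I would present the argument in this inverse-limit language rather than chasing measurable factor maps, as it sidesteps the injectivity issue entirely.
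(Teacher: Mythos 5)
Your proposal is correct and follows essentially the same route as the paper: both directions hinge on Theorem \ref{FFofodoms}, and your cofinality-of-inverse-systems argument for the converse is exactly the fact the paper invokes when it interleaves the two sequences into a single decreasing chain $G_{i_1} \geq H_{i_2} \geq G_{i_3} \geq H_{i_4} \geq \cdots$ and observes that inserting or deleting terms does not change the conjugacy class of the associated odometer. You were also right to abandon the ``mutual factor maps via Theorem \ref{factorexists}'' route, since mutual factoring alone does not yield a conjugacy; the inverse-limit formulation you settle on is precisely what closes that gap.
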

\begin{proof} The $(\so)$ direction is immediate, so we focus on the converse here.  Assume $\FF(\sigma_G) = \FF(\sigma_H)$  and set $i_1 = 1$.  Clearly $G_1 \in \FF(\sigma_\gothG)$ and so $G_1 \in \FF(\sigma_\gothH)$ as well.  By  Theorem \ref{FFofodoms} we then know there is $i_2$ such that $G_1 \geq H_{i_2}$.  As $H_{i_2} \in \FF(\sigma_\gothH) = \FF(\sigma_\gothG)$, we can apply Theorem \ref{FFofodoms} again to obtain $i_3> i_1$ with $H_{i_2} \geq G_{i_3}$.  Continuing in this fashion, we obtain a sequence $\gothG' = \{G_{i_1}, H_{i_2}, G_{i_3}, H_{i_4}, ...\}$ of finite-index subgroups of $\Z^d$ where
\[G_{i_1} \geq H_{i_2} \geq G_{i_3} \geq H_{i_4} \geq \cdots\]
Since inserting or deleting subgroups from a decreasing sequence of subgroups defining an odometer does not change its conjugacy class, we see $(X_{\gothG'}, \sigma_{\gothG'})$ is conjugate to both $(X_\gothG, \sigma_\gothG)$ and $(X_\gothH, \sigma_\gothH)$, and therefore $(X_\gothG, \sigma_\gothG)$ and $(X_\gothH, \sigma_\gothH)$ are conjugate to one another, as desired. 
\end{proof}

We now have a way to associate to an odometer $(X_\gothG, \sigma_\gothG)$ the set $\FF(\sigma_\gothG)$ of finite index subgroups of $\Z^d$, closed under supergroups and intersections.  We next want to show the converse: given such a set of subgroups, one can construct an associated odometer.  This will be shown by the next lemma which also gives a characterization of when the associated odometer is infinite.

%
%

\begin{lemma} 
\label{odometergeneration}
Let $\mathcal{F}$ be a collection of finite-index subgroups of $\Z^d$ that is closed under supergroups and intersections.  Then:
\begin{enumerate}
\item There is an odometer $(X_\gothG, \sigma_\gothG)$, unique up to conjugacy, such that $\F = \mathcal{FF}(\sigma_\gothG)$.
\item If $\mathcal{F}$ is infinite, then $(X_\gothG, \sigma_\gothG)$ is infinite.
\item If $\stackbin[G \in \F]{}{\bigcap} G = \{\0\}$, then $(X_\gothG, \sigma_\gothG)$ is free.
\end{enumerate}
\end{lemma}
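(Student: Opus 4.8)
\textbf{Proof plan for Lemma \ref{odometergeneration}.}

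The plan is to build a decreasing sequence $\gothG = \{G_1, G_2, G_3, \ldots\}$ out of the members of $\mathcal{F}$ by a diagonal enumeration, check that $\mathcal{FF}(\sigma_\gothG) = \mathcal{F}$, and then read off the claims about infiniteness and freeness. First I would observe that a finite-index subgroup $H$ of $\Z^d$ has only finitely many supergroups (they all contain $H$ and each corresponds to a subgroup of the finite group $\Z^d/H$), and moreover the intersection of all members of $\mathcal{F}$ that contain a \emph{fixed} $H \in \mathcal{F}$ is itself a finite-index subgroup lying in $\mathcal{F}$ (using closure under intersections — a priori an infinite intersection, but it stabilizes since indices are bounded below, or equivalently it equals a finite sub-intersection). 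Enumerate $\mathcal{F} = \{H_1, H_2, H_3, \ldots\}$ (if $\mathcal{F}$ is finite, repeat the last term, or just stop — handle that case separately, where the resulting odometer is the finite odometer $\Z^d/(\bigcap_{H\in\F}H)$). Then define $G_k = H_1 \cap H_2 \cap \cdots \cap H_k$. Each $G_k$ has finite index, $G_k \geq G_{k+1}$, and $G_k \in \mathcal{F}$ by closure under intersections, so $\gothG$ is a legitimate defining sequence for an odometer.

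Next I would verify $\mathcal{FF}(\sigma_\gothG) = \mathcal{F}$ using Theorem \ref{FFofodoms}, which identifies $\mathcal{FF}(\sigma_\gothG)$ with the set of finite-index $H \leq \Z^d$ containing some $G_N$. For the inclusion $\mathcal{F} \subseteq \mathcal{FF}(\sigma_\gothG)$: given $H_k \in \mathcal{F}$, we have $H_k \geq G_k$ by construction, so $H_k \in \mathcal{FF}(\sigma_\gothG)$. For the reverse inclusion: if $H$ is finite-index and $H \geq G_N$ for some $N$, then since $G_N \in \mathcal{F}$ and $\mathcal{F}$ is closed under supergroups, $H \in \mathcal{F}$. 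This gives (1); uniqueness up to conjugacy is exactly Corollary \ref{FFdistinguishesodoms}, since two odometers with the same $\mathcal{FF}$ are conjugate.

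For (2), suppose toward a contradiction that $(X_\gothG, \sigma_\gothG)$ is finite, i.e. $\gothG$ is eventually constant, say $G_k = G_\infty := \bigcap_j G_j$ for all large $k$. Then by Theorem \ref{FFofodoms}, $\mathcal{FF}(\sigma_\gothG)$ is exactly the set of finite-index supergroups of $G_\infty$, which is \emph{finite} (supergroups of $G_\infty$ biject with subgroups of the finite group $\Z^d/G_\infty$). But $\mathcal{FF}(\sigma_\gothG) = \mathcal{F}$ is assumed infinite — contradiction; so $(X_\gothG, \sigma_\gothG)$ must be infinite. For (3), assume $\bigcap_{G \in \mathcal{F}} G = \{\0\}$; by Theorem \ref{basicodomprops}(2), freeness of $(X_\gothG, \sigma_\gothG)$ is equivalent to $\bigcap_{j} G_j = \{\0\}$. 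Since every $G_j$ lies in $\mathcal{F}$, we get $\bigcap_j G_j \supseteq \bigcap_{G \in \mathcal{F}} G = \{\0\}$ in one direction; for the other, each $H \in \mathcal{F}$ equals $H_k$ for some $k$ and hence $H \supseteq G_k \supseteq \bigcap_j G_j$, so $\bigcap_{G \in \mathcal{F}} G \supseteq \bigcap_j G_j$, giving equality $\bigcap_j G_j = \{\0\}$ and thus freeness.

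I expect the main obstacle to be bookkeeping around the degenerate case where $\mathcal{F}$ is finite (so the odometer is a finite odometer and one must confirm $\mathcal{FF}$ of a finite odometer $\Z^d/G_\infty$ really is all of $\mathcal{F}$ rather than something larger — which again reduces to the supergroup/subgroup bijection for $\Z^d/G_\infty$) and making sure the enumeration-and-intersection construction is stated cleanly; the substantive content is entirely carried by Theorem \ref{FFofodoms}, Corollary \ref{FFdistinguishesodoms}, and Theorem \ref{basicodomprops}(2), so the argument is short once those are invoked.
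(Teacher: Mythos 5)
Your proposal is correct and takes essentially the same approach as the paper: the same enumeration-and-partial-intersection construction of $\gothG$ (with the same handling of the finite case by repeating the last term), the same two inclusions verified via Theorem \ref{FFofodoms} together with closure under intersections and supergroups, and the same reading-off of parts (2) and (3). The only cosmetic difference is that you prove (2) by contradiction through the finiteness of $\FF$ of a finite odometer, whereas the paper observes directly that an infinite $\F$ forces $\{G_k\}$ to be non-eventually-constant --- these rest on the same fact that a finite-index subgroup has only finitely many supergroups.
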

\begin{proof} 
As there are only countably many subgroups of $\Z^d$ of finite index, $\mathcal{F}$ must be countable, and so we can write 
$\mathcal{F} = \{H_1, H_2, H_3, ...\}$.  For each $k \geq 1$, let $G_k = \cap_{j=1}^k H_j$.  If  
$\F$ is finite and has, say, $t$ members, then for every $k > t$ set $G_k = G_t$.
This gives a decreasing sequence 
$\gothG = \{G_1, G_2, G_3, ..\}$ of finite index subgroups of $\Z^d$ yielding a corresponding odometer 
$(X_\gothG, \sigma_\gothG)$. 

To show $\F = \mathcal{FF}(\sigma_\gothG)$, let $H\in \F$.  Then $H = H_i$ for some $i$ and thus $G_k\le H$ for any $k\ge i$.  From Theorem \ref{FFofodoms} it follows that 
$H \in \FF(\sigma_\gothG)$.  Next, take  $G \in \FF(\sigma_\gothG)$.
By Theorem \ref{FFofodoms}, $G$ is a supergroup of some $G_k$.  Because $\F$ is closed under intersections and
$G_k = \cap_{j=1}^k H_j$, we know $G_k \in \F$.  Using that $\F$ is also closed under supergroups, we have $G \in \F$ as well, completing the proof of the first statement.  

If $\F$ is infinite, the sequence $\{G_k\}$ is not eventually constant, so the odometer $X_\gothG$ is infinite.  
If $\stackbin[G \in \F]{}{\bigcap} G = \stackbin[k]{}{\bigcap} H_k = \{\0\}$, then $\stackbin[k]{}{\bigcap} \, G_k = \{\0\}$ as well, so $(X_\gothG, \sigma_\gothG)$ is free (see Theorem \ref{basicodomprops}).
\end{proof}

The odometer $(X_\gothG, \sigma_\gothG)$ constructed in the above result will be called the {\bf{odometer generated by }} $\F$. \\

We now turn to the connection between the set $\FF(\T)$ and the kinds of odometer actions onto which $(X,\T)$ can factor:

%
%
\begin{lemma} 
\label{infodomfactorexists}
Let $(X,\T)$ be a measure-preserving $\Z^d$-system.  Then:
\begin{enumerate}
\item $(X,\T)$ factors onto an infinite $\Z^d$-odometer if and only if $\FF(\T)$ is infinite.
\item $(X,\T)$ factors onto a free $\Z^d$-odometer if and only if $\stackbin[G \in \FF(\T)]{}{\bigcap} G = \{\0\}$.
\end{enumerate}
\end{lemma}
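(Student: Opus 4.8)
The plan is to read off both equivalences from the results already assembled, the key players being Theorem~\ref{factorexists}, Theorem~\ref{FFofodoms}, Lemma~\ref{propsofFF} and Lemma~\ref{odometergeneration}. The organizing observation is that, by Theorem~\ref{factorexists}, for a decreasing sequence $\gothG = \{G_1, G_2, G_3, \ldots\}$ of finite-index subgroups the system $(X,\T)$ factors onto $(X_\gothG, \sigma_\gothG)$ precisely when $G_k \in \FF(\T)$ for every $k$; and by Lemma~\ref{propsofFF} the set $\FF(\T)$ is always closed under supergroups and intersections, so Lemma~\ref{odometergeneration} applies with $\mathcal{F} = \FF(\T)$. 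I will also use that an odometer $(X_\gothG,\sigma_\gothG)$ is infinite exactly when its defining sequence $\gothG$ is not eventually constant, and is free exactly when $\bigcap_j G_j = \{\0\}$ (Theorem~\ref{basicodomprops}).

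For (1): if $(X,\T)$ factors onto an infinite $\Z^d$-odometer $(X_\gothG, \sigma_\gothG)$, then $\gothG$ is not eventually constant, so $\{G_k : k \geq 1\}$ contains infinitely many distinct subgroups, each lying in $\FF(\T)$ by Theorem~\ref{factorexists}; hence $\FF(\T)$ is infinite. Conversely, if $\FF(\T)$ is infinite, apply Lemma~\ref{odometergeneration} to $\mathcal{F} = \FF(\T)$ to produce an odometer $(X_\gothG, \sigma_\gothG)$ with $\FF(\sigma_\gothG) = \FF(\T)$; it is infinite by part~(2) of that lemma, and its defining subgroups $G_k$ are finite intersections of members of $\FF(\T)$, hence again members of $\FF(\T)$, so Theorem~\ref{factorexists} supplies the factor map $(X,\T) \to (X_\gothG,\sigma_\gothG)$.

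For (2): if $(X,\T)$ factors onto a free $\Z^d$-odometer $(X_\gothG,\sigma_\gothG)$, then $\bigcap_j G_j = \{\0\}$ by Theorem~\ref{basicodomprops}, and since each $G_j \in \FF(\T)$ by Theorem~\ref{factorexists} we get $\bigcap_{G \in \FF(\T)} G = \{\0\}$. Conversely, assuming $\bigcap_{G \in \FF(\T)} G = \{\0\}$, the odometer generated by $\mathcal{F} = \FF(\T)$ (which exists by Lemma~\ref{odometergeneration}) is free by part~(3) of that lemma, and is a factor of $(X,\T)$ by exactly the argument used in (1). It is worth remarking that $\bigcap_{G \in \FF(\T)} G = \{\0\}$ already forces $\FF(\T)$ to be infinite, since every finite-index subgroup of $\Z^d$ contains $n\Z^d$ for $n$ its index, so finite intersections of members of $\FF(\T)$ are nontrivial; this is consistent with the fact that a free odometer is infinite.

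I do not anticipate a genuine obstacle: all the substantive work lives in the earlier results. The only care required is bookkeeping in the two converse directions --- verifying that the specific generating sequence $\gothG$ output by Lemma~\ref{odometergeneration} really consists of elements of $\FF(\T)$, so that Theorem~\ref{factorexists} can be invoked --- together with the elementary observation identifying ``infinite odometer'' with ``defining sequence not eventually constant''.
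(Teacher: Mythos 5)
Your proposal is correct and follows essentially the same route as the paper: both directions of each equivalence are read off from Theorem \ref{factorexists}, Theorem \ref{basicodomprops} and Lemma \ref{odometergeneration} applied to $\mathcal{F} = \FF(\T)$, with closure under intersections (Lemma \ref{propsofFF}) guaranteeing that the generating subgroups of the constructed odometer lie in $\FF(\T)$. The only difference is cosmetic (the paper deduces $G_k \in \FF(\T)$ from $\FF(\sigma_\gothG)=\FF(\T)$ rather than from the intersection closure directly), and your closing remark that $\bigcap_{G\in\FF(\T)}G=\{\0\}$ forces $\FF(\T)$ to be infinite is a correct, harmless addition.
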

\begin{proof} 
We begin with statement (1).  Suppose first that $(X,\T)$ factors onto the infinite $\Z^d$-odometer $(X_\gothG, \sigma_\gothG)$ associated to the sequence $\gothG = \{G_1, G_2, G_3, ...\}$.  It follows that for every $k$, $(X,\T)$ factors onto $\Z^d/G_k$, i.e. $G_k \in \FF(\T)$.  As $\sigma_\gothG$ is infinite, there must be infinitely many different $G_k$, so $\FF(\T)$ is infinite, as wanted.

Conversely, if $\FF(\T)$ is infinite, then by Lemma \ref{odometergeneration} we can let $(X_\gothG, \sigma_\gothG)$ with 
$\gothG = \{G_1, G_2, G_3, ...\}$ be the infinite odometer generated by $\FF(\T)$.   Then  $\FF(\T) = \FF(X_\gothG)$ and thus $G_k\in \FF(\T)$ for every $k$.  By definition, that means $(X,\T)$ factors onto every $(\Z^d/G_k, \tau)$ and so by Theorem \ref{factorexists}$,(X,\T)$ must also factor onto $(X_\gothG, \sigma_\gothG)$ as wanted. \\

Now for statement (2).  Suppose first that $(X,\T)$ factors onto the free $\Z^d$-odometer $(X_\gothG, \sigma_\gothG)$ associated to the sequence $\gothG = \{G_1, G_2, G_3, ...\}$.  As before, we have $G_k \in \FF(\T)$ for every $k$, which implies $\stackbin[G \in \FF(\T)]{}{\bigcap} G \subseteq \bigcap_k G_k = \{\0\}$.

Conversely, assume $\stackbin[G \in \FF(\T)]{}{\bigcap} G = \{\0\}$.  
Use Lemma \ref{odometergeneration} to obtain a free odometer $(X_\gothG, \sigma_G)$ where $\gothG = \{G_1, G_2, G_3, ...\}$ and $\FF(\sigma_G) = \FF(\T)$.
Since each $G_k\in\FF(\sigma_G)$, we then have that  $(X,\T)$ factors onto every $(\Z^d/G_k, \tau)$.  Thus  by Theorem \ref{factorexists} $(X,\T)$ must also factor onto $(X_\gothG, \sigma_G)$, as wanted. 
\end{proof}

We can now characterize when, given a F{\o}lner rank one $\Z^d$ system, there is some infinite odometer on which it will factor.

%
%

\begin{thm}
\label{someodometerfactor}
Let $\T : \Z^d \actson (X, \mu)$ be F{\o}lner rank one.  The following are equivalent:
\begin{enumerate}
\item $(X,\mu,\T)$ factors onto some infinite odometer.
\item For all $j \in \N$, there is a finite-index subgroup $G_j$ with $[\Z^d : G_j] \geq j$ so that for all $\ep > 0$, there exists $N \in \N$ for all $n \geq m \geq N$, there is $\g \in \Z^d$ such that
\[\frac{\#(\{\i \in I_{m,n} : \i \not \equiv \g  \mod G_j\})}{\#(I_{m,n})} < \ep.\]
\end{enumerate}
\end{thm}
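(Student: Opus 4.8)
The plan is to deduce this theorem purely by assembling earlier results: Lemma \ref{infodomfactorexists}(1) and Theorem \ref{finitefactorthm}, together with the elementary fact that $\Z^d$ has only finitely many subgroups of each fixed finite index.

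First I would observe that, by Lemma \ref{infodomfactorexists}(1), statement (1) is equivalent to the assertion that $\FF(\T)$ is infinite. So it suffices to show that $\FF(\T)$ is infinite if and only if statement (2) holds. The next ingredient is the standard combinatorial remark that for every $n \in \N$ the group $\Z^d$ has only finitely many subgroups of index $n$: any such subgroup contains $n\Z^d$ and hence corresponds to a subgroup of the finite group $\Z^d/n\Z^d$, of which there are finitely many. Consequently a collection of finite-index subgroups of $\Z^d$ is infinite precisely when it contains subgroups of arbitrarily large index; applied to $\FF(\T)$, this says $\FF(\T)$ is infinite if and only if for every $j \in \N$ there is $G_j \in \FF(\T)$ with $[\Z^d : G_j] \geq j$.

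Finally I would invoke Theorem \ref{finitefactorthm}, which characterizes membership $G \in \FF(\T)$ by exactly the quantified ``eventually most descendants are congruent mod $G$'' condition, i.e. for every $\ep > 0$ there is $N$ so that for all $n \geq m \geq N$ there is $\g \in \Z^d$ with $\#(\{\i \in I_{m,n} : \i \not\equiv \g \bmod G\})/\#(I_{m,n}) < \ep$. Substituting this characterization into the previous paragraph, ``for every $j$ there is $G_j \in \FF(\T)$ with $[\Z^d:G_j]\geq j$'' becomes verbatim statement (2). Chaining the equivalences — (1) $\iff$ $\FF(\T)$ infinite $\iff$ $\FF(\T)$ contains subgroups of unbounded index $\iff$ (2) — completes the proof. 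Concretely, for (1) $\Rightarrow$ (2) one extracts the $G_j$ from the infinitude of $\FF(\T)$ and applies Theorem \ref{finitefactorthm}; for (2) $\Rightarrow$ (1) one uses Theorem \ref{finitefactorthm} to place each $G_j$ in $\FF(\T)$, notes the indices are unbounded so $\FF(\T)$ is infinite, and applies Lemma \ref{infodomfactorexists}(1).

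I do not expect a genuine obstacle here; the argument is bookkeeping that glues together the machinery already developed. The only step not already contained in the excerpt is the (routine) finiteness of the set of index-$n$ subgroups of $\Z^d$, and the one point worth stating carefully is that this finiteness is precisely what allows the ``infinitely many subgroups'' of Lemma \ref{infodomfactorexists} to be rephrased as the ``index $\geq j$ for all $j$'' formulation used in statement (2).
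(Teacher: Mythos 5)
Your proposal is correct and follows essentially the same route as the paper: both directions rest on Theorem \ref{finitefactorthm} to identify condition (2) with membership in $\FF(\T)$ and on Lemma \ref{infodomfactorexists}(1) to convert infinitude of $\FF(\T)$ into an infinite odometer factor. Your explicit use of the fact that $\Z^d$ has only finitely many subgroups of each finite index (so that an infinite $\FF(\T)$ must contain subgroups of unbounded index) is a small but welcome addition: the paper's $(1)\Rightarrow(2)$ direction leaves the verification of the clause $[\Z^d:G_j]\geq j$ implicit, whereas your argument pins it down.
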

\begin{proof} 
(1) $\so$ (2): Assume $(X,\mu,\T)$ factors onto some infinite odometer $(X_\gothG, \sigma_\gothG)$ where $\gothG = \{G_1, G_2,...\}$.  Composing the factor map with the projection onto the $j^{th}$ coordinate, Theorem \ref{finitefactorthm} yields the result in (2).

(2) $\so$ (1):  
By Theorem \ref{finitefactorthm}, $G_j \in \FF(\T)$ for every $j$.  Since $[\Z^d : G_j] \geq j$,  $\FF(\T)$ must be  infinite, and (1)  follows from Lemma \ref{infodomfactorexists}.  
 \end{proof}

%
%

\section{Rank one $\Z^d$ systems conjugate to an odometer}

In this section we explore when a F{\o}lner rank one $\Z^d$-system not only factors onto some infinite odometer but in fact is conjugate to that odometer.  We first consider the situation of when the F{\o}lner rank one system and the odometer are given, and one wants to check if they are conjugate.  
This is followed by a theorem that considers how one can check, given the cutting and stacking procedure that generates a F{\o}lner rank one system, if there is {\it{any}} odometer to which it is conjugate.  

Both of these theorems will involve two properties which, when taken together, give necessary and sufficient conditions on a F{\o}lner rank one $\Z^d$-action to guarantee the wanted conjugacy. For instance, consider the two properties listed in Theorem \ref{conj2odom} below.  
Note that condition (2b) is familiar to us from the proceeding sections:  one can think of (2b) as saying that eventually, the F{\o}lner rank one $\Z^d$-action mimics the group structure of the odometer.  
 More specifically, it requires that for each subgroup $G_j$ in the odometer structure, eventually, most descendants of every tower in the rank one system are congruent$\mod G_j$.
Importantly, condition (2b) does not depend at all on how the descendants of the first few towers in the cutting and stacking construction behave.  In order to achieve a conjugacy between a F{\o}lner rank one action and an odometer, it is necessary that \emph{every} tower is stacked in a way that does not deviate too much from the structure of the odometer.  This is captured with condition (2a), which says that eventually, the descendants of the base of each stage $l$ tower mostly coincide with a single subset of some $\Z^d/G_k$.

%
%
\begin{thm}
\label{conj2odom}
Let $\T : \Z^d \actson (X,\mu)$ be F{\o}lner rank one and let $\sigma_\gothG : \Z^d \actson (X_\gothG, \mu_\gothG)$ be the $\Z^d$-odometer corresponding to the sequence of subgroups $\gothG = \{G_1, G_2, G_3, ...\}$.  The following are equivalent:
\begin{enumerate}
\item $(X,\T)$ is conjugate to $(X_\gothG, \sigma_\gothG)$.
\item Both of the following statements hold:
\begin{enumerate}
\item For all $l \in \N$ and all $\ep > 0$, there exists $k \in \N$ and $N \in \N$ such that for all $ m \geq N$, there is $D \subseteq \Z^d/G_k$ such that
\[
\frac{\#\left(I_{l,m} \, \triangle \, \{\i \in F_m : \i + G_k \in D\} \right)}{\#(I_{l,m})} < \ep.
\]
\item For each $j$, and for every $\ep > 0$, there exists $N \in \N$ such that for all $n \geq m \geq N$, there is $\g  \in \Z^d$ such that
\[
\frac{\#(\{\i \in I_{m,n} : \i \not \equiv \g \mod G_j\})}{\#(I_{m,n})} < \ep.
\]
\end{enumerate}
\end{enumerate}
\end{thm}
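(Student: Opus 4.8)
The plan is to prove the two implications separately, using the machinery already established. For the direction $(1) \so (2)$, suppose $\Phi : (X,\T) \to (X_\gothG, \sigma_\gothG)$ is a conjugacy. Condition (2b) is immediate: a conjugacy is in particular a factor map, so Corollary \ref{rankoneodometerfactor} applied to $\sigma_\gothG$ gives (2b) for free. The substance is (2a). Here I would use that $\sigma_\gothG$ has, by Theorem \ref{Ggenerators}, a concrete rank one structure by rectangular towers $\{\mathcal{S}_k\}$ whose levels are exactly the cosets $\mod G_k$. Fix $l$ and $\ep$. The base $B_l$ of $\mathcal{T}_l$ pushes forward under $\Phi$ to a set $\Phi(B_l) \subseteq X_\gothG$ of measure $\mu(B_l)$; since the towers $\mathcal{S}_k$ generate the Borel $\sigma$-algebra of $X_\gothG$, for $k$ large this set is approximated (up to measure $\ep \, \mu(B_l) / (\text{something})$) by a union of levels of $\mathcal{S}_k$, i.e. by $\pi_k^{-1}(D)$ for some $D \subseteq \Z^d/G_k$. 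Pulling this back through the conjugacy and through the fact that both systems are stacking rank one, a descendant $\i \in I_{l,m}$ corresponds to a level $\T^\i(B_m) \subseteq B_l$, and the question of whether $\i + G_k \in D$ is, up to the approximation error, the question of whether $\T^\i(B_m) \subseteq \Phi^{-1}(\pi_k^{-1}(D))$. A counting argument — essentially the same bookkeeping as Step (ii) of the proof of Theorem \ref{finitefactorthm}, comparing the measure $\mu\big(\bigcup_{\i \in I_{l,m} \,\triangle\, \{\i : \i + G_k \in D\}} \T^\i(B_m)\big)$ against the approximation error — upgrades the measure-level approximation to the combinatorial statement (2a), for all $m \geq N$ once $N$ is large enough that $\mathcal{T}_m$ is fine enough and has large measure.

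For the direction $(2) \so (1)$, the plan is: (2b) together with Corollary \ref{rankoneodometerfactor} already yields a factor map $\pi : (X,\T) \to (X_\gothG, \sigma_\gothG)$. It remains to show that, given (2a), this (or some) factor map is almost surely injective. I would show injectivity by proving that the partition into preimages of points of $X_\gothG$ — equivalently the partition into the sets $\bigcap_k \pi_k^{-1}(\x_k)$ — is the point partition mod $\mu$. Concretely: the levels $\{\T^\i(B_l) : l \in \N,\ \i \in F_l\}$ generate $\mathcal{X}$ since $\T$ is rank one; so it suffices to show that each such level $\T^\i(B_l)$ agrees, up to a null set, with a set that is measurable with respect to $\pi$ (i.e. of the form $\pi^{-1}(\text{Borel})$). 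By $\T$-equivariance it is enough to handle the bases $B_l$. Condition (2a), pushed through the factor map, says exactly that $B_l$ is well-approximated at every late stage by a set of the form $\pi_k^{-1}(D) = \pi^{-1}(\pi_k'^{-1}(D))$ where $\pi_k'$ is the $k$-th coordinate projection on $X_\gothG$; taking $\ep \to 0$ along a sequence and using completeness of $L^1$, $B_l$ is $\pi$-measurable up to null sets. Since the $\pi$-measurable sets then contain a generating algebra of $\mathcal{X}$, $\pi$ separates points a.e., so $\pi$ is a conjugacy.

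The main obstacle I anticipate is the bookkeeping in the $(1)\so(2a)$ direction: translating a purely measure-theoretic approximation of $\Phi(B_l)$ by unions of odometer levels into the purely combinatorial count in (2a), uniformly over \emph{all} $m \geq N$ rather than just along a subsequence. The key points that make this go through are (i) that both systems are stacking, so the descendant sets $I_{l,m}$ of the rank one system and the coset-refinement structure of the odometer are genuinely nested and compatible under the factor map, and (ii) Lemma \ref{folnerlemma}, which guarantees that each $F_m$ meets every coset of $G_k$ in roughly the right proportion, so that no single coset can carry a disproportionate share of the miscounting. A secondary subtlety is that the factor map produced by Corollary \ref{rankoneodometerfactor} need not literally be $\Phi$; one must check either that (2a) is insensitive to which factor map onto $(X_\gothG,\sigma_\gothG)$ is used (any two differ by an odometer element, by the ergodicity/rigidity argument in the proof of Theorem \ref{factorexists}, which only shifts the set $D$), or work throughout with the specific conjugacy $\Phi$ and only invoke Corollary \ref{rankoneodometerfactor} for the "$\pi$ exists at all" half.
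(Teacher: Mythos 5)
Your plan follows essentially the same route as the paper's proof: for $(1)\Rightarrow(2)$, approximate $\phi(B_l)$ by a union of cosets $\pi_k^{-1}(E)$ and convert the measure estimate to the count in (2a) using that all levels of $\mathcal{T}_m$ have equal measure (plus a coset shift $\g$ aligning the combinatorial indexing with $\phi_k$); for $(2)\Rightarrow(1)$, build the factor map from (2b) via Theorem \ref{factorexists} and use (2a) to show each $B_l$ lies in the pulled-back $\sigma$-algebra, so the factor map is a conjugacy. The one cosmetic difference is that the equal-measure-of-levels observation, not Lemma \ref{folnerlemma}, is what does the counting-to-measure conversion in the paper, but this does not change the argument.
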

\begin{proof} 
We begin with (1) $\so$ (2).  Let $\phi : X \to X_\gothG$ give a conjugacy between the systems.  Since we can also think of $\phi$ as a factor map from $X$ to $X_\gothG$, Corollary
\ref{rankoneodometerfactor} yields (2b).  Thus it suffices to establish statement (2a).  

Because each level in a tower has the same measure, the inequality in (2a) is equivalent to 
\[
\frac{\mu \left(  \left[ \stackrel[\i\in I_{l,m}]{}{\bigcup}\T^{\i}(B_m) \right] \, \triangle  \, \left[   \stackrel[\i\in F_m, \i + G_k \in D]{}{\bigcup}  \T^{\i}(B_m)   \right] \, \right)}{\mu \left( \stackrel[\i\in I_{l,m}]{}{\bigcup}\T^{\i}(B_m)  \right) } < \ep
\]
which we can rewrite as 
\begin{equation}\label{ForThmFive}
\frac{\mu \left( B_l \, \triangle  \, \left[   \stackrel[\i\in F_m, \i + G_k \in D]{}{\bigcup}  \T^{\i}(B_m)   \right] \, \right)}{\mu \left( B_l \right) } < \ep.
\end{equation}
So fix $l \in \N$ and $\ep > 0$  and then set $\delta = \frac 14 \ep \, \mu(B_l)$.  We first find the specific $k$ for statement (2a): letting $\pi_{\tilde{k}}:X_\gothG \rightarrow \Z^d/G_{\tilde{k}}$ and 
recalling that the sets $\{\pi_{\tilde{k}}^{-1}(\g+G_{\tilde{k}}) : \g + G_{\tilde{k}} \in \Z^d/G_{\tilde{k}}, {\tilde{k}} \in \N\}$ generate the $\sigma$-algebra on $X_\gothG$, let $k$ be such that 
$\phi(B_l)$ can be approximated by the $\pi_k$-pullback of some union of cosets from $\Z^d/G_k$.  More specifically, choose $k$ so that there is a union $E$ of cosets from $\Z^d/G_k$ so that
\[  
\mu_{\gothG} \left( \, \phi(B_l) \, \triangle \, \pi_k^{-1}(E) \, \right) < \delta.
\]
Since $\phi$ is a conjugacy, we thus have, for this $k$ and this collection of cosets $E$, 
\begin{equation}\label{ForThmFiveTriangle} 
\mu \left( \, B_l \, \triangle \, \phi_k^{-1}(E) \, \right) < \delta,
\end{equation}
where we set $\phi_k = \pi_k \circ \phi$, so that $\phi_k: X \rightarrow \Z^d/G_k$.

Choose $N_1 \geq l$ so that $\mu \left( \cup_{\i\in F_{N_1}} \T^{\i}(B_{N_1}) \right)> 1-\delta$.  We then find the specific $N$ for statement (2a) by choosing $N\ge N_1$, similar to what we did in part (i) in the proof of Theorem \ref{finitefactorthm}, such that for all $m\ge N$, there exists a $\g  \in \Z^d$ such that
\[ 
\frac{\mu \left( \, B_m \, - \phi_k^{-1}(\g + G_k) \, \right)}{\mu (B_m)} < \delta.
\]
Fix $m\ge N$ and let $\g$ be as described above.  This in turn means, for each $\i\in F_m$,
\begin{equation}
\label{ForThmFiveCover}
\frac{\mu \left( \, \T^{\i}(B_m) \, - \phi_k^{-1}(\g + \i + G_k) \, \right)}{\mu \left(\T^{\i}(B_m)\right)} < \delta.
\end{equation}

Finally, we specify the set $D\subseteq \Z^d/G_k$ referred to in statement (2a) by setting $D = E - (\g + G_k)$.

With $k$ and $N$ thus chosen and our set $D$ determined, we will now turn to verifying inequality (\ref{ForThmFive}).  First, observe
\begin{align}
\label{onetotwosymdif}
& \mu \left( \, B_l \, \triangle  \, \left[   \stackrel[\i \in F_m, \i + G_k \in D]{}{\bigcup}  \T^{\i}(B_m)   \right] \, \right) \leq \\
 & \qquad \quad  \mu \left( \, B_l \, \triangle  \, \phi_k^{-1}(E)\, \right)  +  \mu \left( \,\phi_k^{-1}(E)\, \triangle  \, \left[   \stackrel[\i\in F_m, \i + G_k \in D]{}{\bigcup}  \T^{\i}(B_m)   \right] \, \right). \nonumber
\end{align}
As $\phi_k^{-1}(E)\, \triangle  \, \left[   \stackrel[\i\in F_m, \i + G_k \in D]{}{\bigcup}  \T^{\i}(B_m) \right]$ is the disjoint union of
\[
S_1 =  \phi_k^{-1}(E)\, -  \, \left[   \stackrel[\i\in F_m, \i + G_k \in D]{}{\bigcup}  \T^{\i}(B_m) \right]
\]
and 
\[
S_2 = \left[   \stackrel[\i\in F_m, \i + G_k \in D]{}{\bigcup}  \T^{\i}(B_m) \right] \, - \, \phi_k^{-1}(E),
  \]
we will next bound the measures of $S_1$ and $S_2$ separately.  \\

First, consider points in $S_1 = \phi_k^{-1}(E)\, -  \left[   \stackrel[\i\in F_m, \i + G_k \in D]{}{\bigcup} \, \T^{\i}(B_m) \right]$ and further divide the points in $S_1$ by whether or not they also lie in $B_l$.  For $x \in S_1 \cap B_l$, we see that since the towers stack and $m \geq l$, $x$ lies in the $l^{th}$-tower $\mathcal{T}_l$, so we can find $\hat{\i} \in F_m$ with $x\in \T^{\hat{\i}}(B_m)$.  As $x\in S_1 = \phi_k^{-1}(E) -  \stackrel[\i\in F_m, \i + G_k \in D]{}{\bigcup}  \T^{\i}(B_m)$, it must be that $\hat{\i} + G_k \notin D$ or equivalently, $\hat{\i} + \g + G_k \notin E$.
Despite the fact that (\ref{ForThmFiveCover}) says that most of $\T^{\hat{\i}}(B_m)$ is covered by $\phi_k^{-1}(\hat{\i}+\g+G_k)$, it must be that $x\notin \phi_k^{-1}(\hat{\i}+\g+G_k)$.   For if it were, then $\phi_k(x) = \hat{\i}+\g+G_k$, which we said earlier was not in $E$; however, we also have $x\in \phi_k^{-1}(E)$, a contradiction.  So we have $x\in \T^{\hat{\i}}(B_m)$ and  $x\notin \phi_k^{-1}(\hat{\i}+\g+G_k)$; by (\ref{ForThmFiveCover}) we know the measure of such points is less than $\delta \, \mu(\T^{\hat{\i}}(B_m))$.  Therefore
\begin{equation}
\label{inequality2}
 \mu \left( S_1 \cap B_l \right) < \delta \cdot \mu\left(\stackrel[{\hat{\i}}\in F_m]{}{\bigcup} \T^{\hat{\i}}(B_m) \right) < \delta.
\end{equation}
Turning to points in $S_1 - B_l$, we see from the definition of $S_1$ that such points lie in $\phi_k^{-1}(E)$.  So
\begin{align} \label{inequality1}
\mu \left(S_1 - B_l \right)     & \le      \mu \left(  \phi_k^{-1}(E) - B_l \right) \nonumber \\
&  \le \mu \left(  \phi_k^{-1}(E)\, \triangle \, B_l \right) \nonumber \\
& < \delta 
\end{align}
by (\ref{ForThmFiveTriangle}).  Finally, from (\ref{inequality2}) and (\ref{inequality1}), we have
\begin{equation}
\label{NewBoundOnS1}
\mu(S_1) = \mu(S_1 \cap B_l) + \mu(S_1 - B_l) < \delta + \delta = 2 \delta.
\end{equation}

Second, we consider points in $S_2 = \left[ \stackrel[\i\in F_m, \i + G_k \in D]{}{\bigcup} \, \T^{\i}(B_m) \right] \, - \, \phi_k^{-1}(E)$.  For such a point $x$, we have $\hat{\i} \in F_m$ so that 
$x\in \T^{\hat{\i}}(B_m)$ with $\hat{\i} + G_k\in D$ or equivalently, $\hat{\i} + \g + G_k\in E$.  Note that we must have $\phi_k(x)\neq \hat{\i} + \g + G_k$, because otherwise we would have  
$\phi_k(x)\in E$, contradicting the definition of $S_2$.  So we have $x\in \T^{\hat{\i}}(B_m)$ and  $x\notin \phi_k^{-1}(\hat{\i}+\g+G_k)$; by  (\ref{ForThmFiveCover}) we know the measure of such points is less than $\delta \, \mu(\T^{\hat{\i}}(B_m))$.  All together we then get 
\begin{equation}\label{inequality3}
 \mu(  S_2)< \delta \cdot \mu\left(\stackrel[{\hat{\i}}\in F_m]{}{\bigcup} \T^{\hat{\i}}(B_m) \right) < \delta.
\end{equation}

Combining (\ref{NewBoundOnS1}) and (\ref{inequality3}), we obtain
\[
\mu \left( \phi_k^{-1}(E)\, \triangle  \, \left[   \stackrel[\i\in F_m, \i + G_k \in D]{}{\bigcup} \T^{\i}(B_m)   \right]  \right) = \mu(S_1 \sqcup S_2) < 3\delta,
\]
and returning to  (\ref{onetotwosymdif}), we then have
\[
\mu \left(  B_l \, \triangle  \, \left[  \stackrel[\i\in F_m, \i + G_k \in D]{}{\bigcup}  \T^{\i}(B_m)   \right]  \right) < \mu \left( B_l \, \triangle  \, \phi_k^{-1}(E) \right) + 3\delta
\]
which, using (\ref{ForThmFiveTriangle}), gives us $\mu \left(B_l \, \triangle  \, \left[   \stackrel[\i\in F_m, \i + G_k \in D]{}{\bigcup} \T^{\i}(B_m)   \right] \right) < \delta + 3\delta = 4\delta$. Therefore
\[
\frac{\mu \left( B_l \, \triangle  \, \left[   \stackrel[\i\in F_m, \i + G_k \in D]{}{\bigcup}  \T^{\i}(B_m)   \right] \right)}{\mu \left( B_l \right) } < \frac{4\delta}{\mu \left( B_l \right) } = \ep,
\] 
establishing (\ref{ForThmFive}) as wanted. \\

 We now show (2) $\so$ (1).  From statement (2b) and Theorem \ref{finitefactorthm}, we know for that $G_k\in \mathcal{FF}(\T)$ for every $k$, so there exists a factor map 
 $\pi_k: X\rightarrow \Z^d/G_k$.  It then follows by Theorem \ref{factorexists} that $X$ factors onto $X_{\gothG}$ with the map $\psi(x) = (\psi_1(x), \psi_2(x),...)$ where 
 $\psi_k(x) = \pi_k(x) - \hat{\r}_k(x)$.  As commented on in the proof of that theorem, $(X,\T)$ being rank one, and hence ergodic, means $\hat{\r}_k(x)$ is a constant function and we can write 
  $\psi_k(x) = \pi_k(x) - \hat{\r}_k$.
  
  Our goal is to show that the factor map $\psi$ is in fact a conjugacy.  
 This will be assured if $\psi^{-1}(\mathcal{X}_\gothG) = \mathcal{X}$; since these $\sigma$-algebras are generated respectively by 
 $\{\psi_k^{-1}(\g) : k \in \N, \g \in \Z^d/G_k\}$ and $\{\T^\i(B_l) : l \in \N, \i \in F_l\}$, it suffices to show that for every $l \in \N$ and every $\ep > 0$, 
 there is $k \in \N$ and $E \subseteq \Z^d/G_k$ with $\mu\left(B_l \, \triangle \, \psi_k^{-1}(E)\right) < \ep$.
 
So let $l \in \N$ and $\ep > 0$ be fixed.  By statement (2a) we obtain $k\in \N$ and $N_1\in\N$ (without loss of generality we can assume $N_1>l$) so that whenever $m\ge N_1$, there is $D\subseteq \Z^d/G_k$ so that
 \begin{equation}
 \label{fifthineq}
\frac{\#\left(I_{l,m} \, \triangle \, \{\i \in F_m : \i + G_k \in D\}\right)}{\#(I_{l,m})} < \frac{\ep}4.
\end{equation}
Since every level in the tower $\mathcal{T}_m$ has the same measure, (\ref{fifthineq}) is equivalent to 
 \[
\frac{\mu \left(  \left[ \stackrel[\i\in I_{l,m}]{}{\bigcup}\T^{\i}(B_m) \right] \, \triangle  \, \left[  \stackrel[\i\in F_m, \i + G_k \in D]{}{\bigcup} \, \T^{\i}(B_m)   \right] \right)}{\mu \left( \stackrel[\i\in I_{l,m}]{}{\bigcup}\T^{\i}(B_m)  \right) } < \frac{\ep}4,
\]
and by the definition of $I_{l,m}$ this inequality is the same as
\begin{equation}
\label{symdiffone}
\frac{\mu \left(  B_l \, \triangle  \, \left[   \stackrel[\i\in F_m, \i + G_k \in D]{}{\bigcup} \, \T^{\i}(B_m)   \right]  \right)}{\mu \left( B_l \right) } < \frac{\ep}4.
\end{equation}

Similar to what we did in part (i) of Theorem \ref{finitefactorthm}, we can find $N_2 \geq N_1$ such that for every $m \geq N_2$, there is some $\gamma \in \Z^d/G_k$   such that 
$\mu(B_m - \psi_k^{-1}(\gamma)) < \frac{\ep}4 \mu(B_m)$.  This in turn means 
\begin{equation}\label{difftwo}
\mu(\T^{\i}(B_m) - \psi_k^{-1}(\gamma + \i + G_k)) < \frac{\ep}4\mu(B_m) \hspace{.4 in}   \forall \, \i\in F_m.
\end{equation}

We now find the required set $E \subseteq \Z^d/G_k$ by fixing
 $m\ge N_2$ so large that $\mu(\mathcal{T}_m) > 1 -\frac{\ep}4$ and letting $D$ and $\gamma$ be as specified above.  We then take $E = \gamma + D$; it remains for us to prove that
$\mu\left(B_l \, \triangle \, \psi_k^{-1}(E)\right) < \ep$.   

Observe that

\begin{align}
\label{moresymdiff}
 \mu \left( B_l \, \triangle  \, \psi^{-1}_k(E)  \right) & \leq  \mu \left( B_l \, \triangle  \, \left[   \stackrel[\i\in F_m, \i + G_k \in D]{}{\bigcup}  \T^{\i}(B_m)   \right]  \right)   \nonumber \\
& \hspace{.5 in} +  \mu \left(  \left[   \stackrel[\i\in F_m, \i + G_k \in D]{}{\bigcup} \T^{\i}(B_m)   \right] \, \triangle \, \psi^{-1}_k(E)   \right)  \nonumber \\
& = \mu \left( B_l \, \triangle  \, \left[   \stackrel[\i\in F_m, \i + G_k \in D]{}{\bigcup} \T^{\i}(B_m)   \right]  \right)   \\
& \hspace{.5 in} + \mu \left(   \left[   \stackrel[\i\in F_m, \i + G_k \in D]{}{\bigcup}  \T^{\i}(B_m)   \right]  -  \psi^{-1}_k(E)  \right)  \nonumber \\
 & \hspace{.5 in} + \mu \left(   \left[   \psi^{-1}_k(E)  -  \stackrel[\i\in F_m, \i + G_k \in D]{}{\bigcup}  \T^{\i}(B_m)   \right]  \right) . \nonumber
\end{align}

The first addend in (\ref{moresymdiff}) is, by (\ref{symdiffone}), less than $\frac{\ep}4 \, \mu(B_l) \le \frac{\ep}4$. 

For the second addend of (\ref{moresymdiff}), note that 
\[ 
\mu \left(   \left[   \stackrel[\i\in F_m, \i + G_k \in D]{}{\bigcup} \T^{\i}(B_m)   \right]  -  \psi^{-1}_k(E)  \right) = \sum_{ \i\in F_m,   \i + G_k \in D} \mu \left(  \T^{\i}(B_m) - \psi^{-1}_k(E) \right).
\]
Note that since $\i + G_k \in D$, $\gamma + \i + G_k \in E$.  So by applying (\ref{difftwo}), $\mu(\T^\i(B_m) - \psi_k^{-1}(E)) \leq \mu(\T^\i(B_m) - \psi_k^{-1}(\gamma + \i + G_k)) < \frac{\ep}4\mu(B_m)$.  This makes the second addend of (\ref{moresymdiff}) less than
\[
\sum_{\i\in F_m, \i + G_k \in D}  \frac{\ep}4\mu(B_m)\,  \le \, \frac{\ep}4.
\]

For the third addend of (\ref{moresymdiff}), we divide  $\psi^{-1}_k(E)$ into the part that is not in $\mathcal{T}_m$ and the part that is in $\mathcal{T}_m = \cup_{\i\in F_m}\T^{\i}(B_m)$.  By our choice of $m$ we know $\mu(\psi^{-1}_k(E) - \mathcal{T}_m \, ) < \frac{\ep}4$.  For the rest, note that
\[ 
\mu \left(   \left[ \psi^{-1}_k(E)\cap \mathcal{T}_m \right]  -  \stackrel[\i\in F_m, \i + G_k \in D]{}{\bigcup}  \T^{\i}(B_m) \right) \leq \sum_{\i\in F_m, \i + G_k \notin D}  \mu \left( \,    \psi^{-1}_k(D+\gamma) \cap \T^{\i}(B_m)\, \right).
\]
 Recall from  (\ref{difftwo}) that we picked $\gamma$ such that $\mu(\T^{\i}(B_m) - \psi_k^{-1}(\gamma + \i + G_k)) < \frac{\ep}4\mu(B_m)$.  Here we are only concerned with those $\i$ such that $\i + G_k \notin D$ or equivalently, $\i + \gamma + G_k \notin \gamma + D = E$.  This yields, for each such $\i$, 

\[
\mu \left( \,    \T^{\i}(B_m) \cap \psi^{-1}_k(E)  \, \right) \le \mu \left( \,  \T^{\i}(B_m) - \psi^{-1}(\gamma + \i + G_k) \,  \right) < \frac{\ep}4 \mu (B_m)
\]
and thus $\stackrel[\i\in F_m, \i + G_k \notin D]{}{\sum}  \mu \left(   \psi^{-1}_k(E) \cap \T^{\i}(B_m) \right) < \frac{\ep}4$.  Thus the third addend of (\ref{moresymdiff}) is less than $\frac \ep 2$, so (\ref{moresymdiff}) becomes
\[\mu\left(B_l \, \triangle \, \psi_k^{-1}(E)\right) < \frac \ep 4 + \frac \ep 4 + \frac \ep 2 = \ep\]
as wanted.
\end{proof}

\begin{thm}
\label{conj2someodom}
Let $\T : \Z^d \actson (X,\mu)$ be F{\o}lner rank one.  The following are equivalent:
\begin{enumerate}
\item $(X,\mu,\T)$ is conjugate to an infinite $\Z^d$-odometer.
\item For all $l \in \N$ and all $\ep > 0$, there is a finite index subgroup $G$ of $\Z^d$ such that 
\begin{enumerate}
\item there exists $N_a \in \N$ so that for all $m \geq N_a$, there exists $D \subseteq \Z^d/G$ for which
\[
\frac{\#\left(I_{l,m} \, \triangle \, \{\i \in F_m : \i + G \in D\} \right)}{\#(I_{l,m})} < \ep; {\textrm{ and}}
\]
\item for every $\eta > 0$, there is $N_b \in \N$ so that for all $n > m \geq N_b$, there exists $\g + G \in \Z^d/G$ where
\[
\frac{\#(\{\i \in I_{m,n} : \i \not \equiv \g \mod G\})}{\#(I_{m,n})} < \eta.
\]

\end{enumerate}
\end{enumerate}
\end{thm}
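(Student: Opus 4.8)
The plan is to derive this result from the ``specified odometer'' version of the theorem, Theorem \ref{conj2odom}, together with the structural facts about $\FF(\T)$ developed in Sections 3 and 4. For the forward direction, suppose $(X,\mu,\T)$ is conjugate to the infinite odometer $(X_\gothH, \sigma_\gothH)$ with $\gothH = \{H_1, H_2, H_3, \ldots\}$. Then Theorem \ref{conj2odom} tells us that conditions (2a) and (2b) of \emph{that} theorem hold for the sequence $\gothH$. Fixing $l \in \N$ and $\ep > 0$, condition (2a) of Theorem \ref{conj2odom} produces an index $k$ and an $N$; taking $G = H_k$ and $N_a = N$ gives the first requirement of condition (2) here, while condition (2b) of Theorem \ref{conj2odom} applied with $j = k$ is exactly the second requirement of (2) here (for $G = H_k$; note that the ``$n > m$'' demanded here is weaker than the ``$n \geq m$'' that Theorem \ref{conj2odom} provides). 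This direction is routine.

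For the converse, assume condition (2). For each pair $(l,\ep)$, let $G_{l,\ep}$ denote a finite-index subgroup of $\Z^d$ witnessing (2). The second property of $G_{l,\ep}$ is precisely statement (2) of Theorem \ref{finitefactorthm} for the subgroup $G_{l,\ep}$, so $G_{l,\ep} \in \FF(\T)$. By Lemma \ref{propsofFF}, $\FF(\T)$ is a collection of finite-index subgroups closed under supergroups and intersections, so Lemma \ref{odometergeneration} produces a $\Z^d$-odometer $(X_\gothG, \sigma_\gothG)$ with $\gothG = \{G_1, G_2, G_3, \ldots\}$ for which $\FF(\sigma_\gothG) = \FF(\T)$. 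I would then show that $(X,\mu,\T)$ is conjugate to $(X_\gothG, \sigma_\gothG)$ by verifying the two conditions of Theorem \ref{conj2odom} for $\gothG$. Condition (2b) of Theorem \ref{conj2odom} is immediate: each $G_j \in \FF(\sigma_\gothG) = \FF(\T)$, so $(X,\T)$ factors onto $(\Z^d/G_j, \tau)$, whereupon Theorem \ref{finitefactorthm} supplies the descendant-congruence statement. For condition (2a) of Theorem \ref{conj2odom}, fix $l$ and $\ep$ and set $G = G_{l,\ep}$; since $G \in \FF(\sigma_\gothG)$, Theorem \ref{FFofodoms} yields an index $k$ with $G_k \leq G$. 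The first property of $G_{l,\ep}$ supplies an $N_a$ so that for every $m \geq N_a$ there is $D' \subseteq \Z^d/G$ with
\[
\frac{\#\bigl(I_{l,m} \, \triangle \, \{\i \in F_m : \i + G \in D'\}\bigr)}{\#(I_{l,m})} < \ep;
\]
letting $D \subseteq \Z^d/G_k$ be the preimage of $D'$ under the quotient $\Z^d/G_k \to \Z^d/G$, one has $\{\i \in F_m : \i + G \in D'\} = \{\i \in F_m : \i + G_k \in D\}$, so condition (2a) of Theorem \ref{conj2odom} holds with this $k$, with $N = N_a$, and with $D$. Theorem \ref{conj2odom} then gives that $(X,\T)$ is conjugate to $(X_\gothG, \sigma_\gothG)$; and since $(X,\mu)$ is a nonatomic Lebesgue probability space while conjugacy preserves the measure space, $X_\gothG$ is nonatomic, hence infinite. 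Thus $(X,\T)$ is conjugate to an infinite $\Z^d$-odometer, completing the converse.

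Most of the substance here has already been carried out in Theorems \ref{finitefactorthm} and \ref{conj2odom}, in Lemmas \ref{propsofFF} and \ref{odometergeneration}, and (for the converse) in Theorem \ref{FFofodoms}; the present argument is essentially bookkeeping that matches the ``abstract'' subgroups $G_{l,\ep}$ provided by condition (2) to the specific defining sequence $\{G_k\}$ of the odometer generated by $\FF(\T)$. The one spot calling for care — and what I would flag as the main (if modest) obstacle — is the verification of condition (2a) of Theorem \ref{conj2odom}: one must pass from an approximation of $I_{l,m}$ by cosets of the possibly-coarser subgroup $G_{l,\ep}$ to an approximation by cosets of the finer subgroup $G_k$ from the odometer's sequence. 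This is exactly why Theorem \ref{FFofodoms} (every member of $\FF(\sigma_\gothG)$ lies above some $G_k$) is needed, and why the containment $G_k \leq G_{l,\ep}$, rather than the reverse, is the useful one.
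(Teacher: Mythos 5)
Your proof is correct, and the forward direction matches the paper's. For the converse, however, you take a genuinely different and leaner route than the paper does. The paper does not work with $\FF(\T)$ directly: it forms the closure $\F$ of the witnessing subgroups $\bigcup_{l,\ep}\mathcal{G}(l,\ep)$ under supergroups and intersections, proves by hand (via Lemma \ref{folnerlemma} and hypothesis (2a) with $\ep<1$) that $\bigcup_{l,\ep}\mathcal{G}(l,\ep)$ is infinite so that the generated odometer is infinite, and then performs surgery on the defining sequence --- replacing $G_k'$ by $G_k = G_k' \cap \bigl(\bigcap_{i=1}^k\bigcap_{j=1}^k G_{i,1/j}\bigr)$ and checking via Corollary \ref{FFdistinguishesodoms} that the new odometer is conjugate to the old --- precisely so that the containment $G_k \leq G_{l,1/j}$ needed to transfer condition (2a) of Theorem \ref{conj2odom} holds by construction. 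You obtain that same containment for free from Theorem \ref{FFofodoms} (every element of $\FF(\sigma_\gothG)$ contains some $G_k$), which eliminates both the sequence modification and the auxiliary closure $\F$; and your observation that $\{\i \in F_m : \i + G \in D'\} = \{\i \in F_m : \i + G_k \in D\}$ when $D$ is the preimage of $D'$ under the quotient $\Z^d/G_k \to \Z^d/G$ is exactly the transfer step the paper also performs. You also replace the paper's quantitative infiniteness argument by the soft observation that a nonatomic Lebesgue space admits no almost-surely one-to-one factor map onto a finite odometer, which is legitimate under the paper's conventions (Lebesgue probability spaces are declared nonatomic) and involves no circularity, since the proof of Theorem \ref{conj2odom} in the direction you invoke does not presuppose the odometer is infinite. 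What the paper's longer route buys is an intrinsic demonstration that hypothesis (2) by itself forces infinitely many distinct witnessing subgroups, independent of the nonatomicity of $X$; what yours buys is brevity and a cleaner dependence structure, with everything reduced to Theorems \ref{finitefactorthm}, \ref{FFofodoms} and \ref{conj2odom} together with Lemmas \ref{propsofFF} and \ref{odometergeneration}.
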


\begin{proof} 
We begin with (1) $\so$ (2):  Suppose $(X,\T)$ is conjugate to an infinite $\Z^d$-odometer ($X_\gothG, \sigma_\gothG)$ given by $\gothG = \{G_1, G_2, G_3, ...\}$.  Since the odometer is infinite, there must be infinitely many distinct $G_j$, all of which are elements in $\FF(\sigma_\gothG)$.  Thus $\FF(\sigma_\gothG)$ is an infinite set.
Fix $l \in \N$ and $\ep > 0$.  By condition (2a) of Theorem \ref{conj2odom}, we obtain $k_{l,\ep}$ and $N_{l,\ep} \in \N$ so that whenever $m \geq N_{l,\ep}$, there is $D \subseteq \Z^d/G_{k_{l,\ep}}$ so that
\[
\frac{\#\left(I_{l,m} \, \triangle \, \{\i \in F_m : \i + G_{k_{l,\ep}} \in D\}\right)}{\#(I_{l,m})} < \ep.
\]
This gives condition (2a) above with $N_a = N_{l,\ep}$ and $G=G_{k_{l,\ep}}$.
Now fix $\eta>0$ and use $j=k_{l,\ep}$ and $\ep = \eta$ in condition (2b) of Theorem \ref{conj2odom} to find $N_{b} \in \N$ so that for all $n\ge m \geq N_{b}$, there is $\g  \in \Z^d$ so that
\[
\frac{\#\left(\{\i \in I_{m,n} : \i \not \equiv \g \mod G_{k_{l,\ep}}\}\right)}{\#(I_{m,n})} < \eta,
\]
giving condition (2b) above. Thus we obtain (2). \\

We now turn to proving (2) $\so$ (1).  This is done in three steps: we first use the finite-index subgroups given to us by (2) 
to obtain an odometer $(X_{\gothG'}, \sigma_{\gothG'})$.  We then modify $(X_{\gothG'}, \sigma_{\gothG'})$ to obtain a 
conjugate odometer $(X_{\gothG}, \sigma_{\gothG})$ whose generating groups are more controlled.  Finally, we show this new odometer is conjugate to the original F{\o}lner rank one system $(X,\mu,\T)$ .

For the first step, for each $l \in \N$ and each $\ep > 0$, let $\mathcal{G}(l, \ep)$ be the set of all finite-index subgroups of $\Z^d$ which satisfy conditions (2a) and (2b) of this theorem.  By assumption, this is nonempty.  Then define $\F$ to be the smallest set of finite-index subgroups of $\Z^d$ that is closed under supergroups, closed under intersections, and contains every member of every $\mathcal{G}(l,\ep)$.   Note that this is equivalent to saying
\begin{equation} \label{equivFdef}
\F = \left\{G \leq \Z^d : \begin{array}{l}
\textrm{there exists }\ep_1,\dots, \ep_t \in (0,1) \textrm{ and }l_1,\dots, l_t \in \N, \\
\textrm{and for each }j \in \{1, \dots, t\} \textrm{ there exists }G_j \in \mathcal{G}(l_j, \ep_j) \\
\textrm{ so that }G \geq \bigcap_{j=1}^t G_j.
\end{array}
\right\}.
\end{equation}

By Lemma \ref{odometergeneration}, there is an odometer $(X_{\gothG'}, \sigma_{\gothG'})$  such that $\FF(\sigma_{\gothG'}) = \F$.
We will next show that $\F$ is an infinite set; Lemma \ref{odometergeneration} will then also tell us that 
$(X_{\gothG'}, \sigma_{\gothG'})$ is an infinite odometer.

We will show $\F$ is an infinite set by in fact showing that $\cup_{l,\ep}\mathcal{G}(l, \ep)$ is infinite, which we do by contradiction.  
So assume that $\cup_{l,\ep}\mathcal{G}(l, \ep)$ is finite.  Then there is $t \in \N$ so that $[\Z^d:G] \leq t$ for all $G \in \cup_{l, \ep} \mathcal{G}(l, \ep)$; without loss of generality we choose such a $t$ so that $t \geq 8$.  Now, choose $l$ so that $\mu(B_l) < \frac 1{t^2}$ and let $\ep < 1$. 
The inequality in (2a) provides us with $G$, $N_a$ and $D$ so that for all $m \geq N_a$,
\begin{align}
\frac{\#\left(I_{l,m} \, \triangle \, \{\i \in F_m : \i + G \in D\} \right)}{\#(I_{l,m})} & =  \nonumber\\
\frac{\mu\left(B_l \, \triangle \, \stackrel[\i \in F_m, \i + G \in D]{}{\bigcup} \T^\i(B_m) \right)}{\mu(B_l)} & < \, \ep.  \label{bound0}
\end{align}
Note that the collection of cosets $D$ above must be nonempty, because otherwise (\ref{bound0}) would become $1 < \ep$, which is impossible.  So $D$ must contain at least one coset in $\Z^d/G$, say $\w + G$.

Applying Lemma \ref{folnerlemma}, there exists $\tilde{N}$ such that for all $n\ge\tilde{N}$ and  every $\v \in \Z^d$,
\begin{equation}
\label{bound1}
 \frac{\#(F_{n} \cap (\v + G))}{\#(F_{n})} \geq \frac {1/2} {[\Z^d:G]} \geq \frac {1}{2t}.
\end{equation}

Choose $n\ge\textrm{ max}\{\tilde{N}, N_a\}$ so that the measure of the error set of the tower $\mathcal{T}_{n}$ is less than $ \frac 12$. 
We thus have $\mu(\mathcal{T}_{n})>\frac{1}{2}$ which means $\mu(B_{n}) > \frac{1}{2} \frac{1}{  \#(F_{n}) }$.
This gives us
\begin{align*}
\mu\left( \stackrel[\i \in F_{n}, \i + G \in D]{}{\bigcup} \T^i(B_{n})\right)
& = \#(\i \in F_{n}, \i + G \in D) \cdot \mu(B_{n}) & \phantom{abc} \\
& > \#(\i \in F_{n}, \i + G \in D) \cdot \frac{1}{2}\,  \frac{1}{ \#(F_{n_k})} \\
& \geq \#(\i \in F_{n}, \i \in \w + G) \cdot \frac{1}{2}\,  \frac{1}{ \#(F_{n_k})} & \textrm{(since $\w + G \in D$)} \\
& \geq \left[\frac 1{2t} \, \#(F_{n})\right]\cdot \frac{1}{2}\,  \frac{1}{ \#(F_{n_k})} & \textrm{(from (\ref{bound1}))} \\
& = \frac {1}{4t}.
\end{align*}
Thus when $m=n$, the set in the numerator of (\ref{bound0}) is the symmetric difference of a set of measure less than $\frac 1{t^2}$ and a set of measure at least $\frac 1{4t}$.  The smallest possible measure of the symmetric difference of these two sets is when the small set is a subset of the larger one, meaning
\[
\mu\left(B_l \, \triangle \, \stackrel[\i \in F_{m}, \i + G \in D]{}{\bigcup} \T^\i(B_{m}) \right) > \frac {1}{4t} - \frac 1{t^2}.
\]
Therefore
\[
\frac{\mu\left(B_l \, \triangle \, \stackrel[\i \in F_{m}, \i + G \in D]{}{\bigcup} \T^\i(B_{m}) \right)}
{\mu(B_l)} > \frac{\frac {1}{4t} - \frac 1{t^2}}{\frac 1{t^2}} = \frac t4-1 \geq \frac 84 - 1 = 1.
\]
and this cannot be less than $\ep$ when $\ep < 1$, violating (2a).  Thus $\cup_{l, \ep} \mathcal{G}(l, \ep)$, and the set $\F$, is infinite as wanted, and we have obtained an infinite odometer $(X_{\gothG'}, \sigma_{\gothG'})$. Write $\gothG' = \{G_1', G_2', G_3', ...\}$.

\vspace{.1in}

In the final two steps of the proof, we show  $(X,\T)$ is conjugate to $(X_{\gothG'}, \sigma_{\gothG'})$ by checking that (2a) and (2b) of Theorem \ref{conj2odom} hold, but not for the sequence $\gothG'$; rather, we will check these properties for a conjugate version of $(X_{\gothG'}, \sigma_{\gothG'})$ coming from a different sequence of subgroups we now describe.

For each $l$ and $j \in \N$, choose a particular $G_{l,\frac{1}{j}} \in \mathcal{G}(l, \frac{1}{j})$ and then set, for each $k \in \N$,
\[
G_k = G_k' \cap \left(\bigcap_{i=1}^k \bigcap_{j=1}^k G_{i, \frac 1j}\right).
\]
Consider the odometer $(X_{\gothG}, \sigma_{\gothG})$ determined by the sequence $\gothG = \{G_1, G_2, G_3, ...\}$.  We claim that $(X_{\gothG}, \sigma_{\gothG})$ is conjugate to $(X_{\gothG'}, \sigma_{\gothG'})$, and to verify this we will show $\F = \FF(\sigma_{\gothG'})$ coincides with $\FF(\sigma_{\gothG})$: the claim then follows from Corollary \ref{FFdistinguishesodoms}.  

Toward that end, let $\tilde{G}\in \FF(\sigma_{\gothG'})$.  By Theorem \ref{FFofodoms} this means that $[\Z^d : \tilde{G}] < \infty$ 
and $\tilde{G} \geq G_N'$ for some $N$.  But $G_N' \geq G_N$ and thus $\tilde{G} \geq G_N$ and, again by Theorem \ref{FFofodoms}, this means $\tilde{G}\in \FF(\sigma_{\gothG})$ and so $\FF(\sigma_{\gothG'})\subseteq  \FF(\sigma_{\gothG}) $.  Now let $G\in \FF(\sigma_{\gothG})$.  Again, this means 
$[\Z^d : {G}] < \infty$ and ${G} \geq G_N$ for some $N$.    We will show $G\in \F=\FF(\sigma_{\gothG'})$, completing this step in the proof.  
Note that every $G_{l,\ep}\in  \mathcal{G}(l, \ep)  $ lies in $\F$ by definition.  Since every $G_N'$ lies in $ \FF(\sigma_{\gothG'})$ and $ \FF(\sigma_{\gothG'})=\F$, we have every $G_N' \in \F$, too.  Together these tells us that, since $\F$ is closed under intersections, $G_N\in\F$.  But $\F$ is also closed under supersets, so we have $G\in\F$, as needed.

\vspace{.1in}

For the final step in the proof, we will show that  $(X,\T)$ is conjugate to $(X_{\gothG}, \sigma_{\gothG})$ by checking that 
(2a) and (2b) of Theorem \ref{conj2odom} hold.  

Fix $l\in\mathbb{N}$ and $\ep>0$.  Recall that we choose $G_{l,\frac{1}{k}}\in \mathcal{G}(l, \frac 1k)$ to satisfy (2a) and (2b) of this theorem.  Thus 
 there exists $N_a$ (which we can assume is $>l$) where for all $m\ge N_a$, there exists 
$\widetilde{D} \subseteq \Z^d/G_{l,\frac 1k}$ so that
\begin{equation}
\label{getting2a}
 \frac{\#\left( I_{l,m} \, \triangle \, \{\i \in F_m : \i + G_{l, \frac 1k} \in \widetilde{D}\} \right)}{\#(I_{l,m})} < \frac 1k.
\end{equation}

Fix $m\ge N_a$.  As $G_k \leq G_{l, \frac 1k}$, there is a natural quotient map $q : \Z^d/G_k \to \Z^d/G_{l, \frac 1k}$. Set $D = q^{-1}(\widetilde{D})$.  Then $D\subseteq \Z^d/G_k$ and if $\i \in F_m$ and $\i + G_k \in D$, then $\i + G_{l, \frac 1k} \in \widetilde{D}$. 
Thus 
\[
\frac{\#\left(I_{l,m} \, \triangle \, \{\i \in F_m : \i + G_k \in D\} \right)}{\#(I_{l,m})} < \frac 1k < \ep,
\]
giving us (2a) of Theorem \ref{conj2odom}.

To get (2b) of Theorem \ref{conj2odom}, recall that $\F = \FF(\sigma_{\mathcal{G}'})$ where $\mathcal{G}' = \{G_1', G_2',...\}$.  
Thus for each $j$, $G_j'\in\F$.  By the description of $\F$ given in (\ref{equivFdef}), this means 
$G_j' \geq \stackrel[i=1]{t}{\cap} G_{l_i,\ep_i}$, where $G_{l_i,\ep_i} \in \mathcal{G}(l_i, \ep_i)$ for some $l_1, ...,l_t$ and $\ep_1,...,\ep_t$.  By definition of $\mathcal{G}(l_i, \ep_i)$, we know $G_{l_i,\ep_i}$ satisfies (2b) of this theorem for $l_i$ and $\ep_i$.
By Theorem \ref{finitefactorthm}, this says $G_{l_i,\ep_i}\in \FF(T)$.
Since $\FF(T)$ is closed under intersections, we have $ \stackrel[i=1]{t}{\cap} G_{l_i,\ep_i} \in \FF(T)$, 
and since it is also closed under supergroups, we get $G_j'\in \FF(T)$.  
By similar reasoning and its definition, we then get $G_j\in\FF(T)$ for each $j$.  
By the definition of $\FF(T)$, this says $(X,\mu, T)$ factors onto $(\Z^d/G_j,\delta , \tau)$ for each $j$
 and thus by Theorem \ref{factorexists}, 
$(X, T)$ factors onto $(X_{\gothG}, \sigma_{\gothG})$.  
Corollary \ref{rankoneodometerfactor} then yields (2b) of 
Theorem \ref{conj2odom} as wanted.

\end{proof}

%
%

\section{Example}

In the example of Theorem \ref{examplenottotallyergodic} we indicated how the two-dimensional case is different and more complicated than the one-dimensional case.  We continue with this type of investigation here and  show that a $\Z^2$-action does not automatically inherit the factoring properties of its subactions: 

\begin{thm} \label{examplesubactions} There is a F\/{o}lner rank one action $\T : \Z^2 \actson [0,1]$ so that $\T^{\e_1}$ and 
$\T^{\e_2}$ each factor onto the dyadic $\Z$-odometer, but $\T$ does not factor onto any free $\Z^2$-odometer. 
\end{thm}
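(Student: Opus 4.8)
The plan is to build $\T$ by cutting and stacking with square tower shapes $F_m=\{0,1,\dots,h_m-1\}^2$, choosing the heights $h_m$ and the placement of $\mathcal{T}_m$ inside $\mathcal{T}_{m+1}$ so as to force two features of the descendant sets. First, eventually every descendant has coordinate sum divisible by an arbitrarily large power of $2$; by Corollary~\ref{rankoneodometerfactor} this makes $\T$ factor onto the $\Z^2$-odometer $X_{\gothG}$ with $\gothG=\{G_k\}$, $G_k=\{(a,b)\in\Z^2: a+b\equiv 0\bmod 2^k\}$, which is the group of $2$-adic integers with both $\sigma_{\gothG}^{\e_1}$ and $\sigma_{\gothG}^{\e_2}$ acting as translation by $1$; since $(X_{\gothG},\sigma_{\gothG}^{\e_i})$ is then the dyadic $\Z$-odometer, $\T^{\e_1}$ and $\T^{\e_2}$ each factor onto it. Second, a proportion bounded below of the descendants occur in pairs differing by $(h_m,-h_m)$; taking the $h_m$ to be distinct primes, this will show via Theorem~\ref{finitefactorthm} that $(h_m,-h_m)\in G$ for every $G\in\FF(\T)$ and every large $m$, hence $(1,-1)\in G$ (the $h_m$ are pairwise coprime), so $\bigcap_{G\in\FF(\T)}G\supseteq(1,-1)\Z\neq\{\0\}$ and Lemma~\ref{infodomfactorexists}(2) denies $\T$ any free $\Z^2$-odometer factor.

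\textbf{The construction.} Fix any $j(m)\to\infty$ and inductively choose $h_m$ to be a prime with $h_m\equiv-1\bmod 2^{j(m)}$ (possible by Dirichlet's theorem) so large that $2^{j(m)}/h_m$ and $h_m/h_{m+1}$ are summable in $m$. Let $h_m'=2^{j(m)}\lceil h_m/2^{j(m)}\rceil$, which equals $h_m+1$ since $h_m\equiv-1\bmod 2^{j(m)}$. Inside $F_{m+1}$ lay disjoint translates of $F_m$ in horizontal rows of height $h_m$: for $0\le k<\lfloor h_{m+1}/h_m\rfloor$, put copies of $F_m$ with lower-left corners $(x_{k,0}+jh_m',\,kh_m)$, $j=0,1,2,\dots$ as far as they fit, where $x_{k,0}\in\{0,\dots,2^{j(m)}-1\}$ satisfies $x_{k,0}+kh_m\equiv0\bmod 2^{j(m)}$; let $I_{m,m+1}$ be the set of these corners. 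Then: the translates are pairwise disjoint and cover a proportion $\to1$ of $F_{m+1}$ (because $h_m'=h_m+1$ and $h_m/h_{m+1}\to0$), so the base measures can be chosen to make the error sets decrease to $0$; each corner has coordinate sum $\equiv0\bmod 2^{j(m)}$ (as $2^{j(m)}\mid h_m'$); and $h_m\equiv-1\bmod 2^{j(m)}$ forces $x_{k,0}=k\bmod 2^{j(m)}$, whence for every odd $k$ and almost every corner of row $k$ the point $(x_{k,j}+h_m,(k-1)h_m)$ is exactly the corner $x_{k-1,j+1}$ of row $k-1$; pairing rows $(0,1),(2,3),\dots$ then exhibits a proportion $\to1$ of $I_{m,m+1}$ (hence $\ge\tfrac34$ for large $m$) inside $(h_m,-h_m)$-pairs. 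Since $h_m\to\infty$, $\{F_m\}$ is a F{\o}lner sequence, so this cutting and stacking defines a F{\o}lner rank one $\Z^2$-action, realized on $[0,1]$.

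\textbf{The two verifications.} Use $I_{m,n}=I_{m,m+1}+I_{m+1,m+2}+\cdots+I_{n-1,n}$. Together with the construction this gives that every element of $I_{m,n}$ has coordinate sum $\equiv0\bmod 2^{j(m)}$; so for each $k$ and each $\ep>0$, choosing $N$ with $j(N)\ge k$ and $\g=\0$ makes the proportion of $\i\in I_{m,n}$ with $\i\not\equiv\g\bmod G_k$ equal to $0$ for all $n\ge m\ge N$, which is condition (2) of Corollary~\ref{rankoneodometerfactor} for $\gothG$; hence $\T$ factors onto $X_{\gothG}$, and so $\T^{\e_1},\T^{\e_2}$ factor onto the dyadic $\Z$-odometer. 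Next let $G\in\FF(\T)$; Theorem~\ref{finitefactorthm} with $\ep=\tfrac18$ yields $N$ so that for $n\ge m\ge N$ a single coset of $G$ contains a proportion $\ge\tfrac78$ of $I_{m,n}$. The $(h_m,-h_m)$-pair property passes to the sumset (if $\{\i,\i+(h_m,-h_m)\}\subseteq I_{m,m+1}$ and $\k\in I_{m+1,n}$ then $\{\i+\k,\i+\k+(h_m,-h_m)\}\subseteq I_{m,n}$, and these pairs are disjoint), so for large $m$ at least $\tfrac34\#(I_{m,m+1})$ of $I_{m,m+1}$ lies in such pairs; thus there are at least $\tfrac38\#(I_{m,m+1})$ disjoint pairs, of which at most $\tfrac18\#(I_{m,m+1})$ meet the complement of the distinguished coset, leaving a pair with both endpoints in that coset and hence $(h_m,-h_m)\in G$. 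This holds for all large $m$, and distinct primes are pairwise coprime, so $(1,-1)\in\langle(h_m,-h_m):m\ge M\rangle\subseteq G$; therefore $\bigcap_{G\in\FF(\T)}G\supseteq(1,-1)\Z$ and Lemma~\ref{infodomfactorexists}(2) finishes the proof.

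\textbf{Main obstacle.} The delicate point is the construction above. A cutting and stacking with fixed square shapes is essentially forced to place its sub-towers on the grid $h_m\Z^2$, which would trap every descendant difference in $h_m\Z^2$ and simultaneously destroy the mod-$2^k$ control and the generation of a fixed nonzero subgroup. The remedy is to slice the rows along the sublattice of index $2^{j(m)}$ and to take $h_m$ prime, hence coprime to $2^{j(m)}$: this lets the offsets $x_{k,0}$ run through all residues mod $2^{j(m)}$, keeps the packing lossless to leading order, and (because then $h_m'=h_m+1$) makes the downward pairing succeed in every odd row. Checking that the resulting data genuinely define a $\Z^2$-action whose tower levels generate the $\sigma$-algebra and whose error sets tend to $0$ is routine, but it is where the bookkeeping lies.
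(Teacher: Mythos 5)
Your proposal is correct, but it is a genuinely different construction and a partly different logical route from the paper's. The paper builds $\T$ from rectangular towers of shape $2^{4^{n-1}}\times 2^n$ arranged in two staggered rows, so that the descendant sets force $(1,2^m)$ and $(0,2^m)$, hence $(1,0)$, into every group $G_j$ defining an odometer factor; it then establishes the dyadic factors of $\T^{\e_1}$ and $\T^{\e_2}$ by constructing the factor maps explicitly by hand (unavoidable there, since the subactions are not rank one and so Corollary \ref{rankoneodometerfactor} does not apply to them). You instead use square towers with a diagonal stagger of $+1$ per row and prime side lengths $h_m$, so that the forced vector is $(h_m,-h_m)$ for infinitely many pairwise coprime $h_m$, hence $(1,-1)\in\bigcap_{G\in\FF(\T)}G$; and you obtain the dyadic factors of both subactions in one stroke by factoring the full $\Z^2$-action onto the non-free $\Z^2$-odometer with $G_k=\{(a,b):a+b\equiv 0\bmod 2^k\}$, whose restriction to either generator is the dyadic $\Z$-odometer. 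The latter step is a nice simplification: it routes everything through Theorem \ref{finitefactorthm} and Corollary \ref{rankoneodometerfactor} and sidesteps the hand-built factor maps entirely, at the cost of a slightly more delicate placement scheme (the offsets $x_{k,0}$ running through residues mod $2^{j(m)}$, with $h_m\equiv -1\bmod 2^{j(m)}$ making the horizontal period $h_m+1$). I checked the two key computations — that every corner has coordinate sum $\equiv 0\bmod 2^{j(m)}$, and that $x_{k,j}+h_m=x_{k-1,j+1}$ whenever $k\not\equiv 0\bmod 2^{j(m)}$ — and both are right; the counting argument that extracts a monochromatic $(h_m,-h_m)$-pair from the $\tfrac78$-coset of Theorem \ref{finitefactorthm} is also sound. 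Two small points to tidy: you should say explicitly that $j(m)$ is nondecreasing (you need $2^{j(m)}\mid 2^{j(l)}$ for $l\ge m$ when passing to the sumset $I_{m,n}$), and in the final count the quantity $\#(I_{m,m+1})$ should read $\#(I_{m,n})$; neither affects the argument. The deferred bookkeeping (realizing the construction on $[0,1]$ with error sets tending to $0$) is of exactly the kind the paper itself carries out, so deferring it is reasonable.
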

\begin{proof} 

This action $\T$ will be constructed inductively by cutting and stacking, as described in \cite{PR} and \cite{JS}.  We thus will define a sequence of towers $\mathcal{T}_n$ and an action $\T_n$ on each tower such that in the limit we will have a $\Z^2$-action $\T$ on a space of full measure.

Begin by cutting four disjoint subintervals from $[0,1]$ of equal length $\delta_1< \frac{1}{4}$, a number that will be determined later. Associate these subintervals to a $2\times 2$ grid and label the subintervals accordingly by $\{L^1_\v : \v \in \Z^2 \cap \left([0,2) \times [0,2)\right)\}$; we think of $\v$ as recording the position of the subinterval in a tower $\mathcal{T}_1$.  Let $E_1$ be the remaining portion of $[0,1]$: we consider this the spacer portion for this step of the construction.

We then define, for $j \in \{1,2\}$, an action $\T^{\e_j}_1 : L^1_\v \to L^1_{\v + \e_j}$ to be a linear map for any $\v$ where both $\v$ and $\v + \e_j$ lie in $\Z^2 \cap \left([0,2) \times [0,2)\right)$.  In other words, the functions $\T^{\e_j}_1$ map  the interval at position $\v$ to the interval at position $\v + \e_j$ by simple translation.   Observe that $\T^{\e_1} \circ \T^{\e_2} = \T^{\e_2} \circ \T^{\e_1}$ where these maps are both defined, so we obtain a tower $\mathcal{T}_1$ of shape $\Z^2 \cap \left([0,2) \times [0,2)\right)$ and a $\Z^2$-action $\T_1$ defined on part of $\mathcal{T}_1$.  Our final $\Z^2$-action $\T$ will coincide with $\T_1$ on that portion of $[0,1]$ where $\T_1$ is defined.

For the inductive step, suppose we have a tower $\mathcal{T}_n$, which consists of $2^{4^{n-1}}\times 2^n$ subintervals of equal length 
$\delta_n$, and labeled by $\{L^n_\v : \v \in \Z^2 \cap \left([0,2^{4^{n-1}}) \times [0,2^n)\right)\}$.  Also suppose that we have maps
$\T^{\e_j}_n : L^n_\v \to L^n_{\v + \e_j}$ that translate subinterval $L^n_\v$ to subinterval $L^n_{\v + \e_j}$ for any $\v$ where both $\v$ and $\v + \e_j$ lie in $\Z^2 \cap \left([0,2^{4^{n-1}}) \times [0,2^n)\right)$.  Let $E_n$ be the portion of $[0,1]$ not in $\mathcal{T}_n$.

To construct the tower $\mathcal{T}_{n+1}$ from $\mathcal{T}_n$, we first cut each interval $L^n_\v$ into $ \left( 2^{4^{n} - 4^{n-1}} - 1 \right) \times 2$  equal size subintervals so that we create $ \left( 2^{4^{n} - 4^{n-1}} - 1 \right) \times 2$ copies of $\mathcal{T}_n$ which we then place into two slightly staggered rows.  This is shown in Figure \ref{fig:figure1}, where $t_n =  \left( 2^{4^{n} - 4^{n-1}} - 1\right)$ and $\mathcal{T}_n^i$ indicates the various copies of $\mathcal{T}_n$.  The length of each subinterval is thus 
$\delta_{n+1} = \frac{\delta_n}{\left( 2^{4^{n} - 4^{n-1}} - 1 \right) \times 2}$: we cut from $E_n$ enough subintervals of this same length to result in a tower $\mathcal{T}_{n+1}$ of size $2^{4^n}\times 2^{n+1}$, filling these new subintervals into the locations indicated by the starred areas in Figure \ref{fig:figure1}.  Each of these subintervals is then labeled by $\{L^{n+1}_\v : \v \in \Z^2 \cap \left([0,2^{4^n}) \times [0,2^{n+1})\right)\}$ to indicate its location in $\mathcal{T}_{n+1}$ and we let $E_{n+1}\subset E_n$ be the remaining portion of $[0,1]$.

 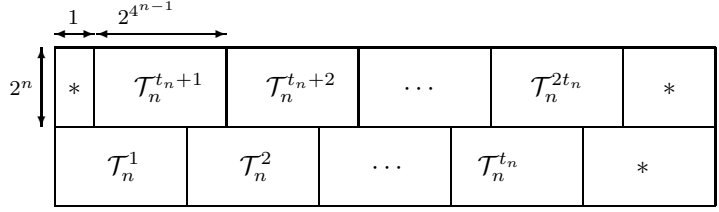
\begin{figure}
 \begin{center}
 \begin{picture}(250,80)
 \multiput(0,0)(0,30){3}{\line(1,0){250}}
 \multiput(0,0)(50,0){6}{\line(0,1){30}}
 \multiput(15,30)(50,0){5}{\line(0,1){30}}
 \multiput(0,30)(250,0){2}{\line(0,1){30}}
 \put(20,12){$\mathcal{T}_n^1$}
  \put(70,12){$\mathcal{T}_n^2$}
  \put(120,12){$\cdots$}
    \put(160,12){$\mathcal{T}_n^{t_n}$}
     \put(30,42){$\mathcal{T}_n^{t_n+1}$}
  \put(80,42){$\mathcal{T}_n^{t_n+2}$}
    \put(132,42){$\cdots$}
    \put(180,42){$\mathcal{T}_n^{2t_n}$}
    \put(10,65){\vector(-1,0){10}}
        \put(10,65){\vector(1,0){5}}
        \put(5,69){\footnotesize{$1$}}
        \put(5,42){$*$}
                \put(230,42){$*$}
                        \put(220,12){$*$}
                        \put(40,65){\vector(-1,0){24}}
                        \put(40,65){\vector(1,0){25}}
                        \put(24,69){\footnotesize{$2^{4^{n-1}}$}}
                        \put(-5,40){\vector(0,-1){10}}
                        \put(-5,40){\vector(0,1){20}}
                        \put(-17,42){\footnotesize{$ 2^n$}}
 \end{picture}
 \end{center}
 \caption{Construction of tower $\mathcal{T}_{n+1}$ from tower $\mathcal{T}_n$.}
   \label{fig:figure1}
 \end{figure}
 
We then define, for $j \in \{1,2\}$, a map $\T^{\e_j}_{n+1} : L^{n+1}_\v \to L^{n+1}_{\v + \e_j}$ by simple translation, for any $\v$ where both $\v$ and $\v + \e_j$ lie in $\Z^2 \cap \left( \,  [0,2^{4^n})    \times   [0,2^{n+1}) \,   \right)$.   Observe that $\T^{\e_j}_{n+1}$ coincides with $\T^{\e_j}_{n}$ where these both are defined, and that 
  $\T^{\e_1} \circ \T^{\e_2} = \T^{\e_2} \circ \T^{\e_1}$ where both are defined. We thus obtain a tower $\mathcal{T}_{n+1}$ of shape $\Z^2 \cap \left([0,2^{4^n}) \times [0,2^{n+1})\right)$ and a $\Z^2$-action $\T_{n+1}$ defined on an increasingly larger portion of $[0,1]$.

The total of the lengths of all the new subintervals introduced into $\mathcal{T}_{n+1}$ is bounded above by $\frac{1}{2^n}\delta_1$, and thus the sum of the lengths of all the intervals used in this construction is bounded by $4\delta_1 + \sum \frac{1}{2^n}\delta_1$: we can thus choose $\delta_1$ so that the sum of the lengths of all the intervals used in the construction is exactly 1.  We then define 
$\T : \Z^2 \actson [0,1]$ by setting, for a.e. $x\in [0,1]$, $\T^{\e_j}(x) = \T^{\e_j}_{n}(x)$ where $n$ is large enough so that $x\in L^{n}_\v$ and both ${\v}$ and ${\v + \e_j}$ lie in $\Z^2 \cap \left([0,2^{4^{n-1}}) \times [0,2^n)\right)$.  We have thus defined a F{\o}lner rank one action $\T: \Z^2 \actson [0,1]$.

\vspace{.1in}


We now want to show that this $\Z^2$-action has the desired properties, and we will begin by showing it
does {\it{not}} factor onto any free odometer.  Suppose that $(X,\T)$ factors onto some $\Z^d$-odometer $(X_\gothG,\sigma_\gothG)$ given by the sequence of subgroups $\gothG = \{G_j\}$.   We will apply Corollary \ref{rankoneodometerfactor} to show that eventually, most descendants \emph{cannot} be congruent mod $G_j$ unless $(1,0) \in G_j$.  To do this, first fix $j$ and $\epsilon < \frac{1}{4}$.  Next, for any $m\in\N,$ look at $I_{m,m+1}$ and note that, by the construction (as visualized by Figure \ref{fig:figure1}),
\begin{align*}
I_{m,m+1} & =  \left\{(k \cdot 2^{4^{m-1}}, 0) : k \in \{0, ..., t_m-1\}\right\}\\
& \qquad \bigsqcup \left\{(1 + k \cdot 2^{4^{m-1}}, 2^m) : k \in \{0, ..., t_m-1\}\right\}.
\end{align*}
This means we can organize the elements in $I_{m,m+1}$ into pairs as each $\w$  in the bottom row can be paired with $\w + (1, 2^m)$ in the top row. But the two elements of each pair are not congruent$ \mod G_j$ unless $(1, 2^m) \in G_j$.  Thus  it is impossible for there to exist one single $\g \in \Z^d$ so that $\#(\i \in I_{m,m+1} : \i \not \equiv \g \mod G_j)$ is less than a quarter of $\#(I_{m,m+1})$ unless $(1,2^m) \in G_j$.

At the same time, we can also think of the elements in $I_{m,m+2}$ as divided into four rows, at heights $0$, $2^m$, $2\cdot 2^m$ and $3\cdot2^m$.  The elements in the middle two rows can be organized into pairs, as each $\w$ in the row at height $2^m$ can be paired with $\w + (0,2^m)$ in the row at height $2 \cdot 2^m$.  The two elements of each of these pairs are not congruent$\mod G_j$ unless $(0,2^m) \in G_j$, and so as before there cannot exist a particular $\g \in \Z^d$ so that $\#(\i \in I_{m,m+1} : \i \not \equiv \g \mod G_j)$ is less than a quarter of $\#(I_{m,m+1})$ unless $(0,2^m) \in G_j$.

We have $(1,2^m) \in G_j$ and $(0,2^m) \in G_j$.  As $G_j$ is a group, we can conclude $(1,0) \in G_j$.  This holds for every $j$, meaning $(1,0) \in \bigcap_j G_j$, so by Theorem \ref {basicodomprops} this odometer $(X_\gothG, \sigma_\gothG)$ onto which $(X,\T)$ factors cannot be free.


\vspace{.1in}

We will finish this example by showing that for $j \in \{1,2\}$, $\T^{\e_j} : \Z \actson X$ {\it{does}} factor onto the dyadic $\Z$-odometer, i.e. the action $\sigma_\gothG : \Z \actson X_\gothG$ coming from the sequence $\gothG = \{G_1, G_2, ...\}$ of subgroups of $\Z$, where $G_k = 2^k\Z$.  Note that $\T^{\e_j}$ is not rank one ($\T^{\e_1}$ is not ergodic, for example), so we cannot apply Corollary \ref{rankoneodometerfactor}.  Rather, we will establish the existence of appropriate factor maps by applying Theorem \ref{factorexists}. In other words, we will show that for every $k$, $(X, \mu, \T^{\e_j})$ factors onto the finite action $(\Z/2^k\Z, \delta_k, \tau_k)$.

\vspace{.1in}


We first consider the horizontal direction $\T^{\e_1}$.  Fix $k$; choose $N_0$ so that $4^{N_0-1} \geq k$; this ensures $ 2^{4^{N_0-1}} \geq 2^k$ (i.e. the horizontal length of tower $\mathcal{T}_{N_0}$ is larger than $2^k$) and furthermore, that $2^k$ divides $ 2^{4^{N_0-1}}$.  For notational convenience, we will set $h_0 = 2^{4^{N_0-1}}$.
For $x\in  \mathcal{T}_{N_0}$, we have $x\in L^{N_0}_\v$ for some $\v= (v_1, v_2)$.  We can then define 
$\pi(x) = v_1 + 2^k\Z$.  For all other $x\in X$, we will make use of its eventual appearance in some $\mathcal{T}_n$ and its location in
this $\mathcal{T}_n$ in relation to the tower  $\mathcal{T}_{N_0}$ in order to define $\pi(x)$.  This will be done by induction.

Suppose $x\in \mathcal{T}_{N_0+1} - \mathcal{T}_{N_0}$.  Then $x$ lies in one of three regions of new spacers in 
$\mathcal{T}_{N_0+1}$ (see Figure \ref{fig:figure1}).  
\begin{itemize}
\item The first possibility is that $x$ is in the bottom-right corner of the tower $\mathcal{T}_{N_0+1}$; in this case 
$\T^{-h_0\e_1}(x)$ lies in $\mathcal{T}_{N_0}$ and we can thus define $\pi(x) = \pi( \T^{-h_0\e_1}(x) )$.

\vspace{.1in}

\item The second possibility is that $x$ is in the top-left part of $\mathcal{T}_{N_0+1}$; in this case, $\T^{h_0\e_1}(x)$ is in the tower 
$\mathcal{T}_{N_0}$, and we can set  $\pi(x) = \pi(\T^{h_0\e_1}(x) )$.

\vspace{.1in}

\item The last possibility is when $x$ is in the top-right portion of $\mathcal{T}_{N_0+1}$, in which $ \T^{-h_0\e_1}(x)$ lies in 
$\mathcal{T}_{N_0}$ and we can define $\pi(x) = \pi(  \T^{-h_0\e_1}(x) )$.

\end{itemize}
This defines $\pi$ on all $x\in  \mathcal{T}_{N_0+1}$ and we can similarly define $\pi$ on 
$ \mathcal{T}_{N_0+2} -  \mathcal{T}_{N_0+1}$ by where the points on the new spacers lie in relationship to the tower 
$\mathcal{T}_{N_0+1}$, which in turn relies on its location in relation to the tower 
$\mathcal{T}_{N_0}$.   For instance, if $x$ is in the bottom-right corner of the tower $ \mathcal{T}_{N_0+2}$, then 
$\T^{-h_1\e_1}(x)$ lies in $\mathcal{T}_{N_0+1}$, where $h_1$ is the horizontal length of tower  $\mathcal{T}_{N_1}$.  We can thus define  $\pi(x) = \pi( \T^{-h_1\e_1}(x) )$, and similarly for the other new points in $\mathcal{T}_{N_0+2}$.
Continuing in this vein, we will eventually have $\pi$ defined on almost all of $X$.

This yields a measurable map $\pi : X \to \Z/2^k\Z$.  We need to show $\pi$ is a factor map, i.e. that $\pi(\T^{\e_1}x)= \tau_k( \pi(x) )$ where $\tau_k(x) = x+1$ mod $2^k\Z$.  This will also be done by induction.

We begin with the situation where $x, \T^{\e_1}x \in \mathcal{T}_{N_0}$, and let $\v= (v_1, v_2)$ be such that  $x\in L^{N_0}_\v$.  There are two cases: 
\begin{description}
\item[Case 1] The point $x$ does not sit on the right edge of the tower $\mathcal{T}_{N_0}$, so $v_1\neq h_0-1$.  In this case, 
$\T^{\e_1}x \in L^{N_0}_{\v+ \e_1}$ and we have
\begin{align*}
\pi(\T^{\e_1}x)  = (v_1 + 1) + 2^k\Z  & = (v_1 + 2^k\Z) + (1+2^k\Z) \\
& = \tau_k(v_1 + 2^k\Z)  = \tau_k(\pi(x)).
\end{align*}
\item[Case 2] The point $x$ does sit on the right edge of the tower $\mathcal{T}_{N_0}$, so $v_1 = h_0-1$.  In this case, 
$\T^{\e_1}x \in L^{N_0}_{\w}$ where $w_1=0$, and, using that $2^k$ divides $h_0= 2^{4^{N_0-1}}$, we thus have
\begin{align*}
\pi(\T^{\e_1}x)  = 0 + 2^k\Z  = h_0 + 2^k\Z &  = (h_0-1 + 2^k\Z) + (1+2^k\Z) \\
& = \tau_k(h_0-1 + 2^k\Z) = \tau_k(\pi(x)).
\end{align*}
\end{description}

 Next, consider the situation where $x, \T^{\e_1}x \in \mathcal{T}_{N_0+1}$.  Where these two points lie in $\mathcal{T}_{N_0+1}$ determine how $\pi$ is defined on them, so we consider the following list of possibilities:
\begin{description}
\item[Case 1]  If $x, \T^{\e_1}x \in \mathcal{T}_{N_0}$, then by prior work we know $\pi(\T^{\e_1}x) = \tau_k(\pi(x))$.
\item[Case 2] Suppose $x  \in \mathcal{T}_{N_0}$ but $\T^{\e_1}x \notin \mathcal{T}_{N_0}$. Then by construction, $\T^{\e_1}x$ must lie in the bottom or top right section of spacers indicated in Figure \ref{fig:figure1} and in fact must be in the first column of one of those sections. Then, similarly to the previous Case 2, 
\begin{align*}
\pi(\T^{\e_1}x)  = \pi( \T^{-h_0\e_1}(\T^{\e_1}x))  & = 0 + 2^k\Z   = (h_0-1 + 2^k\Z) + (1+2^k\Z) \\
& = \tau_k(h_0-1 + 2^k\Z)   = \tau_k(\pi(x)).
\end{align*}
\item[Case 3] Suppose $\T^{\e_1}x \in \mathcal{T}_{N_0}$ but $x\notin \mathcal{T}_{N_0}$. Then by construction, this means $x$ lies in the upper left portion of spacers in Figure \ref{fig:figure1} and $\T^{\e_1}x$ must lie in the first column of $ \mathcal{T}_{N_0}$.  We thus have
\begin{align*}
\pi(\T^{\e_1}x)   = 0 + 2^k\Z   & =  (h_0-1 +2^k\Z)  + (1+2^k\Z) \\
& = \tau_k(h_0-1 + 2^k\Z) = \tau_k( \pi(\T^{h_0\e_1}(x) )) = \,\,\,  \tau_k(\pi(x)).
\end{align*}
\item[Case 4] Suppose both $x, \T^{\e_1}x \in \mathcal{T}_{N_0+1} - \mathcal{T}_{N_0}$.  If they both lie in the bottom or top right sections of spacers from Figure \ref{fig:figure1}, then $\T^{-h_0\e_1}(x)$ and $\T^{-h_0\e_1}(\T^{\e_1}x)$ will maintain their relative positions and 
 thus the definition of $\pi$ will yield the result in a straightforward manner.  Less obvious is the case where $x$ lies in the 
 upper right region while $\T^{\e_1}x$ lies in the upper left region.  But because the upper right region has horizontal length 
 $h_0-1$, we then have that $\T^{-h_0\e_1}(x) \in L^{N_0}_\v$ with $v_1 =  h_0-2$.  Meanwhile, having $\T^{\e_1}x$ lie in the upper 
 left region means $\T^{h_0\e_1}(\T^{\e_1}x) \in L^{N_0}_\v$ with $v_1 = h_0-1$.  We thus get 
 \begin{align*}
\pi(\T^{\e_1}x)  = \pi( \T^{h_0\e_1}(\T^{\e_1}x) ) & =   h_0-1 +2^k\Z  \\
  & =  (h_0-2 +2^k\Z) +   (1+2^k\Z)  \\
  & = \tau_k(h_0-2 + 2^k\Z)  \\
  & = \tau_k( \pi(\T^{-h_0\e_1}(x)  )) =  \tau_k(\pi(x)).
\end{align*}
\end{description}

We now have that for all points with  $x, \T^{\e_1}x \in \mathcal{T}_{N_0+1}$,  $\pi( \T^{\e_1}x ) = \tau_k( \pi(x) )$.  Note that we did this by considering the points associated to $x$ and $\T^{\e_1}x$ by the definition of  $\pi$ (e.g. $\T^{-h_0\e_1}(x)$, etc.).  These associated points are in $\mathcal{T}_{N_0}$, allowing us to apply that we had already showed that $\pi \circ \T^{\e_1} = \tau_k \circ \pi$ holds on $\mathcal{T}_{N_0}$.  The exact same reasoning can be used for induction; if we assume that for all $x, \T^{\e_1}x \in \mathcal{T}_{m}$, $\pi( \T^{\e_1}x ) = \tau_k( \pi(x) )$, we can then show that it also holds for all $x, \T^{\e_1}x \in \mathcal{T}_{m+1}$.
Having verified that $\pi$ intertwines $\T^{\e_1}$ with $\tau$,  Theorem \ref{factorexists} yields that $\T^{\e_1}$ factors onto the dyadic odometer.

\vspace{.1in}


We finish by considering the vertical direction $\T^{\e_2}$.  Fix $k$; similar to before we seek a factor map $\pi$ from $(X,\T^{\e_2})$ to $(\Z/2^k\Z, \tau_k)$.  First recall from the construction that when a tower $\mathcal{T}_n$ is cut into $t_n$ copies and positioned as in Figure 1 to create tower $\mathcal{T}_{n+1}$, any point $x\in L^n_{(v_1,v_2)}$ is moved to a level 
$L^{n+1}_{(\widetilde{v}_1,\widetilde{v}_2)}$ where $\widetilde{v}_2$ is either $v_2$ or $v_2+2^n$.  We can then unambigously define $\pi$ on all 
$\mathcal{T}_n$, $n\ge k$, by setting $\pi(x) = v_2 + 2^k\Z$ where $x\in L^n_{(v_1,v_2)}$.

To show that $\pi$ intertwines the maps $\T^{\e_2}$ and $\tau_k$, note that for almost every $x$, there exists some $n$ such that both $x, \T^{\e_2}x \in \mathcal{T}_n$.  We then have that $x\in  L^n_{(v_1,v_2)}$ and thus $\T^{\e_2}x \in L^n_{(v_1,v_2+1)}$ .
Then $\pi(\T^{\e_2}x ) = v_2+1 + 2^k\Z = (v_2 + 2^k\Z)   +(1 + 2^k\Z) = \tau_k(v_2 + 2^k\Z ) = \tau_k(\pi(x))$, as wanted. 
By Theorem \ref{factorexists}, $(X,\T^{\e_2})$ factors onto the dyadic odometer. 
\end{proof} 

This example gives a specific $\Z^2$-action that does not factor onto a free odometer, despite the fact that each $(X,\T^{\e_j})$ {\it{does}} factor onto a free odometer, and indicates how the $\Z^2$-action carries complexity beyond what can be seen by the subactions.

\bibliographystyle{amsplain}

\end{document}